\def\ps@pprintTitle{%
		\let\@oddhead\@empty
		\let\@evenhead\@empty
		\def\@oddfoot{}%
		\let\@evenfoot\@oddfoot}
\numberwithin{equation}{section}
\newtheorem{theorem}{Theorem}[section]
\newtheorem{lemma}{Lemma}[section]
\newtheorem{proposition}{\bf Proposition}[section]
\newtheorem{remark}{Remark}[section]
\newtheorem{definition}{\hskip\parindent\bf Definition}[section]
\begin{document}

	\newtheorem{mydef}{Definition}
	\newtheorem{thm}{Theorem}
	\newtheorem{prop}{Proposition}
	\newtheorem{corol}{Corollary}
	\newtheorem{rem}{Remark}
	
	\title{Global stability and optimal control in a dengue model with fractional order transmission and recovery process}
	\author[JU]{Tahajuddin Sk}
        \author[IITK]{Kaushik Bal}
        \author[JU]{Santosh Biswas}
	\author[DAC]{Tridip Sardar \footnote{Corresponding author. Email: tridipiitk@gmail.com, Phone: +91-9674243998}}
	\address[JU]{Department of Mathematics,	Jadavpur University, Kolkata, India.}
    \address[IITK]{Department of Mathematics, Indian Institute of Technology-Kanpur, India.}
	\address[DAC]{Department of Mathematics, Dinabandhu Andrews College, Kolkata, India.}

\begin{abstract}
The current manuscript introduce a single strain dengue model developed from stochastic processes incorporating fractional order transmission and recovery. The fractional derivative has been introduced within the context of transmission and recovery process, displaying characteristics similar to tempered fractional ($TF$) derivatives. It has been established that under certain condition, a function's $TF$ derivatives are proportional to the function itself. Applying the following observation, we examined stability of several steady-state solutions, such as disease-free and endemic states, in light of this newly formulated model, using the reproduction number ($R_{0}$). In addition, the precise range of epidemiological parameters for the fractional order model was determined by calibrating weekly registered dengue incidence in the San Juan municipality of Puerto Rico, from April 9, 2010, to April 2, 2011. We performed a global sensitivity analysis method to measure the influence of key model parameters (along with the fractional-order coefficient) on total dengue cases and the basic reproduction number ($R_{0}$) using a Monte Carlo-based partial rank correlation coefficient (PRCC). Moreover, we formulated a fractional-order model with fractional control to asses the effectiveness of different interventions, such as reduction the recruitment rate of mosquito breeding, controlling adult vector, and providing individual protection. Also, we established the existence of a solution for the fractional-order optimal control problem. Finally, the numerical experiment illustrates that, policymakers should place importance on the fractional order transmission and recovery parameters that capture the underline mechanisms of disease along with reducing the spread of dengue cases, carried out through the implementation of two vector controls.

%In this ongoing article, we have introduced a model designed for a single-strain of dengue. The mention model was developed using a stochastic process that involves derivatives of non-integer order. The fractional derivatives has been incorporated into the transmission and recovery process and displays characterstic similar to tempered fractional ($TF$) derivatives.  	
\end{abstract} 
\begin{keyword}
	Fractional order dengue model; Power-law transmission process; Power-law recovery process; Global stability; Optimal control 
\end{keyword}
\maketitle

%%AMS subject classifications. 34A08, 33B20, 65R20, 45G10, 45G15 

\section{Introduction}
The viral infection dengue transmitted to human due to the bite by female Aedes aegypti mosquito carrying any one among the four viral serotypes namely DENV-1, DENV-2, DENV-3, and DENV-4~\citep{WHO-DEN09, gubler1995dengue}. An infected vector stays infected and never fully recovers from the infection because mosquitoes have a short lifespan~\citep{sardar2015mathematical}. The past few decades have seen an increase in dengue fever cases worldwide as a result of rising temperatures~\cite{ong2021adult}. According to some estimates, approximately around 390 million cases of dengue are reported each year in the world, among them 96 million experience clinical symptoms~\citep{bhatt2013global}. The dengue virus is known as acute febrile viral illness that frequently causes headaches, rash, pain in the muscles and bones, and flu-like symptoms~\citep{htun2021clinical}. However, dengue hemorrhagic fever (DHF) or dengue shock syndrome (DSS) is a potentially life threatening version of illness that can cause bleeding, vomiting, and an abrupt drop in blood pressure~\citep{WHO-DEN09, htun2021clinical}. Cases of the severe form of dengue hemorrhagic fever (DHF) rose sharply each year throughout the world~\citep{WHO-DEN09}. The most affected areas by dengue fever are Latin America, the Eastern Mediterranean, Africa, the Western Pacific region, South-East Asia, and the subtropical regions of the world~\citep{WHO-DEN09, gubler2002epidemic,pinheiro1997global}. Controlling the vector population and adopting personal protective measures are two potential intervention strategies for dengue fever, as there is currently no known treatment or vaccine~\citep{WHO-DEN09}. Therefore, in order to offer a more insightful interpretation of the mechanism of dengue transmission and effect of different interventions, a suitable mathematical model is necessary~\citep{andraud2012dynamic, aguiar2022mathematical}.

Over the past ten years, a variety of models based on statistics and mathematics have been formulated to study dengue fever, which yield valuable understanding of the spread of the disease and its management~\citep{gubler2002epidemic,andraud2012dynamic, aguiar2022mathematical, pinho2010modelling}. However, most of these models based on the assumption that the transmission and recovery processes follow an exponential distribution leading to a coupled ordinary differential equation system~\citep{gubler2002epidemic,andraud2012dynamic, aguiar2022mathematical, pinho2010modelling}. The solution generated from such a system in general follows a Markovian process~\citep{sardar2015mathematical,agarwal2013existence, podlubny1999fractional,du2013measuring, dokoumetzidis2009fractional, dokoumetzidis2010fractional}. However, disease transmission typically occurs through a non-markovian mechanism~\citep{sardar2015mathematical,saeedian2017memory, starnini2017equivalence, angstmann2016fractional,sardar2017mathematical}. As a result, a mathematical model of dengue fever developed considering the assumption of non-Markovian transmission may provide some useful information on the disease's spread and control~\citep{angstmann2016fractional, sardar2017mathematical, angstmann2017fractional}. 

A few mathematical models of infectious diseases assuming non-Markovian transmission can be found in the literature~\citep{sardar2015mathematical, angstmann2016fractional, sardar2017mathematical, angstmann2017fractional, angstmann2021general, wu2023global}. However, we could only identify two publications that develop mathematical models with non-Markovian transmission about dengue fever or vector-borne diseases~\citep{sardar2015mathematical, sardar2017mathematical}. In~\cite{sardar2015mathematical}, authors first convert a simple deterministic vector-borne disease model to a system of integral equations. Then they introduced some time-dependent integral kernels in two forces of infection. Furthermore, the authors showed that when time-dependent kernels follow some power-law form, the deterministic dengue system converts to a fractional-order system. Moreover, by studying local stability properties related to different equilibrium solutions of the fractional order model, the authors proved that conventional epidemiological results do not hold for such systems, by lowering the basic reproduction ratio ($R_{0}$) below one may not be sufficient for the eradication of dengue fever~\citep{sardar2015mathematical}. In~\citep{sardar2017mathematical}, authors follow the approach introduced by Angstmann et al.~\citep{angstmann2016fractional,angstmann2017fractional} to develop a fractional-order dengue compartmental system directly from a stochastic process. In their model~\citep{sardar2017mathematical}, fractional-order derivative appeared in the force of infection due to the assumption that the disease spread govern by power-law (non-Markovian) distribution. Furthermore, the author investigate the local stability of different equilibrium solutions and calibrate the fractional-order dengue model to some real-life data~\citep{sardar2017mathematical}.  

Some recent epidemiological findings indicate that both the transmission and recovery processes may exhibit a non-Markovian mechanism~\citep{starnini2017equivalence,lin2020non,sherborne2018mean}. However, in the context of dengue fever or any other vector-borne disease, a mathematical model with a non-Markovian transmission and recovery mechanism seems rare. Furthermore, we found only one work that mathematically analyzed the global stability of different fixed points of a $SIS$ epidemic model equipped with fractional order transmission~\citep{wu2023global}. However, in~\citep{wu2023global}, authors considered only linear fractional-order infection terms, which is different from other models developed in this direction~\citep{angstmann2016fractional,angstmann2017fractional,angstmann2021general}. Furthermore, as far as we are aware, there is not much research done on optimal control problems for these types of mathematical models, where transmission and recovery processes follow some kind of fractional-order dynamics.

The following are the three primary objective of this study:
\begin{enumerate}
	\item Development of a new mathematical model for dengue fever with fractional-order transmission and recovery processes.
	\item Provide mathematical proof of local and global dynamics of different steady-state solution to this new fractional-order dengue model.
	\item We attempt to develop and analyze an optimal intervention problem associated to this new fractional-order dengue system to analyzed the the impact of various dengue interventions.
	\item Test the model to real dengue data set and estimate key model parameters.
\end{enumerate}
The remaining parts of this work is organized in the following manner: Within Section~\ref{sec:Preliminaries}, we made available various definition and proofs related to some results that will be used throughout the manuscript. A new mathematical model of dengue incorporating fractional-order transmission and recovery has been developed in Section~\ref{sec:Model}. A few mathematical aspects such as positive invariance, and boundedness in relation to the new fractional-order dengue system are studied in Part~\ref{sec:Mathematical-properties}. In Section~\ref{Stability of equilibrium point}, investigate local and global stability involving different equilibrium state of the newly developed system including fractional-order transmission, and recovery processes. The development of an optimal control problem in connection with the fractional-order dengue system is provided in Section~\ref{Optimal Control}. In addition, within Section~\ref{estimation} an estimation of the crucial parameter relevant to our model~\ref{EQ:Fractional-order-vector-borne-disease-model} was carried out and subsequent to this, a comprehensive sensitivity analysis was undertaken in Section~\ref{sensitivity}. A numerical investigation of this new fractional-order dengue model along with the optimal control problem is provided in Section~\ref{Result}. Finally, the paper conclude with a brief discussion and summery of the key results in Section~\ref{conclusion}.

\section{Preliminary definition}\label{sec:Preliminaries}

Let's prapare the definitions, lemmas, and theorems that will form the foundation of this manuscript. 
The $\alpha$ order Riemann-Liouville (R-L) and Caputo fractional derivatives and integral of a smooth real function $\mathcal{G}$(x) (see~\citep{podlubny1999fractional}), where, $0<\alpha<1$, are formally defined as follows:
\begin{definition}~\label{Definition-0} {(R-L fractional integral)}
	The fractional-order Riemann-Liouville integral is described as:
	\begin{eqnarray}
		\displaystyle	\ _{RL}I_{t}^{\alpha} \left[\mathcal{G}(t) \right] &=&  \displaystyle\frac{1}{\Gamma(\alpha)}  \int_{0}^{t}  (t-u)^{\alpha-1} \mathcal{G}(s)ds.
		\label{EQ:R-L-integral}
	\end{eqnarray}		
\end{definition}
\begin{definition}~\label{Definition-1} {(R-L fractional derivative)}
	The Riemann-Liouville fractional-order derivative formulated as:
	\begin{eqnarray}\label{EQ:R-L-derivative}
		\displaystyle  \ _{RL}D_{t}^{\alpha} \left[\mathcal{G}(t) \right] &= &\displaystyle \frac{1}{\Gamma(1-\alpha)} \frac{d}{dt} \int_{0}^{t}  (t-u)^{-\alpha} \mathcal{G}(u) du.
	\end{eqnarray}
\end{definition}
\begin{definition}~\label{Definition-2}{(Caputo fractional derivative)} Let's consider the Caputo fractional-order derivative as follows:
	\begin{eqnarray}\label{EQ:Caputo-derivative}
		\displaystyle \ _{C}D_{t}^{\alpha} \left[\mathcal{G}(t) \right]  &= &\displaystyle \frac{1}{\Gamma(1-\alpha)}  \int_{0}^{t}  (t-u)^{-\alpha} \mathcal{G}'(u) du.
	\end{eqnarray}	
\end{definition}
Relation between the R-L and the Caputo fractional derivatives and their Laplace transformations (see~\citep{podlubny1999fractional}) are provided below:
\begin{eqnarray}\label{EQ:Relation R-L and Caputo}
	\displaystyle  \ _{RL}D_{t}^{\alpha} \left[\mathcal{G}(t) \right]  &=& \displaystyle \ _{C}D_{t}^{\alpha} \left[\mathcal{G}(t) \right] + \frac{\mathcal{G}(0) t^{-\alpha}}{\Gamma(1-\alpha)}.
\end{eqnarray}
and 
\begin{eqnarray}\label{EQ:Laplace R-L}
	\displaystyle  L\left[ \ _{RL}D_{t}^{\alpha} \left( \mathcal{G}(t)\right);s\right]  &= &\displaystyle s^{\alpha}L[\mathcal{G}(t)].
\end{eqnarray}
\begin{definition}~\label{Definition-3}
	Let the function $\gamma(t,y)$, with $y > 0$ and $t \geq 0$, be described as a lower incomplete gamma function that is explicitly expressed by:
	\begin{eqnarray}\label{EQ:Lower-Incomplete Gamma function}
		\displaystyle	\gamma (t,y):=\int_{0}^{t} u^{y -1} e^{-u} du.
	\end{eqnarray}
\end{definition}
\begin{remark}~\label{Definition-4}
	The following integral relation holds:
	\begin{eqnarray}\label{EQ:LIG With fractional derivative}
		\displaystyle	\int_{t_{1}}^{t_{2}} (t-s)^{y-1} e^{-l (t-s)} ds  &=& \displaystyle \frac{1}{l^{y}} \Bigg[\gamma\Big(l(t-t_{1}),y\Big) - \gamma\Big(l(t-t_{2}),y\Big)\Bigg]
	\end{eqnarray}where, $t_{1}, t_{2} >0$.    
\end{remark}
\begin{theorem}~\label{Theorem-1}
	Let $F \in \mathbb{C}[0, T]$, $F(t) \geq 0$, $\forall t \geq 0$ and $\beta > 0$, then we have, $e^{-\beta t}\ _{RL}D_{t}^{y}\left[e^{\beta t} F(t)\right] \approx \beta^{y} F(t)$.
\end{theorem}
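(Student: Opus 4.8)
The plan is to unfold the Riemann--Liouville operator of Definition~\ref{Definition-1} and then exploit the fact that, for $\beta$ large, the tempering kernel $e^{-\beta s}$ concentrates its mass near $s=0$, which collapses the tempered combination onto a scalar multiple of $F$. I would start by writing the left-hand side explicitly as
\[
e^{-\beta t}\ _{RL}D_{t}^{y}\left[e^{\beta t}F(t)\right]=\frac{e^{-\beta t}}{\Gamma(1-y)}\,\frac{d}{dt}\int_{0}^{t}(t-u)^{-y}e^{\beta u}F(u)\,du,
\]
and then applying the change of variable $s=t-u$ to cast the inner integral into the tempered form
\[
\int_{0}^{t}(t-u)^{-y}e^{\beta u}F(u)\,du=e^{\beta t}\int_{0}^{t}s^{-y}e^{-\beta s}F(t-s)\,ds.
\]
This rewriting is the structural heart of the argument: the weight $s^{-y}e^{-\beta s}$ is sharply peaked at $s=0$, which is exactly the mechanism that forces proportionality to $F(t)$.

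Writing $K(t)=\int_{0}^{t}s^{-y}e^{-\beta s}F(t-s)\,ds$, differentiating the product $e^{\beta t}K(t)$ and cancelling the exponential gives the clean decomposition $e^{-\beta t}\ _{RL}D_{t}^{y}[e^{\beta t}F]=\frac{1}{\Gamma(1-y)}\big(\beta K(t)+K'(t)\big)$. I would then extract the leading-order term by splitting $F(t-s)=F(t)+[F(t-s)-F(t)]$ inside $\beta K(t)$ and evaluating the principal integral through the lower incomplete gamma function supplied by Definition~\ref{Definition-3} and Remark~\ref{Definition-4}: the scaling $w=\beta s$ yields $\int_{0}^{t}s^{-y}e^{-\beta s}\,ds=\beta^{y-1}\gamma(\beta t,1-y)$, which tends to $\beta^{y-1}\Gamma(1-y)$ as $\beta t\to\infty$ (using $0<y<1$). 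Hence the dominant piece of $\beta K(t)$ is $\beta^{y}\Gamma(1-y)F(t)$, and dividing by $\Gamma(1-y)$ recovers $\beta^{y}F(t)$.

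The main obstacle is making the collapse $F(t-s)\approx F(t)$ quantitative and verifying that every discarded piece is genuinely subdominant in $\beta$. Concretely one must bound the error $\beta\int_{0}^{t}s^{-y}e^{-\beta s}[F(t-s)-F(t)]\,ds$ by using uniform continuity of $F$ on $[0,T]$ over a short interval $[0,\delta]$ together with the exponential smallness of the tail on $[\delta,t]$, so that its ratio to $\beta^{y}$ vanishes; one must also check that $K'(t)$ and the boundary contribution of order $e^{-\beta t}(\beta t)^{-y}$ are of order no larger than $\beta^{y-1}$, hence negligible against $\beta^{y}$. Once these estimates are assembled, the identity holds in the asymptotic regime $\beta t\gg1$, which is the sense intended by $\approx$. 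As an independent check on the leading constant I would apply the Laplace transform: combining the shift rule with the R--L transform~\eqref{EQ:Laplace R-L} gives $L\big[e^{-\beta t}\ _{RL}D_{t}^{y}(e^{\beta t}F)\big](s)=(s+\beta)^{y}\widehat{F}(s)$, and the expansion $(s+\beta)^{y}=\beta^{y}(1+s/\beta)^{y}\approx\beta^{y}$ for $\beta\gg s$ reproduces $\beta^{y}F(t)$, confirming the direct computation.
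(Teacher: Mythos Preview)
Your approach is correct at the heuristic level the paper targets, but it is genuinely different from the paper's proof. The paper does not unfold the derivative directly; instead it starts from the tempered \emph{integral} $e^{-\beta t}\ _{RL}I_t^y[e^{\beta t}F]$, applies the integral mean value theorem to pull out $F(\xi)$ against the explicit kernel integral $\gamma(\beta t,y)/\beta^y$, then inverts via $_{RL}D_t^y$ and applies the mean value theorem a second time. This produces not just the asymptotic $\approx$ but an \emph{exact} identity (equation~\eqref{EQ:Final-results}) expressing $\beta^y F(t)$ as the multiple $\gamma(\beta\xi_1,y)/\Gamma(y)$ of the tempered derivative plus an explicit remainder involving $e^{-\beta\xi_1}(\beta\xi_1)^{y-1}$ and $\gamma(\beta t,1-y)$. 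That identity, with its named intermediate points $\xi_1,\xi_2$, is reused verbatim downstream (see~\eqref{EQ:Relation-fractional_derivative-2} and~\eqref{EQ:Epsilon_Relation}) to manufacture the $\epsilon$-inequalities that drive the local and global stability proofs, so the paper's route buys a concrete error structure your decomposition $\beta K(t)+K'(t)$ does not supply in the same form.

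Your argument, by contrast, is more transparent about \emph{why} the collapse happens (concentration of $s^{-y}e^{-\beta s}$ near $s=0$) and comes with the Laplace-transform consistency check $(s+\beta)^y\widehat F(s)\approx\beta^y\widehat F(s)$, which the paper does not mention. One point to tighten: your opening framing ``for $\beta$ large'' and the estimate $K'(t)=O(\beta^{y-1})$ are native to a large-$\beta$ regime, whereas the paper applies the result with $\beta=\mu_V,\mu_H$ fixed and $t\to\infty$. Your later phrasing $\beta t\gg1$ is the correct one; but for fixed $\beta$ the remainder $\beta\int_0^t s^{-y}e^{-\beta s}[F(t-s)-F(t)]\,ds$ and the $K'(t)$ contribution are controlled only when $F$ is slowly varying (as near an equilibrium), which is precisely how the theorem is used later. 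You should state that dependence explicitly rather than leaning on uniform continuity on $[0,T]$ alone.
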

\begin{proof} Note that,
	\begin{equation}
		e^{{-\beta} t}\ _{RL}I_{t}^{y} \left[e^{\beta t}F(t) \right] = \frac{1}{\Gamma(y)} \int_{0}^{t}  (t-s)^{y-1} e^{-\beta\left(t-s\right)} F(s) ds
	\end{equation} 
Given that $F \in \mathbb{C}[0, t]$, and $g(t) = t^{y-1} e^{-\beta t}$ is an integrable function which does not change sign in $[0, t]$, applying the Mean Value Theorem of definite integrals as a result
	\begin{eqnarray}
		\begin{array}{llll}
			\displaystyle	e^{-\beta t}\ _{RL}I_{t}^{y} \left[e^{\beta t}F(t) \right] &=& \displaystyle \frac{1}{\Gamma(y)} F(\xi) \int_{0}^{t}  (t-s)^{y-1} e^{-\beta\left(t-s\right)} ds,~~\text{where}~{} \xi \in(0,t) \\ &=& \displaystyle \frac{1}{\Gamma(y)} F(\xi) \frac{\gamma (\beta t,y)}{\beta^{y}},~\hspace{1cm}[\text{using Remark}~ \eqref{Definition-4}],\\ \displaystyle \implies \ _{RL}I_{t}^{y} \left[e^{\beta t}F(t) \right] &=&\displaystyle \frac{e^{\beta t}}{\Gamma(y)} F(\xi) \frac{\gamma (\beta t,y)}{\beta^{y}}, \\ \displaystyle \implies e^{\beta t}F(t) &=& \displaystyle\ _{RL}D_{t}^{y}\left[\frac{e^{\beta t}}{\Gamma(y)} F(\xi) \frac{\gamma (\beta t,y)}{\beta^{y}}\right],\\ \displaystyle \implies \beta^{y} F(t) &=&\displaystyle \frac{e^{-\beta t}}{\Gamma(y)} \ _{RL}D_{t}^{y}\left[ e^{\beta t}F(\xi) \gamma (\beta t,y)\right],\\ &=&\displaystyle \frac{e^{-\beta t}}{\Gamma(y)} \frac{1}{\Gamma(1-y)} \frac{d}{dt} \int_{0}^{t}  (t-s)^{-y} e^{\beta s}F(s) \gamma (\beta s,y) ds.
		\end{array}
	\end{eqnarray}

	Again, the function $\gamma (\beta t,y)$ is a continuous and since, $F(t) \geq 0$, $\forall t \geq 0$, then, $(t-s)^{-y} e^{\beta s} F(s)$, where, $0 < s < t$, is integrable and does not changes sign in $[0, t]$. Consequently, by utilizing the Mean Value Theorem of definite integrals yields,
	\begin{eqnarray}~\label{EQ:Final-results}
		\begin{array}{llll}
			\displaystyle	\beta^{y} F(t) &=&\displaystyle \frac{e^{-\beta t}}{\Gamma(y)} \frac{1}{\Gamma(1-y)} \frac{d}{dt}\left[\gamma (\beta \xi_{1},y) \int_{0}^{t}  (t-s)^{-y} e^{\beta s}F(s) ds\right],~~ \text{where}~~{} \xi_{1} \in(0,t),\\ &=&\displaystyle\frac{\gamma (\beta \xi_{1},y)}{\Gamma(y)} e^{-\beta t}\ _{RL}D_{t}^{y} \left[e^{\beta t}F(t) \right]+ \frac{e^{-\beta \xi_{1} } \left(\beta \xi_{1}\right)^{y-1} \gamma (\beta t,1-y)}{\beta^{y}\Gamma(y)\Gamma(1-y)} F(\xi_{2})~\hspace{0.1cm}\text{where,}~{} \xi_{1}, \xi_{2} \in(0,t).
		\end{array}
	\end{eqnarray}
	
	Now, $\frac{\gamma (\beta t,1-y)}{\Gamma(1-y)} \to 1$ as $t \to \infty$ and $e^{-\beta \xi_{1} } \left(\beta \xi_{1}\right)^{y-1} \to 0$ as $t \to \infty$. Hence~\eqref{EQ:Final-results}
	can be written as
	
	\begin{eqnarray}
		\begin{array}{llll}
			\displaystyle	e^{-\beta t}\ _{RL}D_{t}^{y} \left[e^{\beta t}F(t) \right]&\approx & \displaystyle \beta^{y} F(t)
		\end{array}
	\end{eqnarray}
	\vspace{0.1cm}
\end{proof}

\section{{Model formulation}}\label{sec:Model}
The following assumption are made to develop a fractional-order model for vector-borne disease, involving transmission and recovery process:

\begin{enumerate}
	\item [$\bullet$] Total human population $N_{H}$ is constant.
	\item [$\bullet$] Average biting rate is $\frac{b}{N_{H}}$.
	\item [$\bullet$] $\displaystyle \rho_{V}(t-s)$ is the intrinsic infection age in an infected vector. This expression depending on the time since infection $t-s$ as well as present time $t$.
	\item [$\bullet$] $\displaystyle \rho_{H}(t-s)$ is the intrinsic infection age in an infected human which depend on current time $t$, and time since infection occurred $t-s$.
	\item [$\bullet$] $\beta_{V}^{H}$ is probability of transmission from vector to human.
	\item [$\bullet$] $\beta_{H}^{V}$ is the rate of transmission probability from human to vector is .
	\item [$\bullet$] The human population is partitioned among three mutually exclusive categories: susceptible human ($S_{H}$), infected human ($I_{H}$), and recovered human ($R_{H}$).
	\item [$\bullet$] $(N_{V})$ the vector populations is subdivided in two mutually exclusive groups: susceptible vector ($S_{V}$), and infected vector ($I_{V}$).
	\item [$\bullet$] Considering short life span of the vector, we assumed no recovery for those who have been infected.
\end{enumerate}

Now, the average incidence of newly infected humans generated within the time domain $[t, t+\Delta t]$ by a single vector infected at earlier in time $s < t$ is defined as
\begin{eqnarray}\label{EQ:Average-infected-human}
	\begin{array}{llll}
		\displaystyle \frac{b}{N_{H}} \beta_{V}^{H} \rho_{V}(t-s) S_{H}(t) \Delta t + o(\Delta t).
	\end{array}
\end{eqnarray}
Following~\cite{sardar2017mathematical}, we consider that the infection first occurred at $t = 0$. Furthermore, the survival probability $\Phi_{I_{V}}(t,s)$ of a vector individual once infected at time $t$, provided it enters this compartment during the time $s < t$. Therefore, at $t$, the average number of new infections is:

\begin{eqnarray}\label{EQ:Newflux-infected-human}
	\begin{array}{llll}
		\displaystyle Q(I_{H},t) &=& \displaystyle\frac{b}{N_{H}} \beta_{V}^{H} S_{H}(t) \int_{0}^{t} \rho_{V}(t-s) \Phi_{I_{V}}(t,s) Q(I_{V},s) ds,
	\end{array}
\end{eqnarray}
where, average number of infected vector population is $Q(I_{V},t)$ at time $t$. 

An infected vector may only transition from its respective compartment through natural death. We assume that the death process of vectors follows an exponential distribution. Therefore,

\begin{eqnarray}\label{EQ:death-process-vector}
	\begin{array}{llll}
		\displaystyle \Phi_{I_{V}}(t,s) &= &\displaystyle e^{-\int_{s}^{t} \mu_{v} ds} = \displaystyle e^{- \mu_{v} (t-s)},
	\end{array}
\end{eqnarray}
where, the natural mortality rate of the vectors is $\mu_{v}$. 

The number of individual with infections at $t$ is given by

\begin{eqnarray}\label{EQ:Infected-human-at-t}
	\begin{array}{llll}
		\displaystyle I_{H}(t) &=& \displaystyle \int_{0}^{t} \Phi_{I_{H}}(t,s) Q(I_{H},s) ds,
	\end{array}
\end{eqnarray}
where, the survival probability of an infected individual who moved into the specified state at time $s$ is $\Phi_{I_{H}}(t,s)$, and will continue to stay in this compartment until time $t$ $(s < t)$. Furthermore, $Q(I_{H},t)$ is provided in equation~\eqref{EQ:Newflux-infected-human}.

From the hypothesis, an individua can move the infected human compartment in two ways, either become achieve recovery from the infection or by natural death. We assume these two processes are independent. Thus probability of survival of an individual in the infected human state is provided below:

\begin{eqnarray}\label{EQ:Survival-probablity-infected-human}
	\begin{array}{llll}
		\displaystyle \Phi_{I_{H}}(t,s) &= &\displaystyle \phi_{h}^{R}(t-s) \theta_{h}^{D}(t,s),
	\end{array}
\end{eqnarray}
where, $\phi_{h}^{R}(t-s)$, and $\theta_{h}^{D}(t,s)$ are the probabilities that an entity has not recovered and deceased at time $t$, respectively, while those entered in this compartment ($I_{H}$) at time $s < t$. Furthermore, it is assume that the death process of humans is governed by the exponential distribution provided below:

\begin{eqnarray}\label{EQ:Death-survivalprobability-infectedhuman}
	\begin{array}{llll}
		\displaystyle \theta_{h}^{D}(t,s) &=& \displaystyle e^{-\int_{s}^{t} \mu_{H} ds} = \displaystyle e^{- \mu_{H} (t-s)},
	\end{array}
\end{eqnarray}
where, $\mu_{H}$ represent the natural mortality rate of the human populations.   

Therefore, using~\eqref{EQ:Survival-probablity-infected-human}, we have from~\eqref{EQ:Infected-human-at-t}:

\begin{eqnarray}\label{EQ:Infected-human-compartment-1}
	\begin{array}{llll}
		\displaystyle \frac{dI_{H}}{dt} &= &\displaystyle Q(I_{H},t) - \int_{0}^{t} \Psi(t-s) \theta_{h}^{D}(t,s) Q(I_{H},s) ds -\mu_{H} I_{H},
	\end{array}
\end{eqnarray}
where, $\displaystyle \Psi(t) =\displaystyle - \frac{d \phi_{h}^{R}(t)}{dt}$.
Hence from ~\eqref{EQ:Newflux-infected-human} we have:

\begin{eqnarray}\label{EQ:Infected-human-compartment-2}
	\begin{array}{llll}
		\displaystyle \frac{dI_{H}}{dt} &= &\displaystyle \frac{b}{N_{H}} \beta_{V}^{H} S_{H}(t) \int_{0}^{t} \rho_{V}(t-s) \theta_{i_{v}}^{D}(t,s) Q(I_{V}, s) d\tau \\ &-& \displaystyle\int_{0}^{t} \Psi(t-s) \theta_{h}^{D}(t,s) Q(I_{H},s) ds -\mu_{H} I_{H},
	\end{array}
\end{eqnarray}
where, $\theta_{i_{v}}^{D}(t,s) = e^{-\mu_{v} (t-s)}$.

Similarly, the average incidence of newly infected vector at time $t$ is:  

\begin{eqnarray}\label{EQ:Newflux-infected-vector}
	\displaystyle Q(I_{V},t) &= &\displaystyle \frac{b}{N_{H}} \beta_{H}^{V} S_{V}(t) \int_{0}^{t} \rho_{H}(t-s) \Phi_{I_{H}}(t,s) Q(I_{H},s) ds,
\end{eqnarray}
where, $Q(I_{H},t)$ is defined in equation~\eqref{EQ:Newflux-infected-human}. Using~\eqref{EQ:Survival-probablity-infected-human}, we have:

\begin{eqnarray}\label{EQ:Newflux-infected-vector-1}
	\displaystyle Q(I_{V},t) &=& \displaystyle\frac{b}{N_{H}} \beta_{H}^{V} S_{V}(t) \int_{0}^{t} \rho_{H}(t-s) \phi_{h}^{R}(t-s) \theta_{h}^{D}(t, s) Q(I_{H},s) ds.
\end{eqnarray}

Therefore, the infected number of vectors at $t$ time is as follows:

\begin{eqnarray}\label{EQ:Infected-vector-at-t}
	\displaystyle I_{V}(t) &= &\displaystyle \int_{0}^{t} \Phi_{I_{V}}(t,s) Q(I_{V},s) ds.
\end{eqnarray}
Taking derivative in equation~\eqref{EQ:Infected-vector-at-t}, and using the relation in~\eqref{EQ:death-process-vector} we have:
\begin{eqnarray}\label{EQ:Infected-vector-compartment-1}
	\displaystyle \frac{dI_{V}}{dt} &=& \displaystyle Q(I_{V},t) -\mu_{V} I_{V}.
\end{eqnarray} Using relation~\eqref{EQ:Newflux-infected-vector-1}, we have:

\begin{eqnarray}\label{EQ:Infected-vector-compartment-2}
	\displaystyle \frac{dI_{V}}{dt}&=&\displaystyle \frac{b}{N_{H}} \beta_{H}^{V} S_{V}(t) \int_{0}^{t} \rho_{H}(t-s) \phi_{h}^{R}(t-s) \theta_{h}^{D}(t,s) Q(I_{H}, s) -\mu_{V} I_{V}.
\end{eqnarray}

Now, both $\theta_{h}^{D}(t,s)$, and $\theta_{i_{v}}^{D}(t,s)$ satisfy the semi-group property \textit{i.e.}
\begin{eqnarray}\label{EQ:Semi-group-property}
	\begin{array}{llll}
		\displaystyle \theta_{h}^{D}(t,s)& = &\displaystyle \theta_{h}^{D}(t,u) \theta_{h}^{D}(u,s),\\ \displaystyle \theta_{i_{v}}^{D}(t,s)& = &\displaystyle \theta_{i_{v}}^{D}(t,u) \theta_{i_{v}}^{D}(u,s),\hspace{0.4cm} \text{where} \hspace{0.4cm} s < u < t.
	\end{array}
\end{eqnarray} Using semi-group property the systems~\eqref{EQ:Infected-human-compartment-2} and~\eqref{EQ:Infected-vector-compartment-2} become:

\begin{eqnarray}
	\begin{array}{llll}
		\displaystyle \frac{d I_{H}}{dt} &=&\displaystyle \frac{b}{N_{H}} \beta_{V}^{H} S_{H}(t) \theta_{i_{v}}^{D}(t,0) \int_{0}^{t} \rho_{V}(t-s) \frac{Q(I_{V}, s)}{\theta_{i_{v}}^{D}(s,0)} ds\\ &-&\displaystyle \theta_{h}^{D}(t,0) \int_{0}^{t} \Psi(t-s) \frac{Q(I_{H}, s)}{\theta_{h}^{D}(s,0)} ds -\mu_{H} I_{H}
	\end{array}
	\label{EQ:Infected-human-compartment-3}
\end{eqnarray} and

\begin{eqnarray}
	\begin{array}{llll}
		\displaystyle \frac{d I_{V}}{dt} &=& \displaystyle \frac{b}{N_{H}} \beta_{H}^{V} S_{V}(t) \theta_{h}^{D}(t,0) \int_{0}^{t} \rho_{H}(t-s) \phi_{h}^{R}(t-s) \frac{Q(I_{H}, s)}{\theta_{h}^{D}(s,0)} ds -\mu_{V} I_{V}.
	\end{array}
	\label{EQ:Infected-vector-compartment-3}
\end{eqnarray}
Now again using semi-group property~\eqref{EQ:Semi-group-property}, we have from~\eqref{EQ:Infected-vector-at-t}:

\begin{eqnarray}
	\begin{array}{llll}
		\displaystyle \frac{I_{V}(t)}{\theta_{i_{v}}^{D}(t,0)} &=& \displaystyle \int_{0}^{t} \frac{Q(I_{V},s)}{\theta_{i_{v}}^{D}(s,0)} ds.
	\end{array}
	\label{EQ:Infected-vector-at-time-t1}
\end{eqnarray}
Taking Laplace transform w.r.t. $t$ in the both side of~\eqref{EQ:Infected-vector-at-time-t1}, we have:
\begin{eqnarray}~\label{EQ:Infected-vector-at-time-t2}
	\displaystyle L\left[\frac{I_{V}(t)}{\theta_{i_{v}}^{D}(t,0)}\right] &= &\displaystyle \frac{1}{q} L\left[\frac{Q(I_{V}, t)}{\theta_{i_{v}}^{D}(t,0)}\right].
\end{eqnarray}
As a result of equation~\eqref{EQ:Infected-human-compartment-3}, the first integrand on the right-hand side can be converted as:

\begin{eqnarray}\label{EQ:Integral-in-Infected-human-compartment-3}
	\begin{array}{llll}
		\displaystyle L\left[\int_{0}^{t} \rho_{V}(t-s) \frac{Q(I_{V}, s)}{\theta_{i_{v}}^{D}(s,0)} \right] &= &\displaystyle L\left[ \kappa_{V}(t)\right] L\left[\frac{I_{V}(t)}{\theta_{i_{v}}^{D}(t,0)}\right],\\ &=&\displaystyle \int_{0}^{t} \kappa_{V}(t-s) \frac{I_{V}(s)}{\theta_{i_{v}}^{D}(s,0)} ds,
	\end{array}
\end{eqnarray}
where, 

\begin{eqnarray}\label{EQ:kappaV}
	\displaystyle \kappa_{V}(t) &= &\displaystyle L^{-1}\left\{q L\left[\rho_{V}(t)\right]  \right\}. 	
\end{eqnarray}
Again from~\eqref{EQ:Infected-human-at-t}, we have:

\begin{eqnarray}\label{EQ:Infected-human-at-time-t2}
	\displaystyle L\left[\frac{I_{H}(t)}{\theta_{h}^{D}(t,0)}\right] &=& \displaystyle L\left[\phi_{h}^{R}(t)\right] L\left[\frac{Q(I_{H}, t)}{\theta_{h}^{D}(t,0)}\right].
\end{eqnarray}
Therefore, the right-hand side of the second integrand in the equation~\eqref{EQ:Infected-human-compartment-3}, can be transformed as:
\begin{eqnarray}\label{EQ:Second-Integral-in-Infected-human-compartment-3}
	\begin{array}{llll}
		\displaystyle L\left[\int_{0}^{t} \Psi(t-s) \frac{Q(I_{H}, s)}{\theta_{h}^{D}(s,0)} \right] &=& \displaystyle L\left[ \kappa_{R}(t)\right] L\left[\frac{I_{H}(t)}{\theta_{h}^{D}(t,0)}\right],\\ &=& \displaystyle \int_{0}^{t} \kappa_{R}(t-s) \frac{I_{H}(s)}{\theta_{h}^{D}(s,0)} ds,
	\end{array}	
\end{eqnarray}
where,

\begin{eqnarray}\label{EQ:kappaR}
	\displaystyle \kappa_{R}(t) &= &\displaystyle L^{-1}\left\{\frac{L\left[\Psi(t)\right]}{L\left[\phi_{h}^{R}(t)\right]}  \right\}. 	
\end{eqnarray}
Therefore, using~\eqref{EQ:Integral-in-Infected-human-compartment-3}, and~\eqref{EQ:Second-Integral-in-Infected-human-compartment-3}, we have the general integro-differential equation of the infected human compartment:

\begin{eqnarray}\label{EQ:Infected-human-compartment-4}
	\begin{array}{llll}
		\displaystyle \frac{d I_{H}}{dt} &=& \displaystyle \frac{b}{N_{H}} \beta_{V}^{H} S_{H}(t) e^{-\mu_{V} t} \int_{0}^{t} \kappa_{V}(t-s) I_{V}(s) e^{\mu_{V} s} ds\\
		&-&  \displaystyle e^{-\mu_{H} t} \int_{0}^{t} \kappa_{R}(t-s) I_{H}(s) e^{\mu_{H} s} ds -\mu_{H} I_{H}.
	\end{array}	
\end{eqnarray}

Proceeding similarly, we have the general integro-differential equation of the infected vector compartment provided below:
\begin{eqnarray}\label{EQ:Infected-vector-compartment-4}
	\displaystyle \frac{d I_{V}}{dt} &=& \displaystyle \frac{b}{N_{H}} \beta_{H}^{V} S_{V}(t) e^{-\mu_{H} t} \int_{0}^{t} \kappa_{H}(t-s) I_{H}(s) e^{\mu_{H} s} ds -\mu_{V} I_{V},
\end{eqnarray}where 

\begin{eqnarray}\label{EQ:kappah}
	\displaystyle \kappa_{H}(t) &=& \displaystyle L^{-1}\left\{\frac{L\left[ \rho_{H}(t) \phi_{h}^{R}(t)\right]}{L\left[\phi_{h}^{R}(t)\right]} \right\},
\end{eqnarray}

Therefore, a general integro-differential equation representing the interaction between humans and vectors is provided below:
\begin{eqnarray}\label{EQ:inegro-differential-vector-borne}
	\begin{array}{llll}
		\displaystyle \frac{dS_{H}}{dt} &=&\displaystyle \mu_{H} N_{H} - \frac{b}{N_{H}} \beta_{V}^{H} S_{H}(t) e^{-\mu_{V} t} \int_{0}^{t} \kappa_{V}(t-s) I_{V}(s) e^{\mu_{V} s} ds -\mu_{H} S_{H},\\
		\displaystyle \frac{dI_{H}}{dt}&=& \displaystyle \frac{b}{N_{H}} \beta_{V}^{H} S_{H}(t) e^{-\mu_{V} t} \int_{0}^{t} \kappa_{V}(t-s) I_{V}(s) e^{\mu_{V} s} ds- e^{-\mu_{H} t} \int_{0}^{t} \kappa_{R}(t-s) I_{H}(s) e^{\mu_{H} s} ds\\ & -& \displaystyle  \mu_{H} I_{H},\\
		\displaystyle \frac{dR_{H}}{dt}&=& \displaystyle e^{-\mu_{H} t} \int_{0}^{t} \kappa_{R}(t-s) I_{H}(s) e^{\mu_{H} s} ds - \mu_{H} R_{H},\\ 	\displaystyle \frac{dS_{V}}{dt}&= &\displaystyle \Pi_{V}- \frac{b}{N_{H}} \beta_{H}^{V} S_{V}(t) e^{-\mu_{H} t} \int_{0}^{t} \kappa_{H}(t-s) I_{H}(s) e^{\mu_{H} s} ds -\mu_{V} S_{V},\\
		\displaystyle  \frac{dI_{V}}{dt}&= &\displaystyle \frac{b}{N_{H}} \beta_{H}^{V} S_{V}(t) e^{-\mu_{H} t} \int_{0}^{t} \kappa_{H}(t-s) I_{H}(s) e^{\mu_{H} s} ds -\mu_{V} I_{V},
	\end{array}
\end{eqnarray}
where, $\kappa_{V}(t)$, $\kappa_{R}(t)$, and $\kappa_{H}(t)$ are provided in equations~\eqref{EQ:kappaV},~\eqref{EQ:kappaR}, and~\eqref{EQ:kappah}, respectively.  

Following~\cite{ angstmann2016fractional,sardar2017mathematical, angstmann2016fractionalA}, the fractional derivatives can be incorporated in the model~\eqref{EQ:inegro-differential-vector-borne} by choosing the intrinsic infection age of the vector function $\rho_{V}(t)$ in power-law form as follows: 

\begin{eqnarray}\label{EQ:power-law-intrinsic-infection-age-vector}
	\displaystyle \rho_{V}(t) &=& \displaystyle \frac{t^{\alpha-1}}{\Gamma(\alpha)},\hspace{0.4cm} 0<\alpha \leq 1.
\end{eqnarray}

Using~\eqref{EQ:power-law-intrinsic-infection-age-vector}, and following~\cite{angstmann2016fractional,sardar2017mathematical,angstmann2016fractionalA} we have:

\begin{eqnarray}\label{EQ:first-integral-susceptible-human}
	\displaystyle \int_{0}^{t} \kappa_{V}(t-s) I_{V}(s) e^{\mu_{V} s} ds &= &\displaystyle _{RL}{D_{t}}^{1-\alpha}\left[I_{V}(t) e^{\mu_{V} t} \right],
\end{eqnarray}

Furthermore, following~\cite{ angstmann2017fractional,hilfer1995fractional}, $\Psi(t)$ the probability density function can be considered in Mittag-Leffler form as follows:

\begin{equation}\label{EQ:psit-Mittag-leffler}
	\displaystyle \Psi(t) = \displaystyle \frac{t^{\beta-1}}{C^{\beta}} E_{\beta,\beta} \left(-\left(\frac{t}{C}\right)^{\beta} \right),\hspace{0.4cm} 0< \beta \leq 1,	
\end{equation}
where, the two parameter Mittag-Leffler function is $\displaystyle E_{r, l} = \sum_{k=0}^{\infty} \frac{z^{k}}{\Gamma(r k + l)}$, and $C$ is a scaling parameter. Density function for sufficiently large $t$ provided in~\eqref{EQ:psit-Mittag-leffler} has a power-law tail with $\displaystyle \Psi(t) \sim  t^{-1-\beta}$. From~\eqref{EQ:Infected-human-compartment-1}, the survival function $\phi_{h}^{R}(t)$ become:

\begin{equation}\label{EQ:Survival-probablity}
	\displaystyle \phi_{h}^{R}(t) = \displaystyle E_{\beta,1} \left(-\left(\frac{t}{C}\right)^{\beta} \right),\hspace{0.4cm} 0< \beta \leq 1. 
\end{equation} 
Following~~\cite{angstmann2017fractional}, we have:

\begin{eqnarray}\label{EQ:kappaR-power-law}
	\begin{array}{llll}
		\displaystyle \kappa_{R}(t) &=& \displaystyle L^{-1} \left[\frac{L\left\{ \Psi(t)\right\}}{L\left\{\phi_{h}^{R}(t)\right\}}\right]\\ &=&\displaystyle q^{1-\beta} C^{-\beta}
	\end{array}	
\end{eqnarray} Using~\eqref{EQ:kappaR-power-law}, we have:

\begin{eqnarray}\label{EQ:second-integral-infected-human}
	\displaystyle \int_{0}^{t} \kappa_{R}(t-s) I_{H}(\tau) e^{\mu_{H} s} ds &=& \displaystyle \frac{_{RL}D_{t}^{1-\beta}\left[I_{H}(t) e^{\mu_{H} t} \right]}{C^{\beta}}.
\end{eqnarray}Following~~\cite{ angstmann2016fractional,angstmann2017fractional}, we choose the intrinsic infection age of an infected human as follows:

\begin{eqnarray}\label{EQ:power-law-intrinsic-infection-age-human}
	\begin{array}{llll}
		\displaystyle \rho_{H}(t) &=& \displaystyle \frac{t^{p-1}}{\phi_{h}^{R}(t) C^{p}} E_{\beta, p}\left(-\left(\frac{t}{C}\right)^{\beta} \right),\hspace{0.4cm} 0<\beta, p \leq 1.
	\end{array}	
\end{eqnarray}

As the age of infection must be a non-negative function, therefore, the condition for $\rho_{H} (t) \geq 0$ is the fractional coefficients $\beta$, and $p$ must satisfy $0 < \beta \leq p \leq 1$. Following~~\cite{angstmann2017fractional}, we have:

\begin{eqnarray}\label{EQ:kappah-power-law}
	\displaystyle \kappa_{H}(t) &= &\displaystyle q^{1-p} C^{-p}.  	
\end{eqnarray}
 Using~\eqref{EQ:kappah-power-law}, we have:
\begin{eqnarray}\label{EQ:third-integral-infected-vector}
	\displaystyle \int_{0}^{t} \kappa_{H}(t-s) I_{H}(s) e^{\mu_{H} s} ds &=& \displaystyle \frac{_{RL}D_{t}^{1-p}\left[I_{H}(t) e^{\mu_{H} t} \right]}{C^{p}},\hspace{0.4cm} 0 <\beta \leq p \leq 1.
\end{eqnarray}

Therefore, we obtain the vector-borne disease model equipped with fractional-order transmission and recovery process after substituting~\eqref{EQ:first-integral-susceptible-human},~\eqref{EQ:second-integral-infected-human}, and~\eqref{EQ:third-integral-infected-vector} in system~\eqref{EQ:inegro-differential-vector-borne}:

\begin{eqnarray}\label{EQ:Fractional-order-vector-borne-disease-model}
	\begin{array}{llll}
		\displaystyle \frac{dS_{H}}{dt} &=& \displaystyle \mu_{H} N_{H} - \frac{b^{\alpha}}{N_{H}} \beta_{V}^{H} S_{H}(t) e^{-\mu_{V} t}~ _{RL}{D_{t}}^{1-\alpha}\left[I_{V}(t) e^{\mu_{V} t} \right] -\mu_{H} S_{H},\\
		\displaystyle  \frac{dI_{H}}{dt}&=& \displaystyle \frac{b^{\alpha}}{N_{H}} \beta_{V}^{H} S_{H}(t) e^{-\mu_{V} t}~ _{RL}{D_{t}}^{1-\alpha}\left[I_{V}(t) e^{\mu_{V} t} \right]- \frac{e^{-\mu_{H} t}}{C^{\beta}}~_{RL}D_{t}^{1-\beta}\left[I_{H}(t) e^{\mu_{H} t} \right] -\mu_{H} I_{H},\\
		\displaystyle \frac{dR_{H}}{dt}&= &\displaystyle \frac{e^{-\mu_{H} t}}{C^{\beta}}~_{RL}D_{t}^{1-\beta}\left[I_{H}(t) e^{\mu_{H} t} \right] - \mu_{H} R_{H},\\
		\displaystyle  \frac{dS_{V}}{dt}&=& \displaystyle \Pi_{V}- \frac{b^{p}}{N_{H} C^{p}} \beta_{H}^{V} S_{V}(t) e^{-\mu_{H} t}~_{RL}D_{t}^{1-p}\left[I_{H}(t) e^{\mu_{H} t}\right]  -\mu_{V} S_{V},\\
		\displaystyle  \frac{dI_{V}}{dt}&= &\displaystyle \frac{b^{p}}{N_{H} C^{p}} \beta_{H}^{V} S_{V}(t) e^{-\mu_{H} t} ~_{RL}D_{t}^{1-p}\left[I_{H}(t) e^{\mu_{H} t}\right] -\mu_{V} I_{V},
	\end{array}
\end{eqnarray} 
where, $0 < \alpha \leq 1$, and $0 < \beta \leq p \leq 1$, $S_{H}(0)=S_{0}^{H}\geq 0$, $I_{H}(0)=I_{0}^{H}\geq 0$, $R_{H}(0)=R_{0}^{H}\geq 0$, $S_{V}(0)=S_{0}^{V}\geq 0$, and $I_{V}(0)=I_{0}^{V}\geq 0$ and $\Pi_{V}=\delta \times N_{H}$. Furthermore, we assumed that all model~\eqref{EQ:Fractional-order-vector-borne-disease-model} parameters are positive.

\begin{table}[H]
	\tabcolsep 2 pt
	\centering
	\caption{\bf{ In the context of the Dengue model~\eqref{EQ:Fractional-order-vector-borne-disease-model}, all the parameters are positive. The ranges and biological significance of these parameters are provided }}
	\begin{tabular}{p{2cm} p{8cm} p{2cm} p{2cm}}
		\footnotesize{{Parameter}} &\footnotesize{{Epidemiological meaning}} & \footnotesize{{Value}} &\footnotesize{{Reference}}\\  \hline\\
		\footnotesize{$N_{H}$} & \footnotesize{Total Human Population}  &
		\footnotesize{$\textup{2347833}$\; $\textup{}$ } & \footnotesize{\cite{Puerto}} \\\\
		\footnotesize{$1/\mu_{H}$}  & \footnotesize{The mean life expectancy of individuals at birth} & \footnotesize{$79.25$}
		\; $\textup{years}$ & \footnotesize{\cite{sardar2017mathematical}}\\\\  
		\footnotesize{$\alpha $} & \footnotesize{The power law exponent governs the force of infection transmitted by an infected vector to susceptible human}  &
		\footnotesize{$(0 - 1)$
			\; $\textup{}$ } & \footnotesize{\cite{sardar2017mathematical}} \\\\
		\footnotesize{$\beta$} & \footnotesize{The order of fractional derivative in recovery term}  &
		\footnotesize{$(0 - 1)$
			\; $\textup{}$ } & \footnotesize{\cite{sardar2017mathematical}} \\\\
		\footnotesize{$p$} & \footnotesize{The fractional-order derivative denotes the force of infection spread by infected human to susceptible vector}  &
		\footnotesize{$(0 - 1)$
			\; $\textup{}$ } & \footnotesize{\cite{andraud2012dynamic,sardar2017mathematical}}\\\\
		\footnotesize{$\beta_{V}^{H}$} & \footnotesize{Probability of disease transmission from infected vector to susceptible human}  & 
		\footnotesize{$(0 - 1)$
			\; $\textup{}$ } & \footnotesize{\cite{andraud2012dynamic,sardar2017mathematical}}\\\\
		\footnotesize{$\beta_{H}^{V}$} & \footnotesize{Transmission probability of susceptible vector from infected human}  &
		\footnotesize{$(0 - 1)$
			\; $\textup{}$ } & \footnotesize{\cite{sardar2017mathematical}} \\\\
		\footnotesize{$b$} & \footnotesize{Average number of daily biting rate per mosquito}&
		\footnotesize{$(1.7 - 7)$
			\; $\textup{day}^{-1}$ } &  \footnotesize{\cite{andraud2012dynamic,pinho2010modelling,sardar2017mathematical}} \\\\
		\footnotesize{$\mu_{V}$} & \footnotesize{Mosquito's mortality rate}  &
		\footnotesize{$(0.14 - 1.75)$
			\; $\textup{day}^{-1}$ }  & \footnotesize{\cite{andraud2012dynamic,sardar2017mathematical}}\\\\
		\footnotesize{$C$} & \footnotesize{Scaling constant in the probability density function $\Psi(t)$ defined in~\eqref{EQ:psit-Mittag-leffler}}  &
		\footnotesize{$1.0$
			\; $\textup{}$ }  & \footnotesize{Assumed}\\\\
		\footnotesize{$\delta$} & \footnotesize{Ratio between total vector and human population} &
		\footnotesize{$(1 - 5)$
			\; $\textup{}$ }  & \footnotesize{\cite{sardar2017mathematical}}\\\\
		\footnotesize{$\Pi_{V}$} & \footnotesize{Recruitment rate of the vector population} &
		\footnotesize{$\delta \times N_{H}$
			\; $\textup{}$ }  & \footnotesize{\cite{sardar2017mathematical}}\\\\      [0.2ex]
		\hline\\
	\end{tabular}
	\label{Table:Model-parameters}
\end{table}
\section{Key mathematical properties associated with the Model~\eqref{EQ:Fractional-order-vector-borne-disease-model}}\label{sec:Mathematical-properties}
To begin this section, we introduce the Lemma given as follows:

\textbf{Lemma}[Lemma~2 in~\citep{yang1996permanence}]~\label{lemma-4}:
Let us consider the open set $\Omega \subset \mathbb{R} \times \mathbb{C}^n$, and $\mathcal{F}_{i} \in C(\Omega, \mathbb{R}), i=1,2,...,n$. If $\mathcal{F}_{i}|_{x_{i}(t)=0,\mathcal{X}_{t} \in \mathbb{C}_{+0}^n}\geq 0$, $\mathcal{X}_t=\left[x_{1}(t),x_{2}(t),.....,x_{n}(t)\right]^T, i=1,2,....,n$, then the invariant domain $\mathbb{C}_{+0}^n =\lbrace \phi=(\phi_1,.....,\phi_n):\phi \in \mathbb{C}([-\tau,0],\mathbb{R}_{+0}^n)\rbrace$ defines for the the following equations
\begin{eqnarray}
	\begin{array}{llll}
		\frac{dx_i(t)}{dt}=\mathcal{F}_i(t,\mathcal{X}_t),\hspace{0.3cm} t\geq \Hat{t},\hspace{0.2cm} i=1,2,...,n.\\\nonumber
	\end{array}
\end{eqnarray} in which $\mathbb{R}_{+0}^{n}=\lbrace (x_{1},....x_{n}): x_i\geq 0,\hspace{0.2cm} i=1,2....,n \rbrace$
\begin{proposition}~\label{prop-1}
	$\mathbb{R}_{+0}^5$ is the invariant domain for the dengue model~\eqref{EQ:Fractional-order-vector-borne-disease-model}.
\end{proposition}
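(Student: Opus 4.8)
The plan is to verify the hypotheses of the Yang Lemma (Lemma~2 of~\citep{yang1996permanence}) for the five-dimensional system~\eqref{EQ:Fractional-order-vector-borne-disease-model}, whose state vector is $\mathcal{X}=(S_{H},I_{H},R_{H},S_{V},I_{V})^{T}$. Writing the right-hand sides as $\mathcal{F}_{1},\dots,\mathcal{F}_{5}$, it suffices to show that for each index $i$ the function $\mathcal{F}_{i}$ is nonnegative whenever the $i$-th coordinate vanishes and all remaining coordinates have nonnegative history; the lemma then guarantees that $\mathbb{R}_{+0}^{5}$ is invariant. So first I would set up the five boundary sets $\{S_{H}=0\}, \{I_{H}=0\}, \{R_{H}=0\}, \{S_{V}=0\}, \{I_{V}=0\}$ and reduce the proposition to a sign check of each $\mathcal{F}_{i}$ on the corresponding set.

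The difficulty is that three of the five equations contain the nonlocal Riemann--Liouville terms ${}_{RL}D_{t}^{1-\alpha}[I_{V}e^{\mu_{V}t}]$, ${}_{RL}D_{t}^{1-\beta}[I_{H}e^{\mu_{H}t}]$ and ${}_{RL}D_{t}^{1-p}[I_{H}e^{\mu_{H}t}]$, each premultiplied by a tempering exponential. Because a fractional derivative depends on the entire past history of its argument, one cannot argue that such a term vanishes merely because the current value of $I_{H}$ or $I_{V}$ is zero, which is exactly what the pointwise sign check would seem to require. This is where Theorem~\ref{Theorem-1} enters, and it is the key step of the whole argument. The hypothesis of the Yang Lemma evaluates the vector field precisely on the set where the whole history $\mathcal{X}_{t}$ is nonnegative, which is exactly the regime $F\ge 0$ in which Theorem~\ref{Theorem-1} applies; so I would invoke it with $F=I_{V}$, exponential rate $\mu_{V}$ and order $1-\alpha$ to obtain $e^{-\mu_{V}t}\,{}_{RL}D_{t}^{1-\alpha}[I_{V}e^{\mu_{V}t}]\approx \mu_{V}^{1-\alpha}I_{V}(t)$, and analogously reduce the recovery and human-to-vector kernels to $\mu_{H}^{1-\beta}I_{H}(t)/C^{\beta}$ and $\mu_{H}^{1-p}I_{H}(t)/C^{p}$. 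After this substitution every nonlocal operator is replaced by a nonnegative multiple of $I_{V}$ or $I_{H}$, turning the nonlocal problem into a local, pointwise one.

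With this reduction the five cases become immediate. On $\{S_{H}=0\}$ the transmission term carries the vanishing factor $S_{H}$, leaving $\mathcal{F}_{1}=\mu_{H}N_{H}>0$, and on $\{S_{V}=0\}$ one similarly gets $\mathcal{F}_{4}=\Pi_{V}>0$. On $\{I_{H}=0\}$ the recovery contribution $\mu_{H}^{1-\beta}I_{H}/C^{\beta}$ and the linear loss $\mu_{H}I_{H}$ both vanish, so that $\mathcal{F}_{2}\approx \tfrac{b^{\alpha}}{N_{H}}\beta_{V}^{H}S_{H}\,\mu_{V}^{1-\alpha}I_{V}\ge 0$ because $S_{H},I_{V}\ge 0$. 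Likewise $\mathcal{F}_{3}\approx \mu_{H}^{1-\beta}I_{H}/C^{\beta}\ge 0$ on $\{R_{H}=0\}$ and $\mathcal{F}_{5}\approx \tfrac{b^{p}}{N_{H}C^{p}}\beta_{H}^{V}S_{V}\,\mu_{H}^{1-p}I_{H}\ge 0$ on $\{I_{V}=0\}$, using positivity of all parameters together with $S_{V},I_{H}\ge 0$. Hence all five hypotheses of the Yang Lemma are satisfied and $\mathbb{R}_{+0}^{5}$ is invariant.

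I expect the main obstacle to be the correct and honest use of Theorem~\ref{Theorem-1} rather than the concluding sign check: one must match its notation carefully ($\beta\mapsto\mu_{V}$ or $\mu_{H}$ and $y\mapsto 1-\alpha,\,1-\beta,\,1-p$), confirm that the nonnegative-history requirement of the Yang Lemma supplies exactly the $F\ge 0$ hypothesis of the theorem, and be prepared to treat the asymptotic proportionality ``$\approx$'' as an effective equality in the relevant regime, in the same spirit in which the convolution kernels were replaced by tempered fractional derivatives during the derivation of~\eqref{EQ:Fractional-order-vector-borne-disease-model}.
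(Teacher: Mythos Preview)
Your overall plan (verify the boundary sign conditions of the Yang Lemma for each coordinate) is exactly what the paper does, and the $S_{H}$ and $S_{V}$ cases are handled identically.  The difference, and the gap, is in how you treat the nonlocal terms in the three infected/recovered equations.

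The paper does \emph{not} invoke Theorem~\ref{Theorem-1} in this proof.  For the $I_{H}$-equation it uses the exact Riemann--Liouville/Caputo identity~\eqref{EQ:Relation R-L and Caputo} to write
\[
e^{-\mu_{V}t}\,{}_{RL}D_{t}^{1-\alpha}\bigl[I_{V}e^{\mu_{V}t}\bigr]
=\frac{1}{\Gamma(\alpha)}\int_{0}^{t}e^{-\mu_{V}(t-s)}(t-s)^{\alpha-1}\bigl[\mu_{V}I_{V}(s)+I_{V}'(s)\bigr]\,ds
+\frac{I_{V}(0)\,e^{-\mu_{V}t}t^{\alpha-1}}{\Gamma(\alpha)},
\]
and then substitutes the $I_{V}$-equation itself, $\mu_{V}I_{V}+I_{V}'=\dfrac{b^{p}\beta_{H}^{V}}{N_{H}C^{p}}\,S_{V}\,e^{-\mu_{H}s}\,{}_{RL}D_{s}^{1-p}[I_{H}e^{\mu_{H}s}]$.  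On the set where the $I_{H}$-history vanishes, this inner fractional derivative is identically zero, so only the manifestly nonnegative initial-data contribution $\dfrac{I_{V}(0)b^{\alpha}\beta_{V}^{H}S_{H}}{N_{H}\Gamma(\alpha)}e^{-\mu_{V}t}t^{\alpha-1}$ survives.  The $R_{H}$ and $I_{V}$ cases are declared analogous.

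Your route via Theorem~\ref{Theorem-1} is tempting because it makes the sign check pointwise, but that theorem is an \emph{asymptotic} statement: the relation $e^{-\beta t}{}_{RL}D_{t}^{y}[e^{\beta t}F]\approx\beta^{y}F$ is obtained in~\eqref{EQ:Final-results} by discarding a remainder that tends to zero only as $t\to\infty$.  Invariance, however, requires the inequality $\mathcal{F}_{i}\ge 0$ for \emph{every} $t\ge 0$, including small $t$ where that remainder is neither negligible nor of definite sign.  So ``treating $\approx$ as an effective equality'' is precisely the step that fails here; it is not analogous to the model derivation, where power-law kernels yield fractional operators \emph{exactly}.  The repair is to replace Theorem~\ref{Theorem-1} by the exact identity~\eqref{EQ:Relation R-L and Caputo} together with the state equations, as the paper does.
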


\begin{proof}
	Now, from the equation~\eqref{EQ:Fractional-order-vector-borne-disease-model}, we conclude that:
	\begin{eqnarray}~\label{EQ:Model-positivity}
		\begin{array}{llll}
			\displaystyle	\frac{d S_{H}}{d t}\left|_{S_{H}=0,\hspace{0.2cm} \mathcal{X} \in \mathbb{R}_{+0}^5 }\right.& =&  \displaystyle \mu_{H} N_{H} > 0,\\
			\displaystyle	\frac{d S_{V}}{d t}\left|_{S_{V}=0,\hspace{0.2cm} \mathcal{X} \in \mathbb{R}_{+0}^5}\right. &=&\displaystyle \Pi_{V}>0
		\end{array}
	\end{eqnarray}
	Our objective is to show that, $\displaystyle \frac{dI_{H}}{dt}\left|_{I_H}=0,\mathcal{X}\in \mathbb{R}_{+0}^{5}\right.\geq0$. By utilizing \eqref{EQ:Relation R-L and Caputo} and the Riemann-Liouville and Caputo fractional derivative relationship in \cite{podlubny1999fractional}, we obtain
	\begin{eqnarray}
		\begin{array}{llll}
			\displaystyle	\frac{d I_{H}}{d t}&=&  \displaystyle \frac{b^{\alpha}}{N_{H}} \beta_{V}^{H} S_{H}
			e^{-\mu_{V} t}~ _{RL}{D_{t}}^{1-\alpha}\left[I_{V}(t) e^{\mu_{V}t}\right]\\&=&  \displaystyle \frac{b^{\alpha}}{N_{H}} \beta_{V}^{H} S_{H}\int_{0}^{t}  e^{-\mu_{V}( t-s)} (t-s)^{\alpha-1} \left[ \mu_{V}I_{V}(s)+I'_{V}(s)\right]  ds\\&+&  \displaystyle  \frac{I_{V}(0) b^{\alpha}}{N_{H} \Gamma(\alpha)} e^{-\mu_{V}t} t^{\alpha-1} \beta_{V}^{H} S_{H}\\&=& \displaystyle \frac{b^{\alpha}}{N_{H}} \beta_{V}^{H} S_{H}\int_{0}^{t}   e^{-\mu_{V}( t-s)} (t-s)^{\alpha-1} \left[ \frac{b^{p}}{N_{H} C^{p}} \beta_{H}^{V} S_{V}(s) e^{-\mu_{H} s} ~_{RL}D_{s}^{1-p}\left[I_{H}(s) e^{\mu_{H} s}\right]\right] ds\\&+&   \displaystyle \frac{I_{V}(0) b^{\alpha}}{N_{H} \Gamma(\alpha)} e^{-\mu_{V}t} t^{\alpha-1} \beta_{V}^{H} S_{H}.
		\end{array}
	\end{eqnarray}
	Hence,
	\begin{eqnarray}
		\begin{array}{llll}
			\displaystyle	\frac{d I_{H}}{d t}\left|_{I_{H}=0,\hspace{0.2cm} \mathcal{X} \in \mathbb{R}_{+0}^5}\right.&=&  \displaystyle \frac{I_{V}(0) b^{\alpha}}{N_{H} \Gamma(\alpha)} e^{-\mu_{V}t} t^{\alpha-1} \beta_{V}^{H} S_{H},\\
			\Rightarrow \frac{d I_{H}}{d t}\left|_{I_{H}=0,\hspace{0.2cm} \mathcal{X} \in \mathbb{R}_{+0}^5}\right. &\geq&  0.
		\end{array}
		\label{EQ:Model-positivity1}
	\end{eqnarray}
	Similarly,
	\begin{eqnarray}
		\begin{array}{llll}
			\displaystyle	\frac{d R_{H}}{d t}\left|_{R_{H}=0,\hspace{0.2cm} \mathcal{X} \in \mathbb{R}_{+0}^5}\right. &\geq& 0,\\
			\displaystyle	\frac{d I_{V}}{d t}\left|_{I_{V}=0,\hspace{0.2cm} \mathcal{X} \in \mathbb{R}_{+0}^5}\right. &\geq & 0.\\
		\end{array}
		\label{EQ:Model-positivity2}
	\end{eqnarray}
	Thus, following Lemma~\eqref{lemma-4}, $\mathbb{R}_{+0}^5$ act as an invariant domain with respect to the system~\eqref{EQ:Fractional-order-vector-borne-disease-model}.
\end{proof}

In the real world, all the solution associated with a disease process remain bounded. Consequently, we impose further limitations on the invariant domain $\mathbb{R}_{+0}^5$ for the model~\eqref{EQ:Fractional-order-vector-borne-disease-model} to a bounded region as detailed below:

\begin{equation}
	\displaystyle \mathbb{D}=\left\lbrace (S_{H}, I_{H},R_{H},S_{V},I_{V})\in \mathbb{R}_{+0}^5: S_{H}+I_{H}+R_{H}= N_{H}, S_{V}+I_{V}\leq \frac{\Pi_{V}}{\mu_{V}}\right\rbrace.
	\label{EQ:invariant-bounded-domain}
\end{equation}

\begin{proposition}~\label{prop-2}
	The domain $\mathbb{D}$ which is closed and bounded and contained in $\mathbb{R}_{+0}^5$ act as a positively invariant and global attracting set for the system~\eqref{EQ:Fractional-order-vector-borne-disease-model}.
\end{proposition}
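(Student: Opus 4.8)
The plan is to exploit the structural feature that, although each compartment equation in~\eqref{EQ:Fractional-order-vector-borne-disease-model} carries a complicated Riemann--Liouville fractional flux, these fluxes enter pairs of equations with opposite signs and therefore cancel once the compartments are summed. First I would record that $\mathbb{D}$ is closed and bounded essentially by inspection: it is carved out of the nonnegative orthant by the continuous equality $S_H+I_H+R_H=N_H$ and the continuous inequality $S_V+I_V\le \Pi_V/\mu_V$, so it is a compact subset of $\mathbb{R}_{+0}^5$, while Proposition~\ref{prop-1} already guarantees that $\mathbb{R}_{+0}^5$ is invariant, i.e.\ that nonnegativity of every coordinate is preserved.

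For the human block I would add the first three equations of~\eqref{EQ:Fractional-order-vector-borne-disease-model}. The vector-to-human transmission term cancels between $dS_H/dt$ and $dI_H/dt$, and the fractional recovery term cancels between $dI_H/dt$ and $dR_H/dt$, leaving the scalar linear ODE
\begin{equation}
\frac{d}{dt}\bigl(S_H+I_H+R_H\bigr)=\mu_H N_H-\mu_H\bigl(S_H+I_H+R_H\bigr).
\end{equation}
Writing $\mathcal N_H(t)=S_H(t)+I_H(t)+R_H(t)$, this integrates to $\mathcal N_H(t)=N_H+\bigl(\mathcal N_H(0)-N_H\bigr)e^{-\mu_H t}$. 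Hence if $\mathcal N_H(0)=N_H$ the equality persists for all $t$ (invariance), and for any nonnegative start $\mathcal N_H(t)\to N_H$ exponentially (attraction of the human constraint).

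For the vector block, summing the last two equations cancels the human-to-vector fractional transmission term between $dS_V/dt$ and $dI_V/dt$, leaving
\begin{equation}
\frac{d}{dt}\bigl(S_V+I_V\bigr)=\Pi_V-\mu_V\bigl(S_V+I_V\bigr).
\end{equation}
Setting $\mathcal N_V(t)=S_V(t)+I_V(t)$ gives $\mathcal N_V(t)=\tfrac{\Pi_V}{\mu_V}+\bigl(\mathcal N_V(0)-\tfrac{\Pi_V}{\mu_V}\bigr)e^{-\mu_V t}$. If $\mathcal N_V(0)\le \Pi_V/\mu_V$ the coefficient of the exponential is nonpositive, so $\mathcal N_V(t)\le \Pi_V/\mu_V$ for all $t$ (invariance), whereas for an arbitrary nonnegative start $\mathcal N_V(t)\to \Pi_V/\mu_V$ (attraction). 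Combining the two reduced scalar flows with the coordinatewise nonnegativity from Proposition~\ref{prop-1} shows that a trajectory launched in $\mathbb{D}$ remains in $\mathbb{D}$ and that every trajectory in $\mathbb{R}_{+0}^5$ approaches $\mathbb{D}$.

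The only genuine point requiring care---which I would verify explicitly before invoking the reduced ODEs---is the exact cancellation of the three fractional-derivative fluxes in the two sums. Once that bookkeeping is confirmed, the reduced dynamics are simply two decoupled linear scalar equations, and the remaining estimates are elementary; I expect no obstacle beyond this cancellation check, since the memory terms never survive the summation and so the fractional orders $\alpha,\beta,p$ play no role in the boundedness and attraction argument.
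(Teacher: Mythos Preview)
Your approach is essentially the same as the paper's: sum the compartment equations so that the fractional fluxes cancel in pairs, then analyze the resulting linear scalar ODEs for the human and vector totals. The one minor difference is that for the human block the paper simply asserts $\tfrac{d}{dt}(S_H+I_H+R_H)=0$ (tacitly using the model constraint $S_H+I_H+R_H=N_H$ from the outset), whereas you correctly derive the general ODE $\tfrac{d}{dt}\mathcal N_H=\mu_H N_H-\mu_H\mathcal N_H$ and use it to show both invariance and attraction of the human constraint; this makes your argument slightly more self-contained for the global-attraction claim, but otherwise the two proofs coincide.
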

\begin{proof}
	From ~\eqref{EQ:Fractional-order-vector-borne-disease-model}, we have:
	\begin{eqnarray}
		\displaystyle \frac{d\bigg(S_{H}+I_{H}+R_{H}\bigg)}{dt} &=& \displaystyle 0,\\\nonumber \implies S_{H}(t)+I_{H}(t)+R_{H}(t) &=& N_{H}\;\; \forall t,\\\nonumber  \implies S_{H}(t) &\leq& N_{H},~{} I_{H}(t) \leq N_{H},~{} R_{H}(t) \leq N_{H},\;\;\forall t.  
	\end{eqnarray}
	Thus $S_{H}$, $I_{H}$, and $R_{H}$ are bounded above by some $N_{H} > 0$ for any $t>0$.
	Again, adding the last two equation from~\eqref{EQ:Fractional-order-vector-borne-disease-model}, we get
	\begin{eqnarray}~\label{EQ:invariant-bounded-domain-1}
		\begin{array}{llll}
			\displaystyle	\frac{d N_V}{dt} &= &  \displaystyle \Pi_{V} -\mu_{V} N_{V}.\\
			\displaystyle	\implies N_{V} &= &  \displaystyle \frac{\Pi_{V}}{\mu_{V}} +\left(N_{V}(0) - \frac{\Pi_{V}}{\mu_{V}}\right)e^{-\mu_{V} t}.
		\end{array}
	\end{eqnarray}
	As a result of~\eqref{EQ:invariant-bounded-domain-1}, we have~$N_{V}(0) \leq \displaystyle\frac{\Pi_{V}}{\mu_{V}}$ $\implies$ $N_{V}(t) \leq \displaystyle\frac{\Pi_{V}}{\mu_{V}}$. Accordingly, for the system~\eqref{EQ:Fractional-order-vector-borne-disease-model} the domain $\mathbb{D}$ is positively invariant. Furthermore, $N_{V}(0) > \displaystyle\frac{\Pi}{\mu_{V}}$ $\implies$ $\displaystyle \lim\limits_{t\rightarrow \infty}\sup N_{V}(t)\leq \displaystyle\frac{\Pi_{V}}{\mu_{V}}$, which necessarily conclude that the domain $\mathbb{D} \subset \mathbb{R}_{+0}^5$ serve as a global attracting set~\cite{zhao2003dynamical} with respect to the model~\eqref{EQ:Fractional-order-vector-borne-disease-model}.
\end{proof}
Since, $S_{H}+I_{H}+R_{H} =N_{H}$, and $\lim_{t\to\infty}(S_{V}+I_{V})(t) = \frac{\Pi_{V}}{\mu_{V}}$, therefore, model~\eqref{EQ:Fractional-order-vector-borne-disease-model} can be reduced as follows:
\begin{eqnarray}~\label{EQ:Fractional-order-vector-borne-disease-converted_model}
	\displaystyle	\frac{dS_{H}}{dt} &= &  \displaystyle \mu_{H} N_{H} - \frac{b^{\alpha}}{N_{H}} \beta_{V}^{H} S_{H}(t) e^{-\mu_{V} t}~ _{RL}{D_{t}}^{1-\alpha}\left[I_{V}(t) e^{\mu_{V} t} \right] -\mu_{H} S_{H},\\\nonumber 
	\displaystyle \frac{dI_{H}}{dt}&= &\displaystyle \frac{b^{\alpha}}{N_{H}} \beta_{V}^{H} S_{H}(t) e^{-\mu_{V} t}~ _{RL}{D_{t}}^{1-\alpha}\left[I_{V}(t) e^{\mu_{V} t} \right]- \frac{e^{-\mu_{H} t}}{C^{\beta}}~_{RL}D_{t}^{1-\beta}\left[I_{H}(t) e^{\mu_{H} t} \right]-\mu_{H} I_{H},\\\nonumber 
	\displaystyle \frac{dI_{V}}{dt}&=&\displaystyle \frac{b^{p}}{N_{H} C^{p}} \beta_{H}^{V} \left(\frac{\Pi_{V}}{\mu_{V}}-I_{V}\right) e^{-\mu_{H} t} ~_{RL}D_{t}^{1-p}\left[I_{H}(t) e^{\mu_{H} t}\right] -\mu_{V} I_{V},
\end{eqnarray}where $0 < \alpha \leq 1$, and $0 < \beta \leq p \leq 1$, $S_{H}(0)=S_{0}^{H}\geq 0$, $I_{H}(0)=I_{0}^{H}\geq 0$, and $I_{V}(0)=I_{0}^{V}\geq 0$. Furthermore, we assumed that all model~\eqref{EQ:Fractional-order-vector-borne-disease-converted_model} parameters are positive.
Henceforth, we can further reduce our invariant region $\mathbb{R}_{+0}^5$ for the dengue system~\eqref{EQ:Fractional-order-vector-borne-disease-converted_model} to a bounded region as follows:
\begin{eqnarray}~\label{EQ:invariant-bounded-domain-2}
	\mathbb{T}=\left\lbrace (S_{H}, I_{H},I_{V})\in \mathbb{R}_{+0}^3 :  S_{H}+I_{H}\leq N_{H}, I_{V}\leq \frac{\Pi_{V}}{\mu_{V}}\right\rbrace,\\\nonumber
	\mathbb{\mathring{T}}=\left\lbrace (S_{H}, I_{H},I_{V})\in \mathbb{R}_{+0}^3 : S_{H}+I_{H}<N_{H}, I_{V}< \frac{\Pi_{V}}{\mu_{V}}\right\rbrace,
\end{eqnarray} where, the interior of $\mathbb{T}$ is $\mathbb{\mathring{T}}$.
\begin{proposition}~\label{prop-3}
$\mathbb{T}$ a set that is both closed and bounded in the octant $ \mathbb{R}_{+0}^3$ forms a positively invariant and globally attracting set of the dynamical system denoted by the equation~\eqref{EQ:Fractional-order-vector-borne-disease-converted_model}.
\end{proposition}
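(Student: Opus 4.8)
The plan is to follow the template of Propositions~\ref{prop-1} and~\ref{prop-2}, now adapted to the three--dimensional reduced system~\eqref{EQ:Fractional-order-vector-borne-disease-converted_model}. Closedness and boundedness of $\mathbb{T}$ are immediate: $\mathbb{T}$ is cut out of $\mathbb{R}_{+0}^3$ by the two non--strict inequalities $S_{H}+I_{H}\le N_{H}$ and $I_{V}\le \Pi_{V}/\mu_{V}$, hence it is the intersection of closed half--spaces with the closed octant and is contained in the box $[0,N_{H}]\times[0,N_{H}]\times[0,\Pi_{V}/\mu_{V}]$. The real content is positive invariance together with global attraction.

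First I would reconfirm invariance of the octant $\mathbb{R}_{+0}^3$ by the same boundary--derivative check used in Proposition~\ref{prop-1}, setting each coordinate to zero in turn and expanding the Riemann--Liouville terms through the R--L/Caputo relation~\eqref{EQ:Relation R-L and Caputo}. On $\{S_{H}=0\}$ one obtains $dS_{H}/dt=\mu_{H}N_{H}>0$; on $\{I_{H}=0\}$ the only surviving contribution is the nonnegative initial--data term $\tfrac{I_{V}(0)b^{\alpha}}{N_{H}\Gamma(\alpha)}e^{-\mu_{V}t}t^{\alpha-1}\beta_{V}^{H}S_{H}\ge 0$; and on $\{I_{V}=0\}$ the transmission flux is likewise nonnegative, so $dI_{V}/dt\ge 0$. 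The lemma from~\citep{yang1996permanence} then gives invariance of $\mathbb{R}_{+0}^3$.

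Next I would establish the two upper bounds. Adding the $S_{H}$ and $I_{H}$ equations of~\eqref{EQ:Fractional-order-vector-borne-disease-converted_model} cancels the transmission term and leaves $\tfrac{d(S_{H}+I_{H})}{dt}=\mu_{H}N_{H}-\tfrac{e^{-\mu_{H}t}}{C^{\beta}}\,_{RL}D_{t}^{1-\beta}[I_{H}e^{\mu_{H}t}]-\mu_{H}(S_{H}+I_{H})$. The recovery term is nonnegative because, by Theorem~\ref{Theorem-1}, $e^{-\mu_{H}t}\,_{RL}D_{t}^{1-\beta}[I_{H}e^{\mu_{H}t}]\approx\mu_{H}^{1-\beta}I_{H}\ge 0$ (equivalently it is the convolution of a nonnegative kernel with $I_{H}\ge 0$ plus a nonnegative initial term), so $\tfrac{d(S_{H}+I_{H})}{dt}\le \mu_{H}N_{H}-\mu_{H}(S_{H}+I_{H})$. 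A standard comparison argument then yields $S_{H}(t)+I_{H}(t)\le N_{H}$ whenever the inequality holds at $t=0$, and $\limsup_{t\to\infty}(S_{H}+I_{H})(t)\le N_{H}$ from arbitrary nonnegative data. For the vector I would evaluate the $I_{V}$ equation on the face $\{I_{V}=\Pi_{V}/\mu_{V}\}$: there the factor $(\Pi_{V}/\mu_{V}-I_{V})$ vanishes, giving $dI_{V}/dt=-\mu_{V}(\Pi_{V}/\mu_{V})=-\Pi_{V}<0$, which forbids an outward crossing; and for $I_{V}>\Pi_{V}/\mu_{V}$ both terms on the right are nonpositive, forcing $I_{V}$ to decrease, whence $\limsup_{t\to\infty}I_{V}(t)\le \Pi_{V}/\mu_{V}$. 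Combining the boundary checks with these two $\limsup$ estimates shows that $\mathbb{T}$ is positively invariant and globally attracting.

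The step I expect to be the main obstacle is the careful justification that the memory (Riemann--Liouville) terms keep a definite sign along trajectories, and that the comparison and $\limsup$ arguments are legitimate for a \emph{non--autonomous integro--differential} system rather than an ordinary ODE. This is precisely where Theorem~\ref{Theorem-1} and the convolution representation of the fractional terms carry the weight, and they must be invoked in place of the naive autonomous comparison principle used for the $N_{V}$ equation in Proposition~\ref{prop-2}.
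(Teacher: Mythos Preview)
Your approach is essentially what the paper intends: its entire proof of Proposition~\ref{prop-3} is the single sentence ``Proof is similar to Proposition~\ref{prop-2},'' so any argument that mirrors the template of Propositions~\ref{prop-1}--\ref{prop-2} for the reduced three--dimensional system is in line with the paper. In fact you supply considerably more detail than the paper does---the boundary checks for the octant, the comparison inequality for $S_{H}+I_{H}$, and the face argument for $I_{V}=\Pi_{V}/\mu_{V}$---and you correctly flag the one genuine subtlety (the sign of the tempered Riemann--Liouville terms along trajectories), which the paper never addresses explicitly but tacitly relies on throughout via Theorem~\ref{Theorem-1} and the derivation of the fluxes from nonnegative survival kernels.
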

\begin{proof}
	Proof is similar to~Proposition~\ref{prop-2}.
\end{proof}
Now using the Theorem~\eqref{Theorem-1} we have,
\begin{eqnarray}
	\centering
	\begin{array}{llll}
		\displaystyle	e^{-\mu_{V} t}~_{RL}{D_{t}}^{1-\alpha}\left[I_{V}(t) e^{\mu_{V} t}\right] &=&\displaystyle \mu_{V}^{1-\alpha} I_{V}(t)\frac{\Gamma(1-\alpha)}{\gamma(\mu_{V}\xi_{1},1-\alpha)}-\frac{e^{-\mu_{V}\xi_{1}} (\mu_{V}\xi_{1})^{-\alpha}\gamma(\mu_{V}t,\alpha)}{\mu_{V}^{1-\alpha}\gamma(\mu_{V}\xi_{1},1-\alpha)\Gamma(\alpha)}I_{V}(\xi_{2}),\\
		\displaystyle e^{-\mu_{H} t}~_{RL}{D_{t}}^{1-\beta}\left[I_{H}(t) e^{\mu_{H} t}\right] &=&  \displaystyle \mu_{H}^{1-\beta} I_{H}(t) \frac{\Gamma(1-\beta)}{\gamma(\mu_{H}\xi_{1},1-\beta)}-\frac{e^{-\mu_{H}\xi_{1}} (\mu_{H}\xi_{1})^{-\beta}\gamma(\mu_{H}t,\alpha)}{\mu_{H}^{1-\beta}\gamma(\mu_{H}\xi_{1},1-\beta)\Gamma(\beta)}I_{H}(\xi_{2}),\\ 
		\displaystyle e^{-\mu_{H} t}~_{RL}{D_{t}}^{1-p}\left[I_{H}(t) e^{\mu_{H} t}\right] &=& \displaystyle\mu_{H}^{1-p} I_{H}(t)\frac{\Gamma(1-p)}{\gamma(\mu_{H}\xi_{1},1-p)}-\frac{e^{-\mu_{H}\xi_{1}} (\mu_{H}\xi_{1})^{-p}\gamma(\mu_{H}t,p)}{\mu_{H}^{1-p}\gamma(\mu_{H}\xi_{1},1-p)\Gamma(p)}I_{H}(\xi_{2}),
	\end{array}
	\hspace{0.4cm}\label{EQ:Relation-fractional_derivative-2}
\end{eqnarray}
where, $0<\xi_{1},\xi_{2}<t$. 
Hence the equation~\eqref{EQ:Fractional-order-vector-borne-disease-converted_model} converted to
\begin{eqnarray}
	\small
	\begin{array}{llll}
		\displaystyle	\frac{dS_{H}}{dt} &=& \displaystyle \mu_{H} N_{H} - \frac{b^{\alpha}\beta_{V}^{H} \mu_{V}^{1-\alpha}}{N_{H}}  \frac{\Gamma(1-\alpha)}{\gamma(\mu_{V}\xi_{1},1-\alpha)}S_{H}  I_{V}\\ &+&\displaystyle\frac{b^{\alpha}\beta_{V}^{H} \mu_{V}^{1-\alpha}}{N_{H}}\frac{e^{-\mu_{V}\xi_{1}} (\mu_{V}\xi_{1})^{-\alpha}\gamma(\mu_{V}t,\alpha)}{\gamma(\mu_{V}\xi_{1},1-\alpha)\Gamma(\alpha)}S_{H}  I_{V}(\xi_{2})   -\mu_{H} S_{H},\\ 
		\displaystyle \frac{dI_{H}}{dt}&=&\displaystyle \frac{b^{\alpha}\beta_{V}^{H} \mu_{V}^{1-\alpha}}{N_{H}}  \frac{\Gamma(1-\alpha)}{\gamma(\mu_{V}\xi_{1},1-\alpha)}S_{H}  I_{V}-\frac{b^{\alpha}\beta_{V}^{H} \mu_{V}^{1-\alpha}}{N_{H}}\frac{e^{-\mu_{V}\xi_{1}} (\mu_{V}\xi_{1})^{-\alpha}\gamma(\mu_{V}t,\alpha)}{\gamma(\mu_{V}\xi_{1},1-\alpha)\Gamma(\alpha)}S_{H}  I_{V}(\xi_{2}) \\ &-&\displaystyle \frac{\Gamma(1-\beta)}{\gamma(\mu_{H}\xi_{1},1-\beta)}\frac{\mu_{H}^{1-\beta}}{C^{\beta}}~ I_{H}+\frac{e^{-\mu_{H}\xi_{1}} (\mu_{H}\xi_{1})^{-\beta}\gamma(\mu_{H}t,\alpha)}{\gamma(\mu_{H}\xi_{1},1-\beta)\Gamma(\beta)}\frac{\mu_{H}^{1-\beta}}{C^{\beta}}~ I_{H}(\xi_{2}) -\mu_{H} I_{H},\\ 
		\displaystyle \frac{dI_{V}}{dt}&= &\displaystyle \frac{b^{p}\beta_{H}^{V} \mu_{H}^{1-p}}{N_{H} C^{p}}\frac{\Gamma(1-p)}{\gamma(\mu_{H}\xi_{1},1-p)} \left(\frac{\Pi_{V}}{\mu_{V}}-I_{V}\right) I_{H}-\frac{b^{p}\beta_{H}^{V} \mu_{H}^{1-p}}{N_{H} C^{p}} \frac{e^{-\mu_{H}\xi_{1}} (\mu_{H}\xi_{1})^{-p}\gamma(\mu_{H}t,p)}{\gamma(\mu_{H}\xi_{1},1-p)\Gamma(p)}\left(\frac{\Pi_{V}}{\mu_{V}}-I_{V}\right) I_{H}(\xi_{2})\\ &- &\displaystyle\mu_{V} I_{V}
	\end{array}
	\label{EQ:Fractional-order-vector-borne-disease-converted_model-1}
\end{eqnarray}
where, $0 < \alpha \leq 1$, $0 <\beta \leq p \leq 1$, $S_{H}(0) = S^{H}_{0}$, $I_{H}(0) = I^{H}_{0}$, $R_{H}(0) = R^{H}_{0}$, $S_{V}(0) = S^{V}_{0}$, $I_{V}(0) = I^{V}_{ 0}$.

\subsection{The basic reproductive index}
The basic reproductive indexed $R_{0}$, known as basic reproduction ratio is an epidemiological constant which reflect the infection potential or transmissibility related to a vira illness. The above signifies an average number of new infections resulting from a primary caase within a group being entirely susceptible to the infection. We used the next generation matrix method~\cite{van2002reproduction} to estimate the basic reproduction number $(R_{0})$ for the fractional-order dengue model~\eqref{EQ:Fractional-order-vector-borne-disease-converted_model-1}.
Let us define the infected compartment of the equation~\eqref{EQ:Fractional-order-vector-borne-disease-converted_model-1} as follows:

$\begin{pmatrix}
	\frac{dI_{H}}{dt}\\
	\frac{dI_{V}}{dt}
\end{pmatrix}= \begin{pmatrix}
	F(t)-V(t)\\
\end{pmatrix}\begin{pmatrix}
	I_{H}\\
	I_{V}
\end{pmatrix}$,\\
where, $F(t)$ and $V(t)$ can be written as follows:
\begin{eqnarray}
	\centering
	\footnotesize
	\begin{array}{llll}
		F(t) = \left(
		\begin{array}{c c}
			0& \frac{b^{\alpha} \beta_{V}^{H}\mu_{V}^{1-\alpha}\Gamma(1-\alpha)}{ N_{H}\gamma(\mu_{V}\xi_{1},1-\alpha)}S_{H}^* -\frac{e^{-\mu_{V}\xi_{1}} (\mu_{V}\xi_{1})^{-\alpha}\gamma(\mu_{V}t,\alpha)}{\mu_{V}^{1-\alpha}\gamma(\mu_{V}\xi_{1},1-\alpha)\Gamma(\alpha)}S_{H}^* \\
			\frac{b^{p} \beta_{H}^{V}\Pi_{V}  }{N_{H} C^{p} \mu_{V}} \left[\frac{\mu_{H}^{1-p} \Gamma(1-p)}{\gamma(\mu_{H}\xi_{1},1-p)}-\frac{e^{-\mu_{H}\xi_{1}} (\mu_{H}\xi_{1})^{-p}\gamma(\mu_{H}t,p)}{\mu_{H}^{1-p}\gamma(\mu_{H}\xi_{1},1-p)\Gamma(p)}\right] &0\\
		\end{array}
		\right),
	\end{array}
	\hspace{0.6cm}\label{EQ:Basic reproduction number-1}
\end{eqnarray}

and 
\begin{eqnarray}
	\centering
	\small
	\begin{array}{llll}
		\displaystyle	V(t)&=&  \displaystyle \left(
		\begin{array}{c c}
			\mu_{H}+\frac{\mu_{H}^{1-\beta}}{C^{\beta}}\frac{\Gamma(1-\beta)}{\gamma(\mu_{H}\xi_{1},1-\beta)}-\frac{e^{-\mu_{H}\xi_{1}} (\mu_{H}\xi_{1})^{-\beta}\gamma(\mu_{H}t,\alpha)}{\mu_{H}^{1-\beta}\gamma(\mu_{H}\xi_{1},1-\beta)\Gamma(\beta)}  & 0 \\
			0 &  \mu_{V}\\
		\end{array}
		\right).\\
	\end{array}
	\label{EQ:Basic reproduction number-2}
\end{eqnarray}
Then $\lim_{t \rightarrow \infty} F(t)=F$, and $\lim_{t \rightarrow \infty} V(t)=V$.\\
where, $F$ and $V$ is defined as,
\begin{eqnarray}
	\centering
	\small
	\begin{array}{llll}
		\displaystyle	F &=&   \displaystyle \left(
		\begin{array}{c c}
			0& \frac{b^{\alpha} \beta_{V}^{H}\mu_{V}^{1-\alpha}}{ N_{H}}S_{H}^*  \\
			\frac{b^{p} \beta_{H}^{V}\Pi_{V} \mu_{H}^{1-p} }{N_{H} C^{p} \mu_{V}}&0  \\
		\end{array}
		\right),\\
	\end{array}
	\label{EQ:Basic reproduction number-3}
\end{eqnarray}
and 
\begin{eqnarray}
	\centering
	\small
	\begin{array}{llll}
		V &=& \left(
		\begin{array}{c c}
			\mu_{H}+\frac{\mu_{H}^{1-\beta}}{C^{\beta}} & 0 \\
			0 &  \mu_{V}\\
		\end{array}
		\right).\\
	\end{array}
	\label{EQ:Basic reproduction number-4}
\end{eqnarray}
Then \begin{eqnarray}
	\centering
	\small
	\begin{array}{llll}
		FV^{-1} &=& \left(
		\begin{array}{c c}
			0& \frac{b^{\alpha} \beta_{V}^{H}\mu_{V}^{1-\alpha}}{\mu_{V} }  \\
			\frac{b^{p} \beta_{H}^{V}\Pi_{V} \mu_{H}^{1-p}C^{\beta}\mu_{H}^{\beta} }{N_{H} C^{p}\mu_{V} \left(\mu_{H}^{\beta}C^{\beta}+1\right)}&0  \\
		\end{array}
		\right),\\
	\end{array}
	\label{EQ:Basic reproduction number-5}
\end{eqnarray}
so, $ FV^{-1}$ has eigen values are -$\lambda$ and $R_{0}$, where -$\lambda$ and $R_{0}$ are defined as follows:
\begin{eqnarray}
	\lambda& = -\sqrt{ \frac{\Pi_{V}~b^{\alpha+p}~ \beta_{V}^{H}~\beta_{H}^{V}~C^{\beta}}{N_{H}~C^{p}~ \mu_{V}^{1+\alpha}~\mu_{H}^{p-\beta}~ \left(\mu_{H}^{\beta}C^{\beta}+1\right)}},\\\nonumber
\end{eqnarray}
\begin{eqnarray}~\label{Basic reproduction number}
	R_{0}& = \sqrt{ \frac{\Pi_{V}~b^{\alpha+p}~ \beta_{V}^{H}~\beta_{H}^{V}~C^{\beta}}{N_{H}~C^{p}~ \mu_{V}^{1+\alpha}~\mu_{H}^{p-\beta}~ \left(\mu_{H}^{\beta}C^{\beta}+1\right)}}.\\\nonumber
\end{eqnarray}
The representation of basic reproduction number (see equation~\eqref{Basic reproduction number}) clearly indicate that $R_{0}$ depends on the three fractional order derivatives ($\alpha$, $\beta$, and $p$) appeared in the fractional order dengue system~\eqref{EQ:Fractional-order-vector-borne-disease-model}.
\section{Stability of equilibrium point}~\label{Stability of equilibrium point}
\subsection{Disease free equilibrium}
Let $\Psi_{0}$ represent the unique disease-free steady-state solution associated with the system~\eqref{EQ:Fractional-order-vector-borne-disease-converted_model} identified as follows:
\begin{eqnarray}~\label{EQ:Disease-free-equilibrium}  
	\Psi_{0}:~\left(S_{H}^{*}, I_{H}^{*}, I_{V}^{*} \right) =  \left( N_{H}, 0, 0\right).
\end{eqnarray}

\subsection{Local stability properties of $\Psi_{0}$}
If $R_{0}<1$, the disease-free equilibrium point $\Psi_{0}$ for the model~\eqref{EQ:Fractional-order-vector-borne-disease-model}, is stable unless it become unstable.
\begin{proposition}~\label{prop-4}
Asymptotic stability is observed locally for the model~\eqref{EQ:Fractional-order-vector-borne-disease-model} at the disease-free equilibrium point $\Psi_{0}$ if $R_{0} < 1$ and unstable if $R_{0}>1$.	
\end{proposition}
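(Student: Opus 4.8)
The plan is to linearize the reduced system~\eqref{EQ:Fractional-order-vector-borne-disease-converted_model-1} about $\Psi_{0}$ and relate the sign conditions on its Jacobian spectrum to $R_{0}$ through the next-generation matrices $F$ and $V$ already computed in~\eqref{EQ:Basic reproduction number-3} and~\eqref{EQ:Basic reproduction number-4}. First I would invoke Theorem~\ref{Theorem-1} together with the limits $\lim_{t\to\infty}F(t)=F$ and $\lim_{t\to\infty}V(t)=V$ to replace the time-dependent coefficients of~\eqref{EQ:Fractional-order-vector-borne-disease-converted_model-1} by their limiting constant values, so that local stability can be assessed on the resulting autonomous system.

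Next I would compute the $3\times 3$ Jacobian at $\Psi_{0}=(N_{H},0,0)$. Because every infection term carries a factor $I_{V}$ or $I_{H}$ that vanishes at $\Psi_{0}$, the $S_{H}$-column decouples: the Jacobian is block lower-triangular, with a scalar block equal to $-\mu_{H}$ (from $\partial\dot S_{H}/\partial S_{H}$) and a $2\times 2$ block acting on $(I_{H},I_{V})$. A direct check shows that this $2\times 2$ block is exactly $F-V$ evaluated at $S_{H}^{*}=N_{H}$. Hence one eigenvalue is $-\mu_{H}<0$ and the remaining two are the eigenvalues of $F-V$, so local stability is governed entirely by $F-V$.

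For the $2\times 2$ block I would apply the Routh--Hurwitz criterion to its characteristic polynomial $\lambda^{2}-\mathrm{tr}(F-V)\,\lambda+\det(F-V)$. The trace $\mathrm{tr}(F-V)=-\big(\mu_{H}+\tfrac{\mu_{H}^{1-\beta}}{C^{\beta}}\big)-\mu_{V}$ is automatically negative, so the decisive quantity is $\det(F-V)=\big(\mu_{H}+\tfrac{\mu_{H}^{1-\beta}}{C^{\beta}}\big)\mu_{V}-F_{12}F_{21}$. The key algebraic identity to verify is that $\det(F-V)>0$ is equivalent to $\rho(FV^{-1})=R_{0}<1$: since $FV^{-1}$ is off-diagonal, its spectral radius satisfies $R_{0}^{2}=(FV^{-1})_{12}(FV^{-1})_{21}=F_{12}F_{21}/\big[(\mu_{H}+\tfrac{\mu_{H}^{1-\beta}}{C^{\beta}})\mu_{V}\big]$, whence $\det(F-V)>0\iff R_{0}<1$ and $\det(F-V)<0\iff R_{0}>1$. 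In the latter case the product of the two eigenvalues is negative, forcing one positive real eigenvalue and hence instability; in the former both eigenvalues have negative real part, and together with $-\mu_{H}$ this yields local asymptotic stability. Equivalently, one may quote Theorem~2 of~\cite{van2002reproduction} directly, after observing that $V$ is a nonsingular diagonal (hence $M$-) matrix and $F\geq 0$, so that the next-generation hypotheses hold.

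I expect the main obstacle to be not the linear algebra but the justification of working with the limiting autonomous system. The coefficients in~\eqref{EQ:Fractional-order-vector-borne-disease-converted_model-1} still depend on $t$ through the incomplete-gamma factors $\gamma(\mu t,\cdot)$ and the frozen points $\xi_{1},\xi_{2}$, and Theorem~\ref{Theorem-1} supplies only an asymptotic proportionality. I would therefore need to argue, via the theory of asymptotically autonomous systems, that the hyperbolic equilibrium of the limiting system dictates the local behaviour of the original one; this is the step that requires the most care, and it is where the $\approx$ relation of Theorem~\ref{Theorem-1} must be upgraded to a rigorous stability statement.
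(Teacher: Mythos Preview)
Your approach is correct in spirit but takes a genuinely different route from the paper. The paper does not pass to a limiting autonomous system and compute a Jacobian; instead it works directly with the time-dependent form~\eqref{EQ:Fractional-order-vector-borne-disease-converted_model-1}, converts the $\epsilon$-bounds of Theorem~\ref{Theorem-1} into inequalities~\eqref{EQ:Epsilon_Relation}--\eqref{EQ:Epsilon_Relation-2}, linearizes, substitutes the integral expression for $I_{V}$ into that for $I_{H}$, and then posits a trial solution $I_{H}(t)\propto e^{zt}$ in the style of~\cite{hethcote1995sis,wu2023global}. Assuming $\mathrm{Re}\,z\geq 0$ and bounding the resulting double integrals yields $1<R_{0}^{2}+O(\epsilon)$, a contradiction when $R_{0}<1$. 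Your Routh--Hurwitz argument on the limiting $2\times 2$ block $F-V$ is more transparent and, unlike the paper's proof, actually delivers the instability claim for $R_{0}>1$ via $\det(F-V)<0$; the paper's proof establishes only the stability half. Conversely, the paper's integral-inequality method keeps the $\epsilon$-terms explicit throughout rather than discarding them up front, so it does not need to invoke asymptotically autonomous theory---the step you correctly flag as the delicate one, since the $I_{V}(\xi_{2})$ and $I_{H}(\xi_{2})$ terms in~\eqref{EQ:Fractional-order-vector-borne-disease-converted_model-1} are memory terms, not merely time-varying coefficients, and the standard Markus--Thieme framework would need some care here. Both arguments ultimately rest on the approximation of Theorem~\ref{Theorem-1}; the paper trades your appeal to limiting-equation theory for the somewhat heuristic characteristic-exponent ansatz.
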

\begin{proof}
	Since, $\lim_{t \rightarrow \infty} \frac{\Gamma(1-\alpha)}{\gamma(\mu_{V}\xi_{1},1-\alpha)}=1$. Then for $\epsilon>0$  we have 
	\begin{eqnarray}~~\label{EQ:Epsilon_Relation}
		\frac{\Gamma(1-\alpha)}{\gamma(\mu_{V}\xi_{1},1-\alpha)}<1+\epsilon~\hspace{2cm} \forall t\geq t_{1}\\\nonumber
		e^{-\mu_{V}\xi_{1}} (\mu_{V}\xi_{1})^{-\alpha}<\epsilon~\hspace{2cm} \forall t\geq t_{2} \\\nonumber
		\frac{\gamma(\mu_{V} t,\alpha)}{\Gamma(\alpha)}<1+\epsilon~\hspace{2cm} \forall t\geq t_{3} \\\nonumber
		-\frac{\Gamma(1-\alpha)}{\gamma(\mu_{V}\xi_{1},1-\alpha)}<(\epsilon-1)~\hspace{2cm} \forall t\geq t_{4}.\\\nonumber
	\end{eqnarray}
	Then using~\eqref{EQ:Epsilon_Relation} and taking $T_{1}= max \left\lbrace t_{1},t_{2},t_{3},t_{4} \right\rbrace$ we have,
	\begin{align}~\label{EQ:Epsilon_Relation-1}
		e^{-\mu_{V} t}~_{RL}{D_{t}}^{1-\alpha}\left[I_{V}(t) e^{\mu_{V} t}\right] \leq& \mu_{V}^{1-\alpha} I_{V}(t) +\epsilon \left( \mu_{V}^{1-\alpha}I_{V}(t)-\frac{I_{V}(\xi_{2})}{\mu_{V}^{1-\alpha}\Gamma(1-\alpha)} \right) ~\hspace{2cm} \forall t\geq T_{1},\\\nonumber
	\end{align}
	similarly  we have, 
	\begin{eqnarray}~\label{EQ:Epsilon_Relation-2}
		\begin{array}{llll}
			\displaystyle	e^{-\mu_{H} t}~_{RL}{D_{t}}^{1-\beta}\left[I_{H}(t) e^{\mu_{H} t}\right] &\leq & \displaystyle \mu_{H}^{1-\beta} I_{H}(t) +\epsilon \left( \mu_{H}^{1-\beta}I_{H}(t)-\frac{I_{H}(\xi_{2})}{\mu_{H}^{1-\beta}\Gamma(1-\beta)} \right)~\hspace{2cm} \forall t\geq T_{2} ,\\  e^{-\mu_{H} t}~_{RL}{D_{t}}^{1-p}\left[I_{H}(t) e^{\mu_{H} t}\right] &\leq & \displaystyle \mu_{H}^{1-p} I_{H}(t)+\epsilon \left( \mu_{H}^{1-p}I_{H}(t)-\frac{I_{H}(\xi_{2})}{\mu_{H}^{1-p}\Gamma(1-p)} \right)~\hspace{2cm} \forall t\geq T_{3}.
		\end{array}
	\end{eqnarray}
	Then for $T= max \left\lbrace T_{1},T_{2},T_{3} \right\rbrace$ and $t\geq T$ and using the above relation in the equation~\eqref{EQ:Fractional-order-vector-borne-disease-converted_model-1} we have,
	\begin{eqnarray}
		\centering
		\begin{array}{llll}
			\displaystyle	\frac{dI_{H}}{dt}&\leq & \frac{b^{\alpha} \beta_{V}^{H}}{N_{H}} S_{H}(t)\left[  \mu_{V}^{1-\alpha}   I_{V}(t)+\epsilon \left( \mu_{V}^{1-\alpha}I_{V}(t)-\frac{I_{V}(\xi_{2})}{\mu_{V}^{1-\alpha}\Gamma(1-\alpha)}\right)\right]- \frac{\mu_{H}^{1-\beta} I_{H}(t)}{C^{\beta}}\\&+&\displaystyle \frac{\epsilon}{C^{\beta}} \left( \mu_{H}^{1-\beta}I_{H}(t)-\frac{I_{H}(\xi_{2})}{\mu_{H}^{1-\beta}\Gamma(1-\beta)} \right) -\mu_{H} I_{H},\\ 
			\displaystyle \frac{dI_{V}}{dt}&\leq& \displaystyle  \frac{b^{p}\beta_{H}^{V}}{N_{H} C^{p}} S_{V} \left[ \mu_{H}^{1-p} I_{H}(t)+\epsilon \left( \mu_{H}^{1-p}I_{H}(t)-\frac{1}{\mu_{H}^{1-p}\Gamma(1-p)} I_{H}(\xi_{2})\right) \right]  -\mu_{V} I_{V}.\\
		\end{array}
		\label{EQ:Modified_Fractional-order-model-1}
	\end{eqnarray}
	
during the analysis of local stability regarding the equilibrium without infection~$\Psi_{0}$, equation~(\eqref{EQ:Fractional-order-vector-borne-disease-converted_model-1}) is linearized around $\Psi_{0}$. Which gives,
	\begin{eqnarray}
		\begin{array}{llll}
			\displaystyle I_{H}(t)&\leq &  \displaystyle b^{\alpha} \beta_{V}^{H} \int_{0}^{t} e^{-K(t-s)}\left[  \mu_{V}^{1-\alpha}(s) I_{V}(s)+\epsilon \left( \mu_{V}^{1-\alpha}-\frac{1}{\mu_{V}^{1-\alpha}\Gamma(1-\alpha)} \right) I_{V}(s)\right]ds\\ &+& \displaystyle \frac{\epsilon}{C^{\beta}}\int_{0}^{t} e^{-K(t-s)}\left( \mu_{H}^{1-\beta}-\frac{1}{\mu_{H}^{1-\beta}\Gamma(1-\beta)} \right) I_{H}(s) ds,\\\\
			\displaystyle	I_{V}(t)&\leq& \displaystyle e^{-\mu_{V}t}I_{V}(0) +\frac{b^{p}\beta_{H}^{V}\Pi_{V}}{N_{H} C^{p}\mu_{V}}  \int_{0}^{t} e^{-\mu_{V}}(t-s)\left[ \mu_{H}^{1-p}I_{H}(s) +\epsilon \left( \mu_{H}^{1-p}-\frac{1}{\mu_{H}^{1-p}\Gamma(1-p)} \right)I_{H}(s) \right]ds .
		\end{array}
		\label{EQ:Modified_Fractional-order-model-2}
	\end{eqnarray} where,
	\begin{eqnarray}\label{EQ:Modified_Fractional-order-model-3}
		K=\mu_{H}+\frac{\mu_{H}^{1-\beta}}{C^{\beta}}.
	\end{eqnarray}
	Then using $I_{V}$ in the equation~(\ref{EQ:Modified_Fractional-order-model-2}) then we have,
	\begin{eqnarray}
		\centering
		\begin{array}{llll}
			\displaystyle I_{H }(t) & \leq & 	\displaystyle I_{H }(0)e^{-{K t}} + \epsilon b^{\alpha } \beta_{V}^{H}\left( \mu_{V}^{1-\alpha}-\frac{1}{\mu_{V}^{1-\alpha}\Gamma(1-\alpha)} \right)  I_{V}(0)\int_{0}^{t} e^{-K(t-s)} e^{-\mu_{V}s} ds \\&+& \displaystyle  \frac{b^{\alpha } \beta_{V}^{H}}{ \mu_{V}^{\alpha-1}  }I_{V}(0)\int_{0}^{t}      e^{-K(t-s)} e^{-\mu_{V}s} ds+\frac{b^{\alpha+p } \beta_{V}^{H}\beta_{H}^{V} \Pi_{V} }{N_{H} C^{p} \mu_{V}^{\alpha} \mu_{H}^{p-1} }\int_{0}^{t} e^{-K(t-s)} \int_{0}^{s} e^{-\mu_{V} ({s}-u)}  I_{H}(u)du \ d{s} \\&+&\displaystyle \epsilon \frac{b^{\alpha+p } \beta_{V}^{H}\beta_{H}^{V}  \Pi_{V} }{N_{H} C^{p} \mu_{V}^{\alpha} }\left( \mu_{H}^{1-p}-\frac{1}{\mu_{H}^{1-p}\Gamma(1-p)} \right) \int_{0}^{t} e^{-K(t-s)} \int_{0}^{s} e^{-\mu_{V} ({s}-u)}  I_{H}(u)du \ d{s}\\&+&\displaystyle \epsilon \frac{b^{\alpha+p } \beta_{V}^{H}\beta_{H}^{V}  \Pi_{V} }{N_{H} C^{p} \mu_{V}\mu_{H}^{p-1} }\left( \mu_{V}^{1-\alpha}-\frac{1}{\mu_{V}^{1-\alpha}\Gamma(1-\alpha)} \right) \int_{0}^{t} e^{-K(t-s)} \int_{0}^{s} e^{-\mu_{V} ({s}-u)}  I_{H}(u)du \ d{s}+o(\epsilon).
		\end{array}
		\label{EQ:Disease-free-1}
	\end{eqnarray}
	Following~\cite{hethcote1995sis,wu2023global} we assume that at the beginning of the disease process $I_{H}(t)\propto e^{zt}$, then we have,
	\begin{eqnarray}
		\centering
		\begin{array}{llll}
			\displaystyle 1 & \leq & \displaystyle I_{H }(0)e^{-{(K+z) t}} + \epsilon b^{\alpha } \beta_{V}^{H}\left( \mu_{V}^{1-\alpha}-\frac{1}{\mu_{V}^{1-\alpha}\Gamma(1-\alpha)} \right) e^{-{z t}} I_{V}(0)\int_{0}^{t} e^{-K(t-s)} e^{-\mu_{V}s} ds \\&+& \displaystyle  \frac{b^{\alpha } \beta_{V}^{H}}{ \mu_{V}^{\alpha-1}  }I_{V}(0)  e^{-{z t}}\int_{0}^{t}      e^{-K(t-s)} e^{-\mu_{V}s} ds+\frac{b^{\alpha+p } \beta_{V}^{H}\beta_{H}^{V}  \Pi_{V}}{N_{H} C^{p} \mu_{V}^{\alpha} \mu_{H}^{p-1} } e^{-{z t}}\int_{0}^{t} e^{-K(t-s)} \int_{0}^{s} e^{-\mu_{V} ({s}-u)}  e^{{z u}}du \ d{s} \\&+&\displaystyle \epsilon \frac{b^{\alpha+p } \beta_{V}^{H}\beta_{H}^{V} \Pi_{V}}{N_{H} C^{p} \mu_{V}^{\alpha} }\left( \mu_{H}^{1-p}-\frac{1}{\mu_{H}^{1-p}\Gamma(1-p)} \right) e^{-{z t}} \int_{0}^{t} e^{-K(t-s)} \int_{0}^{s} e^{-\mu_{V} ({s}-u)}   e^{{zu}}du \ d{s}\\&+&\displaystyle \epsilon \frac{b^{\alpha+p } \beta_{V}^{H}\beta_{H}^{V} \Pi_{V}}{N_{H} C^{p} \mu_{V}\mu_{H}^{p-1} }\left( \mu_{V}^{1-\alpha}-\frac{1}{\mu_{V}^{1-\alpha}\Gamma(1-\alpha)} \right) e^{-{z t}} \int_{0}^{t} e^{-K(t-s)} \int_{0}^{s} e^{-\mu_{V} ({s}-u)}  e^{{z u}} du \ d{s}+o(\epsilon).
		\end{array}
		\label{EQ:Disease-free-2}
	\end{eqnarray}
	Let us consider a scenario in which equation~\eqref{EQ:Disease-free-1} is supposed to have complex root $z=x+i y$, with the condition $x\geq 0$. Now, settings, ${s}-t = z_{1} ,$ $u- {s} = z_{2} $. Then we have,
	\begin{eqnarray}
		\centering
		\footnotesize
		\begin{array}{llll}
			\displaystyle 1 & \leq & 	\displaystyle I_{H }(0) e^{-{(K+x+iy) t}} + \epsilon b^{\alpha } \beta_{V}^{H}\left( \mu_{V}^{1-\alpha}-\frac{1}{\mu_{V}^{1-\alpha}\Gamma(1-\alpha)} \right) e^{-{(x+i y) t}} I_{V}(0)\int_{-t}^{0} e^{Kz_{1}} e^{\mu_{V}(t+z_{1})} dz_{1}\\&+& 
			\displaystyle  \frac{b^{\alpha } \beta_{V}^{H}}{ \mu_{V}^{\alpha-1}  }I_{V}(0)  e^{-{(x+i y) t}}\int_{-t}^{0}  e^{Kz_{1}} e^{\mu_{V}(t+z_{1})} dz_{1}\\&+&  \displaystyle \frac{b^{\alpha+p } \beta_{V}^{H}\beta_{H}^{V} \Pi_{V}}{N_{H} C^{p} \mu_{V}^{\alpha} \mu_{H}^{p-1} } e^{-{(x+i y) t}}\int_{-t}^{0} e^{Kz_{1}} \int_{-(t+z_{1})}^{0} e^{\mu_{V} z_{2}}  e^{{(x+i y) (t+z_{1}+z_{2})}}dz_{2} \ dz_{1} \\&+&\displaystyle \epsilon \frac{b^{\alpha+p } \beta_{V}^{H}\beta_{H}^{V} \Pi_{V}}{N_{H} C^{p} \mu_{V}^{\alpha} }\left( \mu_{H}^{1-p}-\frac{1}{\mu_{H}^{1-p}\Gamma(1-p)} \right) e^{-{z t}} \int_{-t}^{0} e^{Kz_{1}} \int_{-(t+z_{1})}^{0} e^{\mu_{V} z_{2}}  e^{{(x+iy) (t+z_{1}+z_{2})}}dz_{2} \ dz_{1} \\&+&\displaystyle \epsilon \frac{b^{\alpha+p } \beta_{V}^{H}\beta_{H}^{V} \Pi_{V}}{N_{H} C^{p} \mu_{V}\mu_{H}^{p-1} }\left( \mu_{V}^{1-\alpha}-\frac{1}{\mu_{V}^{1-\alpha}\Gamma(1-\alpha)} \right) e^{-{(x+i y) t}} \int_{-t}^{0} e^{Kz_{1}} \int_{-(t+z_{1})}^{0} e^{\mu_{V} z_{2}}  e^{{(x+i y) (t+z_{1}+z_{2})}}dz_{2} \ dz_{1} +o(\epsilon)
		\end{array}
		\label{EQ:Disease-free-3}
	\end{eqnarray}
Preassigned $\epsilon>0$, we assumed a sufficiently large value of $t>T$ such that $I_{H }(0)e^{-{(K+x+iy) t}}<\epsilon$, as, $e^{Kz_{1}}\leq1$ for $z_{1} \in [-t, 0]$, $I_{V}(0)e^{-{(x+iy) t}}<\epsilon$, and $e^{{(x+i y) (z_{1}+z_{2})}}<1$, since, $z_{2} \in [-(t+z_{1}), 0]$ and $z_{1}\in [-t, 0]$.
	Hence, the integral take the form as,
	\begin{eqnarray}
		\begin{array}{llll}
			\displaystyle	\int_{-t}^{0} e^{Kz_{1}} e^{\mu_{V}(t+z_{1})} dz_{1} \leq \frac{1}{\mu_{V}},\\
			\displaystyle	\int_{-t}^{0} e^{Kz_{1}}\int_{-(t+z_{1})}^{0} e^{\mu_{V} z_{2}}  e^{{(x+i y) (z_{1}+z_{2})}} dz_{2} dz_{1}\leq \frac{1}{K\mu_{V}}.
		\end{array}
		\label{EQ:Disease-free-4}
	\end{eqnarray}
	Then using~\eqref{EQ:Disease-free-3} in the equation~\eqref{EQ:Disease-free-4} the we have,
	\begin{eqnarray}
		\centering
		\begin{array}{llll}
			\displaystyle 1 & < & 
			\displaystyle  \epsilon + \frac{b^{\alpha } \beta_{V}^{H}}{ \mu_{V}^{\alpha}  }I_{V}(0) \epsilon +\frac{b^{\alpha+p } \beta_{V}^{H}\beta_{H}^{V}  \Pi_{V}}{N_{H} C^{p} \mu_{V}^{\alpha+1} \mu_{H}^{p-1}K } + \epsilon \frac{b^{\alpha+p } \beta_{V}^{H}\beta_{H}^{V} \Pi_{V}}{N_{H} C^{p} \mu_{V}^{\alpha+1} K}\left( \mu_{H}^{1-p}-\frac{1}{\mu_{H}^{1-p}\Gamma(1-p)} \right) \\&+&\displaystyle \epsilon \frac{b^{\alpha+p } \beta_{V}^{H}\beta_{H}^{V} \Pi_{V}}{N_{H} C^{p} \mu_{V}^{2}\mu_{H}^{p-1}K }\left( \mu_{V}^{1-\alpha}-\frac{1}{\mu_{V}^{1-\alpha}\Gamma(1-\alpha)} \right) +o(\epsilon)\\&<& \displaystyle \left[   1+\frac{b^{\alpha } \beta_{V}^{H}}{ \mu_{V}^{\alpha-1}  }I_{V}(0)+2\frac{b^{\alpha+p } \beta_{V}^{H}\beta_{H}^{V} \Pi_{V}}{N_{H} C^{p} \mu_{V}^{\alpha+1}\mu_{H}^{p-1} K}\right]  \epsilon+\frac{b^{\alpha+p } \beta_{V}^{H}\beta_{H}^{V} \Pi_{V}}{N_{H} C^{p} \mu_{V}^{\alpha+1} \mu_{H}^{p-1}K}\\&<& \displaystyle  \left[   1+\frac{b^{\alpha } \beta_{V}^{H}}{ \mu_{V}^{\alpha-1}  }I_{V}(0)+2R_{0}^{2}\right]  \epsilon+ R_{0}^{2}<1,
		\end{array}
		\label{EQ:Disease-free-5}
	\end{eqnarray} this produce a contradiction. This implies that each of the roots $z=x+i y$ of the equation~\eqref{EQ:Fractional-order-vector-borne-disease-model} have negative real parts i.e, $x<0$. Hence, the proof verifies local asymptotic stability of $\Psi_{0}$.
\end{proof}

\subsection{Global stability analysis of disease-free equilibrium point}
NFinally, we have demonstrated the model's~\eqref{EQ:Modified_Fractional-order-vector-borne-disease-model} disease-free equilibrium point is globally stable.

\begin{proposition}~\label{disease_free_global_prop-4}
If $R_{0} < 1$  $\Psi_{0}$ the equilibrium state of the dengue model~\eqref{EQ:Fractional-order-vector-borne-disease-model} exhibit globally asymptotically stable. Otherwise it is unstable.
\end{proposition}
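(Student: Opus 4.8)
The plan is to reduce the global analysis to a comparison argument for the infected subsystem $(I_{H},I_{V})$ and then to recover $S_{H}\to N_{H}$, exactly in the spirit of the local analysis. First I would invoke the asymptotic bounds already produced through Theorem~\ref{Theorem-1}: given $\epsilon>0$ there is a time $T$ such that for all $t\ge T$ the three tempered Riemann-Liouville terms are dominated by $\mu_{V}^{1-\alpha}I_{V}(t)$, $\mu_{H}^{1-\beta}I_{H}(t)$ and $\mu_{H}^{1-p}I_{H}(t)$ respectively, up to $O(\epsilon)$ corrections, as recorded in \eqref{EQ:Epsilon_Relation-1} and \eqref{EQ:Epsilon_Relation-2}. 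Feeding these into the infected equations yields the differential inequalities \eqref{EQ:Modified_Fractional-order-model-1}; bounding the human-susceptible factor by $S_{H}\le N_{H}$ and the vector factor by $\frac{\Pi_{V}}{\mu_{V}}-I_{V}\le\frac{\Pi_{V}}{\mu_{V}}$ (both legitimate on the invariant set $\mathbb{T}$ of Proposition~\ref{prop-3}) turns them into a linear cooperative comparison system of the form $\frac{dI_{H}}{dt}\le A_{1}I_{V}-K I_{H}+O(\epsilon)$ and $\frac{dI_{V}}{dt}\le A_{2}I_{H}-\mu_{V}I_{V}+O(\epsilon)$, with $A_{1}=b^{\alpha}\beta_{V}^{H}\mu_{V}^{1-\alpha}$, $A_{2}=\frac{b^{p}\beta_{H}^{V}\mu_{H}^{1-p}\Pi_{V}}{N_{H}C^{p}\mu_{V}}$, and $K$ as in \eqref{EQ:Modified_Fractional-order-model-3}.

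Next I would introduce a linear Lyapunov functional $L=c_{1}I_{H}+c_{2}I_{V}$ on the region $t\ge T$. Differentiating along the inequalities gives $\frac{dL}{dt}\le(c_{2}A_{2}-c_{1}K)I_{H}+(c_{1}A_{1}-c_{2}\mu_{V})I_{V}+O(\epsilon)$. The decisive algebraic identity is that $\frac{A_{1}A_{2}}{K\mu_{V}}$ equals exactly $R_{0}^{2}$, which follows by substituting $K=\mu_{H}+\frac{\mu_{H}^{1-\beta}}{C^{\beta}}$ and simplifying to match \eqref{Basic reproduction number}. Consequently, when $R_{0}<1$ the open interval $\left(\frac{A_{2}}{K},\frac{\mu_{V}}{A_{1}}\right)$ of admissible ratios $c_{1}/c_{2}$ is nonempty, and choosing $c_{1},c_{2}>0$ with $c_{1}/c_{2}$ strictly inside it forces both bracketed coefficients to be strictly negative. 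I would fix these constants first, and only then shrink $\epsilon$ so that the residual $O(\epsilon)$ term cannot overturn the strict negativity, obtaining $\frac{dL}{dt}\le-\delta(I_{H}+I_{V})$ for some $\delta>0$ and all large $t$.

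From this differential inequality $L(t)\to0$, hence $I_{H}(t),I_{V}(t)\to0$. Substituting this limit into the first equation of \eqref{EQ:Fractional-order-vector-borne-disease-converted_model} produces the asymptotically autonomous equation $\frac{dS_{H}}{dt}=\mu_{H}N_{H}-\mu_{H}S_{H}+o(1)$, so that $S_{H}(t)\to N_{H}$; thus every trajectory in $\mathbb{T}$ converges to $\Psi_{0}$, establishing global attractivity. Since Proposition~\ref{prop-4} already provides local asymptotic stability when $R_{0}<1$ and instability when $R_{0}>1$, combining the two conclusions delivers the claimed global asymptotic stability of $\Psi_{0}$.

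I expect the principal obstacle to be the careful handling of the non-autonomous, merely asymptotic character of Theorem~\ref{Theorem-1}: its estimates hold only for $t\ge T$ and carry $\epsilon$-dependent remainders involving the state evaluated at intermediate instants $\xi_{1},\xi_{2}$. The delicate point is to keep the Lyapunov derivative uniformly strictly negative \emph{after} committing to $c_{1},c_{2}$ drawn from the $R_{0}<1$ slack, rather than allowing $\epsilon$ and the weights to interact; the boundedness of every state variable on $\mathbb{T}$ (Proposition~\ref{prop-3}) is precisely what makes this two-stage choice of constants and then $\epsilon$ admissible, so that the intermediate-time terms $I_{H}(\xi_{2}),I_{V}(\xi_{2})$ remain controlled throughout.
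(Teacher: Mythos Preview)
Your argument is correct and takes a genuinely different route from the paper. The paper does not use a Lyapunov function at all: instead it integrates the differential inequalities \eqref{EQ:Modified_Fractional-order-model-1} to reach the iterated-integral inequality \eqref{EQ:eq1} for $I_{H}$ alone, sets $i_{0}=\limsup_{t\to\infty}I_{H}(t)$, bounds the double convolution $B_{2}$ by $\frac{i_{0}}{\mu_{V}K}+O(\epsilon)$ via a splitting of the integration domain, and obtains $I_{H}(t)\le R_{0}^{2}i_{0}+O(\epsilon)$; the contradiction $i_{0}<i_{0}$ when $R_{0}<1$ forces $i_{0}=0$. Your approach replaces this $\limsup$ self-map argument by the linear Lyapunov functional $L=c_{1}I_{H}+c_{2}I_{V}$, with the key observation that $A_{1}A_{2}/(K\mu_{V})=R_{0}^{2}$ opens the interval $(A_{2}/K,\mu_{V}/A_{1})$ of admissible weight ratios. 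This is the standard $FV^{-1}$-eigenvector trick for cooperative two-dimensional infected subsystems and is arguably more transparent; the paper's integral method, on the other hand, stays closer to the Volterra formulation that pervades the rest of the analysis and makes the factor $R_{0}^{2}$ appear without any choice of weights.

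One small tightening: your sentence ``obtaining $\frac{dL}{dt}\le -\delta(I_{H}+I_{V})$ for all large $t$'' overstates what the $\xi_{2}$-terms allow. Because those remainders involve $I_{H}(\xi_{2}),I_{V}(\xi_{2})$ and are only bounded (not small) on $\mathbb{T}$, what you actually get is $\frac{dL}{dt}\le -\delta' L + C\epsilon$ with $\delta'$ independent of $\epsilon$, hence $\limsup_{t\to\infty}L(t)\le C\epsilon/\delta'$; letting $\epsilon\downarrow 0$ then gives $L\to 0$. You already flag exactly this issue in your final paragraph, so it is a matter of phrasing rather than a gap.
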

Let use the equation~\eqref{EQ:Disease-free-1} we have,
\begin{eqnarray}
	\centering
	\begin{array}{llll}
		\displaystyle I_{H }(t) & \leq & 
		\displaystyle I_{H }(0)e^{-{K t}} + \epsilon b^{\alpha } \beta_{V}^{H}\left( \mu_{V}^{1-\alpha}-\frac{1}{\mu_{V}^{1-\alpha}\Gamma(1-\alpha)} \right)  I_{V}(0)\int_{0}^{t} e^{-K(t-t_{1})} e^{-\mu_{V}t_{1}} dt_{1} \\&+& \displaystyle  \frac{b^{\alpha } \beta_{V}^{H}}{ \mu_{V}^{\alpha-1}  }I_{V}(0)\int_{0}^{t}   e^{-K(t-t_{1})} e^{-\mu_{V}t_{1}} dt_{1}+\frac{b^{\alpha+p } \beta_{V}^{H}\beta_{H}^{V} \Pi_{V}}{N_{H} C^{p} \mu_{V}^{\alpha} \mu_{H}^{p-1} }\int_{0}^{t} e^{-K(t-t_{1})} \int_{0}^{t_{1}} e^{-\mu_{V} ({t_{1}}-s)}  I_{H}(s)ds \ d{t_{1}} \\&+&\displaystyle \epsilon \frac{b^{\alpha+p } \beta_{V}^{H}\beta_{H}^{V} \Pi_{V} }{N_{H} C^{p} \mu_{V}^{\alpha} }\left( \mu_{H}^{1-p}-\frac{1}{\mu_{H}^{1-p}\Gamma(1-p)} \right) \int_{0}^{t} e^{-K(t-t_{1})} \int_{0}^{t_{1}} e^{-\mu_{V} ({t_{1}}-s)}  I_{H}(s)ds \ d{t_{1}}\\&+& \displaystyle \epsilon \frac{b^{\alpha+p } \beta_{V}^{H}\beta_{H}^{V} \Pi_{V}}{N_{H} C^{p} \mu_{V}\mu_{H}^{p-1} }\left( \mu_{V}^{1-\alpha}-\frac{1}{\mu_{V}^{1-\alpha}\Gamma(1-\alpha)} \right) \int_{0}^{t} e^{-K(t-t_{1})} \int_{0}^{t_{1}} e^{-\mu_{V} (t_{1}-s)}  I_{H}(s)ds \ d{t_{1}}.
	\end{array}
	\label{EQ:eq1}
\end{eqnarray}

Let us define,
\begin{eqnarray}
	\small
	\centering
	\begin{array}{llll}
		\displaystyle B_{1} = \int_{0}^{t} e^{-K(t-t_{1})} e^{-\mu_{V}t_{1}} d{t_{1}}.
	\end{array}
	\label{EQ:eq2}
\end{eqnarray} and
\begin{eqnarray}
	\small
	\centering
	\begin{array}{llll}
		\displaystyle B_{2}  =  \int_{0}^{t} e^{-K(t-t_{1})} \int_{0}^{t_{1}} e^{-\mu_{V} ({t_{1}}-s)}  I_{H}(s)ds \ d{t_{1}}.
	\end{array}
	\label{EQ:eq3}
\end{eqnarray} 
Now let us assume that    $\lim_{t\to\infty} sup  I_{H}(t) = i_{0}$.

Now from equation~(\ref{EQ:eq2}) we have
\begin{eqnarray}
	\small
	\centering
	\begin{array}{llll}
		\displaystyle B_{1}  &\leq &  \displaystyle \epsilon \int_{0}^{t} e^{-K(t-t_{1})}  d{t_{1}} \\\\ &\leq & \displaystyle\frac{ \epsilon}{K}.
	\end{array}
\end{eqnarray}
According to the definition of $lim sup$, preassigned $\epsilon > 0$ $\exists u $ such that $ I_{H}(t) < i_{0}  + \epsilon$, $ \forall t >u$.
Let us proceed with the transformation,
$ t - {t_{1}} = z_{1} $ $ {t_{1}}-s = z_{2} $ we have
$\exists t_{2}$ such that $\lim_{t\to\infty}$ $ \int_{t_{2}}^{t}  e^{-K z_{1}} \int_{t^*-z_{1}}^{t-z_{1}} e^{-\mu_{V} z_{2}} \ dz_{2} \ dz_{1} < $ $\epsilon$,$\forall t> t_{2}$ and $\forall$$\epsilon$ $> 0 $.
Suppose we take any $ t^{*} > t_{2}$ and now let us evaluate the integral we have, 

\begin{eqnarray}
	\small
	\centering
	\begin{array}{llll}
		\displaystyle B_{2} & \leq &  \displaystyle \int_{0}^{t^{*}} e^{-K z_{1}} \int_{0}^{t^*-z_{1}}e^{-\mu_{V} z_{2}} \hspace{0.1cm} I_{H}(t-z_{1}-z_{2}) \ dz_{2} \ dz_{1}, \\\\& + &  \displaystyle  \int_{t_{*}}^{t} e^{-K z_{1}} \int_{t^*-z_{1}}^{t-z_{1}} e^{-\mu_{V} z_{2}}  I_{H}(t-z_{1}-z_{2}) \ dz_{2} \ dz_{1}.
	\end{array}
	\label{EQ:eq4}
\end{eqnarray} As $t$ becomes sufficiently, we obtain
$ t-z_{1}-z_{2} \geq t-t^{*} > u $. 
Thus, $ I_{H}(t-z_{1}-z_{2}) < i_{0}+\epsilon$.
Therefore from equation~\eqref{EQ:eq4} we have 
\begin{eqnarray}
	\small
	\centering
	\begin{array}{llll}
		\displaystyle  B_{2} & \leq &  \displaystyle \int_{0}^{t^{*}} e^{-K z_{1}} \int_{0}^{t^*-z_{1}}e^{-\mu_{V} z_{2}} \hspace{0.1cm} \ (i_{0}+\epsilon) \ dz_{2} \ dz_{1} \\& + &  \displaystyle N_{H} \int_{t_{*}}^{t} e^{-K z_{1}} \int_{t^*-z_{1}}^{t-z_{1}} e^{-\mu_{V} z_{2}} \ dz_{2} \ dz_{1},~~\text{since}~{} I_{H}\leq N_{H} \\& \leq & \displaystyle \frac{(i_0 +\epsilon)}{\mu_{V} K} + N_{H} \epsilon ,\\ & = & \displaystyle  \frac{i_0 }{\mu_{V} K} + \epsilon \left( N_{H}+ \frac{1}{\mu_{V} K}\right).
	\end{array}
	\label{EQ:B1}
\end{eqnarray}\\

Then form equation~\eqref{EQ:eq1} we have 
\begin{eqnarray}
	\small
	\centering
	\begin{array}{llll}
		\displaystyle I_{H}(t) &\leq & \displaystyle I_{H}(0)e^{-{K t}} + \frac{b^{\alpha } \beta_{V}^{H}}{K\mu_{V}^{\alpha-1}  }I_{V}(0) \epsilon+\frac{b^{\alpha+p } \beta_{V}^{H}\beta_{H}^{V} \Pi_{V}}{N_{H} C^{p} \mu_{V}^{\alpha+1} \mu_{H}^{p-1}K }i_0 + \epsilon \frac{b^{\alpha+p } \beta_{V}^{H}\beta_{H}^{V} \Pi_{V}}{N_{H} C^{p} \mu_{V}^{\alpha} \mu_{H}^{p-1} }\left( N_{H}+ \frac{1}{\mu_{V} K}\right)\\&+&\displaystyle \epsilon \frac{b^{\alpha+p } \beta_{V}^{H}\beta_{H}^{V} \Pi_{V}}{N_{H} C^{p} \mu_{V}^{\alpha} }\left( \mu_{H}^{1-p}-\frac{1}{\mu_{H}^{1-p}\Gamma(1-p)} \right) \frac{i_0 }{\mu_{V} K} \\&+&\displaystyle \epsilon \frac{b^{\alpha+p } \beta_{V}^{H}\beta_{H}^{V} \Pi_{V}}{N_{H} C^{p} \mu_{V}\mu_{H}^{p-1} }\left( \mu_{V}^{1-\alpha}-\frac{1}{\mu_{V}^{1-\alpha}\Gamma(1-\alpha)} \right)\frac{i_0 }{\mu_{V} K} +o(\epsilon),\\ & \leq &  \displaystyle \left[I_{H}(0)+ \frac{b^{\alpha } \beta_{V}^{H}}{K\mu_{V}^{\alpha-1}  }I_{V}(0)+\frac{b^{\alpha+p } \beta_{V}^{H}\beta_{H}^{V}\Pi_{V}}{ C^{p} \mu_{V}^{\alpha} \mu_{H}^{p-1} }+R_{0}^{2}(i_{0}+1)\right] \epsilon +R_{0}^{2}i_{0}.
	\end{array}
\end{eqnarray} If $ R_{0} < 1 $ then we select the value of $\epsilon$ so that $I_{H}(t)$ $<$ $ i_{0} $ , $\forall t>0 $ .
Therefore $\lim_{t \rightarrow \infty}$ $sup I_{H}(t)<i_{0} $,
but   $\lim_{t \rightarrow \infty}$ $supI_{H}(t) =i_{0}$. Hence, $\lim_{t \rightarrow \infty}$$I_{H}(t)=0$.
Similarly, $S_{H}(t) =0$ and
$ I_{V}(t)=0$ as $t \rightarrow \infty,$ since $I_{H}(t) = 0 $ as $t \rightarrow \infty $. $ \Box$

\subsection{Endemic Equilibrium point and it's stability}
Let $\Psi_{*}=\left(S_{H}^{*}, I_{H}^{*}, I_{V}^{*}\right)$ denotes the equilibrium point regarding the modified fractional-order dengue model~\eqref{EQ:Fractional-order-vector-borne-disease-converted_model}. Now, with the aid of Theorem-\eqref{Theorem-1} the fractional-order terms in~\eqref{EQ:Fractional-order-vector-borne-disease-converted_model} are converted to,
\begin{eqnarray}
	\centering
	\begin{array}{llll}
		\displaystyle	\lim_{t\to\infty}\left\lbrace e^{-\mu_{V} t}~ _{RL}{D_{t}}^{1-\alpha}\left[I_{V}(t) e^{\mu_{V} t} \right]\right\rbrace = \mu_{V}^{1-\alpha} I_{V}^{*},\\ 
		\displaystyle	\lim_{t\to\infty}\left\lbrace e^{-\mu_{H} t} ~_{RL}D_{t}^{1-\beta}\left[I_{H}(t) e^{\mu_{H} t}\right]\right\rbrace =\mu_{H}^{1-\beta} I_{H}^{*},\\
		\displaystyle	\lim_{t\to\infty}\left\lbrace e^{-\mu_{H} t} ~_{RL}D_{t}^{1-p}\left[I_{H}(t) e^{\mu_{H} t}\right]\right\rbrace= \mu_{H}^{1-p} I_{H}^{*}.
	\end{array}
	\label{EQ:Endemic_point-1}
\end{eqnarray}
Thus the system~\eqref{EQ:Fractional-order-vector-borne-disease-converted_model}  becomes,
\begin{eqnarray}
	\centering
	\begin{array}{llll}
		\displaystyle	\mu_{H} N_{H} - \frac{b^{\alpha}}{N_{H}} \beta_{V}^{H} S_{H}^{*}(t) \mu_{V}^{1-\alpha} I_{V}^{*} -\mu_{H} S_{H}^{*}=0,\\ 
		\displaystyle \frac{b^{\alpha}}{N_{H}} \beta_{V}^{H} S_{H}^{*}(t) \mu_{V}^{1-\alpha} I_{V}^{*} -  \frac{1}{C^{\beta}}\mu_{H}^{1-\beta} I_{H}^{*}  -\mu_{H} I_{H}^{*}=0,\\ 
		\displaystyle  \frac{b^{p}}{N_{H} C^{p}} \beta_{H}^{V} \left( \frac{\Pi_{V}}{\mu_{V}}-I_{V}\right) \mu_{H}^{1-p} I_{H}^{*}  -\mu_{V} I_{V}^{*}=0.  
	\end{array}
	\label{EQ:Endemic_point-2}
\end{eqnarray}
Now from~\eqref{EQ:Endemic_point-2} we have,
\begin{eqnarray}
	\centering
	\begin{array}{llll}
		\displaystyle	S_{H}^{*} &= &  \displaystyle\frac{\mu_{H} N_{H}}{\frac{b^{\alpha}\mu_{V}^{1-\alpha}\beta_{V}^{H}}{N_{H}}I_{V}^{*}+\mu_{H}} \\
		\displaystyle	I_{H}^{*} &= &  \displaystyle \frac{ b^{\alpha}\beta_{V}^{H}\mu_{V}^{1-\alpha}\mu_{H}^{\beta}C^{\beta}} {\mu_{H}(\mu_{H}^{\beta}C^{\beta}+1)N_{H}}S_{H}^{*} I_{V}^{*}\\ &=& \displaystyle \frac{ b^{\alpha}\beta_{V}^{H} \mu_{V}^{1-\alpha}\mu_{H}^{\beta}C^{\beta}} {\left( \frac{b^{\alpha}\mu_{V}^{1-\alpha}\beta_{V}^{H}}{N_{H}}I_{V}^{*}+\mu_{H}\right)\left(\mu_{H}^{\beta}C^{\beta}+1\right) }I_{V}^{*}\\
		\displaystyle	I_{V}^{*} &=& \displaystyle  \frac{ b^{\alpha+p}\beta_{V}^{H} \beta_{H}^{V}\mu_{V}^{1-\alpha} \mu_{H}^{\beta}C^{\beta}} {N_{H}\mu_{V} C^{p} \left(  \frac{b^{\alpha}\mu_{V}^{1-\alpha}\beta_{V}^{H}}{N_{H}}I_{V}^{*}+\mu_{H}\right) \left( \mu_{H}^{\beta}C^{\beta}+1\right)\mu_{H}^{p-1}}(\frac{\Pi_{V}}{\mu_{V}}-I_{V}^{*})I_{V}^{*}.
	\end{array}
	\label{EQ:Endemic_point-3}
\end{eqnarray}
Now, one solution is $I_{V}^{*}= 0$, which provides the diseases free equilibrium $(N_{H},0,0)$. In case $I_{V}^{*}\neq 0$, then we have,
\begin{eqnarray}\label{EQ:Endemic_point-4}
	I_{V}^{*} &= & \displaystyle\frac{\mu_{H}}{A} \left( R_{0}^{2}-1\right),
\end{eqnarray}
where, 
\begin{eqnarray}
	\centering
	\begin{array}{llll}
		\displaystyle	A &=&  \displaystyle \frac{\mu_{V}^{1-\alpha} b^{\alpha} \beta_{V}^{H}}{N_{H}}+\frac{b^{\alpha+p} \beta_{V}^{H} \beta_{H}^{V} \mu_{H}^{\beta}C^{\beta}}{ N_{H}C^{p} \mu_{V}^{\alpha} \mu_{H}^{p-1} \left(\mu_{H}^{\beta}C^{\beta}+1\right)}.\\
	\end{array}
\end{eqnarray} and $R_{0}$ defined in~\eqref{Basic reproduction number}.
Thus, $I_{V}^{*}> 0$, $S_{H}^{*}>  0$, $I_{H}^{*}>  0$ holds if and only if $R_{0}>1$. In the light of this, the system admits a unique positive endemic equilibrium point if and only if the threshold $R_{0}>1$, also known as the basic reproduction number.
\begin{proposition}~\label{prop-5}
As long as $R_{0}> 1$, the endemic equilibrium point $\Psi_{*}$ of model~\eqref{EQ:Fractional-order-vector-borne-disease-converted_model} remains stable in an asymptotic sense.	
\end{proposition}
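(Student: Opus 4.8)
The plan is to use Theorem~\eqref{Theorem-1} to replace the non-autonomous fractional system~\eqref{EQ:Fractional-order-vector-borne-disease-converted_model} by its limiting autonomous form, and then prove local asymptotic stability of $\Psi_{*}$ by linearization together with the Routh--Hurwitz criterion. By the three limit relations in~\eqref{EQ:Endemic_point-1} the fractional derivatives collapse, as $t\to\infty$, to $\mu_{V}^{1-\alpha}I_{V}$, $\mu_{H}^{1-\beta}I_{H}$ and $\mu_{H}^{1-p}I_{H}$, so the dynamics reduce to
\begin{eqnarray}
\begin{array}{rcl}
\displaystyle \frac{dS_{H}}{dt} &=& \displaystyle \mu_{H}N_{H} - a\,S_{H}I_{V} - \mu_{H}S_{H},\\
\displaystyle \frac{dI_{H}}{dt} &=& \displaystyle a\,S_{H}I_{V} - K\,I_{H},\\
\displaystyle \frac{dI_{V}}{dt} &=& \displaystyle c\left(\frac{\Pi_{V}}{\mu_{V}}-I_{V}\right)I_{H} - \mu_{V}I_{V},
\end{array}
\end{eqnarray}
with $a=\frac{b^{\alpha}\beta_{V}^{H}\mu_{V}^{1-\alpha}}{N_{H}}$, $c=\frac{b^{p}\beta_{H}^{V}\mu_{H}^{1-p}}{N_{H}C^{p}}$ and $K=\mu_{H}+\frac{\mu_{H}^{1-\beta}}{C^{\beta}}$ as in~\eqref{EQ:Modified_Fractional-order-model-3}. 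All equilibrium identities in~\eqref{EQ:Endemic_point-3} and the existence condition $R_{0}>1$ from~\eqref{EQ:Endemic_point-4} carry over verbatim to this limiting system.

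First I would linearize at $\Psi_{*}=(S_{H}^{*},I_{H}^{*},I_{V}^{*})$, obtaining the Jacobian
\begin{eqnarray}
J^{*} &=& \left(
\begin{array}{ccc}
-aI_{V}^{*}-\mu_{H} & 0 & -aS_{H}^{*}\\
aI_{V}^{*} & -K & aS_{H}^{*}\\
0 & c\left(\frac{\Pi_{V}}{\mu_{V}}-I_{V}^{*}\right) & -cI_{H}^{*}-\mu_{V}
\end{array}
\right),
\end{eqnarray}
whose characteristic equation $\det(J^{*}-\lambda\mathbb{I})=0$ is a cubic $\lambda^{3}+A_{1}\lambda^{2}+A_{2}\lambda+A_{3}=0$ with $A_{1}=-\operatorname{tr}J^{*}$, $A_{2}$ the sum of the principal $2\times 2$ minors, and $A_{3}=-\det J^{*}$.

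\textbf{Routh--Hurwitz step.} By the Routh--Hurwitz criterion for cubics, $\Psi_{*}$ is locally asymptotically stable exactly when $A_{1}>0$, $A_{3}>0$ and $A_{1}A_{2}-A_{3}>0$. Here $A_{1}>0$ is immediate because every diagonal entry of $J^{*}$ is strictly negative, using $\frac{\Pi_{V}}{\mu_{V}}-I_{V}^{*}>0$ (valid on the invariant region $\mathbb{T}$ of~\eqref{EQ:invariant-bounded-domain-2}). To obtain $A_{3}>0$ I would substitute the equilibrium balances $a\,S_{H}^{*}I_{V}^{*}=K\,I_{H}^{*}$ and $c\left(\frac{\Pi_{V}}{\mu_{V}}-I_{V}^{*}\right)I_{H}^{*}=\mu_{V}I_{V}^{*}$; these collapse the destabilising cross product $aS_{H}^{*}\cdot c\left(\frac{\Pi_{V}}{\mu_{V}}-I_{V}^{*}\right)$ to the clean value $K\mu_{V}$, whence $\det J^{*}=-(aI_{V}^{*}+\mu_{H})KcI_{H}^{*}-Ka\mu_{V}I_{V}^{*}<0$ and $A_{3}>0$.

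The hard part will be verifying $A_{1}A_{2}-A_{3}>0$. Expanding this quantity generates many cross terms, and the intended route is to eliminate $S_{H}^{*}$ and $I_{H}^{*}$ in favour of $I_{V}^{*}$ through~\eqref{EQ:Endemic_point-3}, reducing the inequality to a polynomial in $I_{V}^{*}$ whose coefficients are manifestly positive once $R_{0}>1$, equivalently $I_{V}^{*}>0$ from~\eqref{EQ:Endemic_point-4}, is invoked. A cleaner alternative, which would upgrade the conclusion to global asymptotic stability on $\mathbb{T}$, is to build a Volterra-type Lyapunov function $V=\sum_{i}d_{i}\left(x_{i}-x_{i}^{*}-x_{i}^{*}\ln\frac{x_{i}}{x_{i}^{*}}\right)$, tune the weights $d_{i}$ to annihilate the indefinite cross terms in $\dot{V}$, and close the argument with LaSalle's invariance principle.
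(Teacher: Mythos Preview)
Your strategy and the paper's both terminate in the same cubic characteristic polynomial and the same Routh--Hurwitz check, but the routes to that cubic differ. You linearize the \emph{limiting autonomous ODE} obtained by replacing each tempered fractional term via Theorem~\ref{Theorem-1}; the paper instead keeps the original non-autonomous system~\eqref{EQ:Fractional-order-vector-borne-disease-converted_model}, rewrites it as a convolution-type Volterra integral system, perturbs around $\Psi_{*}$, and invokes the Hethcote--Miller linearization lemma (Lemma~\ref{lemma-3}) to obtain the characteristic determinant. The payoff of the paper's route is that it addresses the non-autonomous structure directly: local asymptotic stability of $\Psi_{*}$ for your limiting ODE does not, without a further asymptotically-autonomous-systems argument (which you do not supply), imply local asymptotic stability for the actual fractional system. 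Your Jacobian is, however, exactly what the paper uses later for the global result (Proposition~\ref{prop-6}), so your computation is not wasted.

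One concrete gap remains in your outline: you leave $A_{1}A_{2}-A_{3}>0$ as a plan. The paper does not eliminate variables in favour of $I_{V}^{*}$ as you suggest; it simply expands $A_{1}B_{1}-C_{1}$ and regroups so that every summand is a product of positive factors, using only $I_{H}^{*},I_{V}^{*}>0$ and never the explicit formula~\eqref{EQ:Endemic_point-4}. That expansion is short and mechanical, so your proposed substitution is workable but more laborious than necessary. Your Lyapunov alternative is not what the paper does either: the global proof in Proposition~\ref{prop-6} uses the Li--Muldowney second-compound-matrix/geometric approach rather than a Volterra-type Lyapunov function.
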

\begin{eqnarray}
	\centering
	\begin{array}{llll}
		\displaystyle \frac{dS_{H}}{dt} &=& \displaystyle  \mu_{H} N_{H} - \frac{b^{\alpha}}{N_{H}} \beta_{V}^{H} S_{H}(t) X(t) -\mu_{H} S_{H},\\
		\displaystyle	\frac{dI_{H}}{dt}&= & \displaystyle \frac{b^{\alpha}}{N_{H}} \beta_{V}^{H} S_{H}(t) X(t)- \frac{1}{C^{\beta}}Y(t) -\mu_{H} I_{H},\\  
		\displaystyle \frac{dI_{V}}{dt} &= & \displaystyle \frac{b^{p}}{N_{H} C^{p}} \beta_{H}^{V} \left(\frac{\Pi_{V}}{\mu_{V}}-I_{V}\right) Z(t) -\mu_{V} I_{V}.
	\end{array}
	\label{EQ:Fractional-order_borne-Converted-model}
\end{eqnarray} Where,
\begin{eqnarray}
		\centering
	\begin{array}{llll}
\mathbb{X}(t)=e^{-\mu_{V} t}~_{RL}{D_{t}}^{1-\alpha}\left[I_{V}(t) e^{\mu_{V} t}\right],\\
\mathbb{Y}(t) =e^{-\mu_{H} t}~_{RL}{D_{t}}^{1-\beta}\left[I_{H}(t) e^{\mu_{H} t}\right],\\
\mathbb{Z}(t)= e^{-\mu_{H} t}~_{RL}{D_{t}}^{1-p}\left[I_{H}(t) e^{\mu_{H} t}\right].
\end{array}
\label{EQ:Relation-fractional_derivative}
\end{eqnarray}
As a result, we infer that,
\begin{eqnarray}
	\centering
	\begin{array}{llll}
		\displaystyle	S_{H}(t)&=& \displaystyle S_{H}(0)e^{- \mu_{H} t}+ N_{H}(1-e^{-\mu_{H} t}) - \frac{b^{\alpha} \beta_{V}^{H} }{N_{H} }\int_{0}^{t} e^{-\mu_{H}(t-s)} S_{H}(s) \mathbb{X}(s) ds,\\
		\displaystyle	I_{H}(t) &=&  \displaystyle I_{H}(0)e^{-K t} +\frac{b^{\alpha} \beta_{V}^{H}}{N_{H}} \int_{0}^{t}  e^{-K (t-s)}S_{H}(s) \mathbb{X}(s) ds -\frac{\mu_{H}^{1-\beta}}{C^{\beta} } \int_{0}^{t}e^{-K (t-s)}\mathbb{Y}(s) ds \\&+& \displaystyle \int_{0}^{t}e^{-K (t-s)}I_{H}(s) ds,\\
		\displaystyle	I_{V}(t)&=& \displaystyle  I_{V}(0)e^{- \mu_{V} t}+\frac{b^{p} \beta_{H}^{V} }{N_{H}C^{p} }\int_{0}^{t} e^{-\mu_{V}(t-s)} \left(\frac{\Pi_{V}}{\mu_{V}}-I_{V}(s)\right) \mathbb{Z}(s) ds,\\
	\end{array}
	\label{EQ:Volterra-integral_1}
\end{eqnarray} where, $K=\mu_{H}+\frac{\mu_{H}^{1-\beta}}{C^{\beta} }$.

Now, we use following perturbation, $S_{H}(t)=S_1(t) + S_{H}^*$, $I_{H}(t)=I_1(t) + I_{H}^*$, $I_{V}(t)=I_2(t) + I_{V}^*$, $\mathbb{X}(t)=X_{1}(t)+X^* $,  $\mathbb{Y}(t)=Y_{1}(t)+Y^* $,  $\mathbb{Z}(t)=Z_{1}(t)+Z^* $, where,  
$\displaystyle X^{*} = \displaystyle \mu_{V}^{1-\alpha} I_{V}^{*}$. $Y^{*}=\mu_{H}^{1-\beta} I_{H}^{*}$
and  $Z^{*}=\mu_{H}^{1-p} I_{H}^{*}$
then the system~(\ref{EQ:Volterra-integral_1}) can be written as:
\begin{eqnarray}
	\centering
	\begin{array}{llll}
		\displaystyle	S_{1}(t)&=& \displaystyle  S_{H}(0)e^{- \mu_{H} t}+ N_{H}(1-e^{-\mu_{H} t}) - \frac{b^{\alpha} \beta_{V}^{H} }{N_{H} }\int_{0}^{t} e^{-\mu_{H}(t-s)}\left(S_{1}(s) + S_{H}^*\right) \left(X_{1}(s)+X^*\right) ds-S_{H}^{*},\\
		\displaystyle	I_{1}(t)&=& \displaystyle I_{H}(0)e^{-K t} +\frac{b^{\alpha} \beta_{V}^{H}}{N_{H}} \int_{0}^{t}  e^{-K (t-s)}\left(S_{1}(s) + S_{H}^*\right) \left(X_{1}(s)+X^*\right) ds \\ &-& \displaystyle \frac{\mu_{H}^{1-\beta}}{C^{\beta} } \int_{0}^{t}e^{-K (t-s)} \left(Y_{1}(s)+Y^*\right) ds+ \int_{0}^{t}e^{-K (t-s)}\left(I_{1}(s) + I_{H}^*\right)ds-I_{H}^{*},\\
		
		\displaystyle	I_{2}(t)&=& \displaystyle  I_{V}(0)e^{- \mu_{V} t}+\frac{b^{p} \beta_{H}^{V} }{N_{H}C^{p} }\int_{0}^{t} e^{-\mu_{V}(t-s)} \left(\frac{\Pi_{V}}{\mu_{V}}-\left(I_{2}(s) + I_{V}^*\right)\right) \left(Z_{1}(s)+Z^*\right) ds-I_{V}^{*},\\
		
		\displaystyle	X_{1}(t)&=& \displaystyle -X^{*}+\frac{\Gamma(1-\alpha)}{\gamma(\mu_{V}\xi_{1},1-\alpha)}\mu_{V}^{1-\alpha} (I_2(t) + I_{V}^*)- \frac{e^{-\mu_{V}\xi_{1}}(\mu_{V}\xi_{1})^{-\alpha}}{\mu_{V}^{1-\alpha}} \frac{\gamma(\mu_{V}t,\alpha)}{\gamma(\mu_{V}\xi_{1},\alpha) \Gamma(\alpha)} (I_2(t) + I_{V}^*),\\
		
		\displaystyle	Y_{1}(t)&=& \displaystyle -Y^{*}+\frac{\Gamma(1-\beta)}{\gamma(\mu_{H}\xi_{1},1-\beta)}\mu_{H}^{1-\beta} (I_1(t) + I_{H}^*)- \frac{e^{-\mu_{H}\xi_{1}}(\mu_{H}\xi_{1})^{-\beta}}{\mu_{H}^{1-\beta}} \frac{\gamma(\mu_{H}t,\beta)}{\gamma(\mu_{H}\xi_{1},\beta) \Gamma(\beta)} (I_1(t) + I_{H}^*),\\
		
		\displaystyle	Z_{1}(t)&=&\displaystyle -Z^{*}+\frac{\Gamma(1-p)}{\gamma(\mu_{H}\xi_{1},1-p)}\mu_{H}^{1-p} (I_1(t) + I_{H}^*)- \frac{e^{-\mu_{H}\xi_{1}}(\mu_{H}\xi_{1})^{-p}}{\mu_{H}^{1-p}} \frac{\gamma(\mu_{H}t,p)}{\gamma(\mu_{H}\xi_{1},p) \Gamma(p)} (I_1(t) + I_{H}^*).\\
	\end{array}
	\label{EQ:Volterra-integral_2}
\end{eqnarray}
Since, $\lim_{t \rightarrow \infty} \frac{\Gamma(1-\alpha)}{\gamma(\mu_{V}\xi_{1},1-\alpha)}=1$, and $\lim_{t \rightarrow \infty}$ $e^{-\mu_{V}\xi_{1}} (\mu_{V}\xi_{1})^{-\alpha}=0$. Hence, $\lim_{t\to\infty}(X_{1}(t)+X^*)=\mu_{V}^{1-\alpha}(I_2(t) + I_{V}^*)+o(\epsilon)$, similarly, $\lim_{t\to\infty}(Y_{1}(t)+Y^*)=\mu_{H}^{1-\beta}(I_1(t) + I_{H}^*)+o(\epsilon)$, and $\lim_{t\to\infty}(Z_{1}(t)+Z^*)=\mu_{H}^{1-p}(I_1(t) + I_{H}^*)+o(\epsilon)$ for $\epsilon>0$,
thus, the system~\eqref{EQ:Volterra-integral_2} converted to:
\begin{eqnarray}
	\begin{array}{llll}
		\displaystyle \mathbb{Q}(t) &=& \displaystyle \mathbb{F}(t) + \int_0^t A(t-\tau) \mathbb{G}(\mathbb{Q}(s))ds
	\end{array}
	\label{EQ:Volterra-1}
\end{eqnarray}where,

\begin{eqnarray}
	% \nonumber to remove numbering (before each equation)
	\displaystyle \mathbb{Q}(t)&=& \displaystyle \left(
	\begin{array}{c}
		S_1(t) \\
		I_1(t) \\
		I_2(t) \\
	\end{array}
	\right),
\end{eqnarray}
and, 
\begin{align}~\label{EQ:Volterra-2}
	\footnotesize
	\mathbb{F}(t)=\left(
	\begin{array}{c}
		S_{H}(0)e^{- \mu_{H} t}+ N_{H}(1-e^{-\mu_{H} t}) - \frac{b^{\alpha} \beta_{V}^{H}\mu_{V}^{1-\alpha} S_{H}^*
			I_{V}^* }{N_{H} \mu_{H}}  (1- e^{-\mu_{H} t})-\frac{b^{\alpha} \beta_{V}^{H} S_{H}^* }{N_{H}\mu_{H}} (1- e^{-\mu_{H} t})
		o(\epsilon)-S_{H}^*\\\\
		I_{H}(0)e^{-K t} +\frac{b^{\alpha} \beta_{V}^{H}\mu_{V}^{1-\alpha} S_{H}^*
			I_{V}^* }{K}  (1- e^{-K t})+\frac{b^{\alpha} \beta_{V}^{H}}{N_{H} K} (1- e^{-K t})	o(\epsilon) -\frac{o(\epsilon)}{C^{\beta} K}(1- e^{-K t})-I_{H}^* \\\\
		I_{V}(0)e^{-\mu_{H} t} -\frac{b^{p} \beta_{H}^{V}\mu_{H}^{1-p} I_{H}^*	I_{V}^* }{N_{H}C^{p} \mu_{V}}  (1- e^{-\mu_{V} t})+\frac{b^{p} \beta_{H}^{V}\mu_{H}^{1-p}\Pi_{V} I_{H}^* }{N_{H} C^{p} \mu_{V}^{2}} (1- e^{-\mu_{V} t})+\frac{b^{p} \beta_{H}^{V} }{N_{H} C^{p} \mu_{V}} o(\epsilon)(1- e^{-\mu_{V}t})-I_{H}^*
	\end{array}\right)\\\nonumber
\end{align}

and,

\begin{eqnarray}
	\centering
	\small
	\begin{array}{llll}
		\mathbb{G}(t)& =& \displaystyle  \left(
		\begin{array}{c}
			I_2(t)S_1(t)+I_{V}^*S_1(t)+S_{H}^*I_{2}\\ 
			I_{1} \\
			I_2(t)I_1(t)+I_{V}^*I_1(t)+I_{H}^*I_{2}(t) \\ 
		\end{array}
		\right),
	\end{array}
	\label{EQ:Volterra-3}
\end{eqnarray}

The matrix $A(t) = \left[A_{ij}(t) \right]$ defined in the equation~\eqref{EQ:Volterra-1} has following non-zero components:

\begin{eqnarray}
	\centering
	\begin{array}{llll}
		\displaystyle  A_{11}(t) &= & \displaystyle -\frac{b^{\alpha} \beta_{V}^{H} }{N_{H}}  \mu_{V}^{1-\alpha} \exp(-\mu_{H} t ),\\\\
		
		\displaystyle A_{21}(t)&=& \displaystyle \frac{b^{\alpha} \beta_{V}^{H} }{N_{H}}  \mu_{V}^{1-\alpha} \exp(-\mu_{H} t ),\\\\
		\displaystyle A_{22}(t)&=& \displaystyle \frac{ \mu_{H}^{1-\beta} }{C^{\beta}}  \exp(-K t ),\\\\
		\displaystyle  A_{32}(t)& =& \displaystyle \frac{b^{p}\beta_{H}^{V} \Pi_{V}}{N_{H} C^{p} \mu_{V}}  \mu_{H}^{1-p}\exp(-\mu_{V} t ) , \\\\
		
		\displaystyle A_{33}(t)& =& \displaystyle \frac{b^{p}\beta_{H}^{V} }{N_{H} C^{p} }  \mu_{H}^{1-p}\exp(-\mu_{V} t ).
	\end{array}
	\label{EQ:Volterra-4}
\end{eqnarray}

The linear approximation of the transformed system~(\ref{EQ:Volterra-1}) includes the characteristic equation below

\begin{equation}
	det\left( I - \int_{0}^{\infty} e^{-\lambda t} A(t) J dt\right)  = 0,\\
	\label{EQ:characteristic-equation}
\end{equation}wherein, $I$, $det$, $J$ is denoted as identity matrix, determinant and the Jacobian matrix respectively, of $G$ computed at equilibrium point which is written as follows:
\begin{eqnarray}
	\centering
	\small
	\begin{array}{llll}
		J=DG(0) &=& \left(
		\begin{array}{c c c}
			I_{V}^{*} & 0& S_{H}^{*}\\
			0&1&0\\
			0& I_{V}^{*}& I_{H}^{*} \\ 
		\end{array}
		\right),
	\end{array}
	\label{EQ:Volterra-5}
\end{eqnarray} 
To determine asymptotic stability of the EEP $\Psi_{*}$ locally we follow the lemma given below

\begin{lemma}(\cite{hethcote1980integral}; \cite{miller1967linearization})~\label{lemma-3}:
If the system~\eqref{EQ:Volterra-1} has solution that exist on the interval $[0, \infty)$ and remain bounded with following condition
\begin{enumerate}
\item [$\bullet$] $\mathbb{F}(t) \in C[0, \infty)$  and $\mathbb{F}(t) \rightarrow 0$ as $t \rightarrow \infty$.
\item [$\bullet$] $A(t) \in L^{1}[0, \infty)$,
\item [$\bullet$]$\mathbb{G}(L) \in C^{1}(R^{2})$ such that $\mathbb{G}(0) = 0$
\item [$\bullet$] $det(J)\neq 0$ and (\ref{EQ:characteristic-equation}) have characteristic roots with negative real parts
\end{enumerate}
 This indicates, origin exhibits local asymptotic stablility for the disease model~\eqref{EQ:Volterra-1}. As a result, popint $E_{*}$ of the system~\eqref{EQ:Fractional-order-vector-borne-disease-converted_model} is locally asymptotic stable.
\end{lemma}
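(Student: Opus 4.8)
The plan is to prove the general linearization principle for the convolution Volterra system~\eqref{EQ:Volterra-1}, namely that hypotheses (i)--(iv) together with boundedness of the solution force the zero solution to be locally asymptotically stable; stability of $E_*$ then follows by specializing $\mathbb{F}$, $A$, $\mathbb{G}$, and $J$ to the quantities in~\eqref{EQ:Volterra-2}--\eqref{EQ:Volterra-5}. First I would separate the nonlinearity from its linear part: since $\mathbb{G}\in C^1(\mathbb{R}^3)$ with $\mathbb{G}(0)=0$ and $J=D\mathbb{G}(0)$, write $\mathbb{G}(\mathbb{Q})=J\mathbb{Q}+\mathbb{H}(\mathbb{Q})$, where $\mathbb{H}(0)=0$ and $D\mathbb{H}(0)=0$, so that for every $\eta>0$ there exists $\delta>0$ with $|\mathbb{H}(\mathbb{Q})|\le \eta|\mathbb{Q}|$ whenever $|\mathbb{Q}|\le\delta$. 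Substituting into~\eqref{EQ:Volterra-1} gives the identity $\mathbb{Q}=\mathbb{F}+(AJ)*\mathbb{Q}+A*\mathbb{H}(\mathbb{Q})$, where $*$ denotes convolution over $[0,t]$.

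The \emph{crucial} analytic step is to invert the linear operator $I-(AJ)*$ through its resolvent. Because $A\in L^1[0,\infty)$ by (ii), the kernel $AJ$ lies in $L^1$, and its Laplace transform equals $\widehat{A}(\lambda)J$ with $\widehat{A}(\lambda)=\int_0^\infty e^{-\lambda t}A(t)\,dt$. The characteristic equation~\eqref{EQ:characteristic-equation} is exactly $\det\bigl(I-\widehat{A}(\lambda)J\bigr)=0$, and hypothesis (iv) asserts that all its roots satisfy $\Re\lambda<0$; equivalently $\det\bigl(I-\widehat{A}(\lambda)J\bigr)\neq 0$ for $\Re\lambda\ge 0$. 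By the Paley--Wiener theorem for Volterra convolution equations, this non-vanishing on the closed right half-plane is precisely the condition ensuring that the resolvent kernel $R$, defined by $R=AJ+(AJ)*R$, belongs to $L^1[0,\infty)$. Applying $(I+R*)$ to the displayed identity yields the variation-of-parameters form $\mathbb{Q}=\widetilde{\mathbb{F}}+B*\mathbb{H}(\mathbb{Q})$, with $\widetilde{\mathbb{F}}=\mathbb{F}+R*\mathbb{F}$ and $B=A+R*A\in L^1$. Since $\mathbb{F}\in C[0,\infty)$ is bounded with $\mathbb{F}(t)\to 0$ by (i) and $R\in L^1$, the standard fact that the convolution of an $L^1$ kernel with a bounded function vanishing at infinity again vanishes at infinity gives $\widetilde{\mathbb{F}}(t)\to 0$.

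Finally I would close the argument with a small-data fixed-point estimate followed by a $\limsup$ bound. Fix $\eta$ so small that $\eta\,\|B\|_{L^1}<1$ and let $\delta$ be the associated radius from the first step. For initial data small enough that $\sup_t|\widetilde{\mathbb{F}}(t)|$ is suitably small, the map $\mathbb{Q}\mapsto \widetilde{\mathbb{F}}+B*\mathbb{H}(\mathbb{Q})$ is a contraction on the ball of radius $\delta$ in $C_b([0,\infty);\mathbb{R}^3)$, which establishes Lyapunov stability and keeps the solution in the regime $|\mathbb{H}(\mathbb{Q})|\le\eta|\mathbb{Q}|$; here the assumed boundedness of the solution guarantees the convolutions are well defined globally. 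To upgrade to attractivity, set $m=\limsup_{t\to\infty}|\mathbb{Q}(t)|$; splitting $B*\mathbb{H}(\mathbb{Q})$ over $[0,N]$ and $[N,t]$ and using $\widetilde{\mathbb{F}}(t)\to0$, the vanishing $L^1$-tail of $B$, and $|\mathbb{H}(\mathbb{Q})|\le\eta|\mathbb{Q}|$, yields $m\le \eta\,\|B\|_{L^1}\,m$, whence $m=0$, i.e. $\mathbb{Q}(t)\to 0$. The main obstacle is the Paley--Wiener step: converting the root condition (iv) into $L^1$-integrability of the resolvent $R$, since every subsequent estimate—the decay of $\widetilde{\mathbb{F}}$ and the contractive $\limsup$ bound—rests on $B=A+R*A$ being an $L^1$ kernel. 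Specializing to the dengue system, one checks from~\eqref{EQ:Volterra-2}--\eqref{EQ:Volterra-4} that $\mathbb{F}(t)\to0$, each $A_{ij}$ is exponentially decaying and hence $L^1$, that $\mathbb{G}$ in~\eqref{EQ:Volterra-3} is a polynomial with $\mathbb{G}(0)=0$, and that $\det J\neq0$ from~\eqref{EQ:Volterra-5}; thus (i)--(iv) hold and $E_*$ is locally asymptotically stable whenever $R_0>1$.
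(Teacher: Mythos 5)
Your proposal and the paper's proof divide the labor in opposite ways. The paper treats the general linearization principle for the Volterra system~\eqref{EQ:Volterra-1} as a citation (Hethcote; Miller) and devotes its entire proof to verifying the hypotheses for the dengue specialization: it shows $\lim_{t\to\infty}\mathbb{F}(t)=0$ by an explicit computation in which the constant terms cancel only because $S_{H}^{*}$ satisfies the equilibrium identity, and then expands $\det L=0$ with $L=I-\int_{0}^{\infty}e^{-\lambda t}A(t)J\,dt$ (entries in~\eqref{EQ:Chacteristic-Expanding}) into a cubic $\lambda^{3}+A_{1}\lambda^{2}+B_{1}\lambda+C_{1}=0$, checking $A_{1},B_{1},C_{1}>0$ and $A_{1}B_{1}-C_{1}>0$ so that Routh--Hurwitz delivers the negative-real-part condition. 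You instead prove the general principle itself: the decomposition $\mathbb{G}(\mathbb{Q})=J\mathbb{Q}+\mathbb{H}(\mathbb{Q})$, the Paley--Wiener passage from nonvanishing of $\det\bigl(I-\widehat{A}(\lambda)J\bigr)$ on the closed right half-plane to an $L^{1}$ resolvent, and the contraction plus $\limsup$-splitting argument. This is essentially the standard proof of Miller's linearization theorem and is sound; it buys self-containedness where the paper relies on the literature. One step worth making explicit there: by Riemann--Lebesgue, $\widehat{A}(\lambda)\to 0$ as $|\lambda|\to\infty$ with $\Re\lambda\ge 0$, so ``all characteristic roots have negative real parts'' genuinely upgrades to nonvanishing on the entire closed half-plane, which is what Paley--Wiener requires. (Incidentally, your reading $\mathbb{G}\in C^{1}(\mathbb{R}^{3})$ is correct for the three-dimensional system; the $C^{1}(R^{2})$ in the statement is a typo in the paper.)

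The genuine gap is in your final specialization. You assert that ``one checks'' hypotheses (i)--(iv) and conclude that $E_{*}$ is locally asymptotically stable whenever $R_{0}>1$, but you verify only $\det J\neq 0$ from~\eqref{EQ:Volterra-5}; you never verify the second half of hypothesis (iv), namely that the roots of the characteristic equation~\eqref{EQ:characteristic-equation} lie in the open left half-plane. This is not automatic and is exactly where the endemic structure enters: the paper must compute the entries $L_{ij}$, expand the determinant to the cubic above, and verify the Routh--Hurwitz inequalities, all of which use $I_{H}^{*},I_{V}^{*}>0$, i.e.\ $R_{0}>1$. Your general theorem converts the root condition into stability but does nothing to establish the root condition, so the claim about $\Psi_{*}$ (the paper's $E_{*}$) is unsupported as written. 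Similarly, $\mathbb{F}(t)\to 0$ is not mere inspection of~\eqref{EQ:Volterra-2}: the limits of $F_{1},F_{2},F_{3}$ are nonzero constants unless the equilibrium identities are substituted, as in the paper's cancellation $N_{H}-\bigl[1+\tfrac{b^{\alpha}\beta_{V}^{H}\mu_{V}^{1-\alpha}I_{V}^{*}}{N_{H}\mu_{H}}\bigr]S_{H}^{*}=0$; at least this computation should be carried out rather than asserted.
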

\begin{proof}
	First, we prove that $\lim_{t \rightarrow \infty}\mathbb{F}(t) = 0$. Using the endemic equilibrium point of the system~\eqref{EQ:Fractional-order-vector-borne-disease-model} and the expression of $\mathbb{F}(t)$ in~\eqref{EQ:Volterra-2} leads to the following form:
	
	\begin{eqnarray}
		\centering
		\begin{array}{llll}
			\displaystyle F_{1}(t) &= & \displaystyle S_{H}(0)e^{- \mu_{H} t}+ N_{H}(1-e^{-\mu_{H} t}) - \frac{b^{\alpha} \beta_{V}^{H}\mu_{V}^{1-\alpha} S_{H}^*
				I_{V}^* }{N_{H} \mu_{H}}  (1- e^{-\mu_{H} t})\\&-& \displaystyle
			\frac{b^{\alpha} \beta_{V}^{H} S_{H}^* }{ N_{H}\mu_{H}} (1- e^{-\mu_{H} t})
			o(\epsilon)-S_{H}^{*}.
		\end{array}
		\label{EQ:F7}
	\end{eqnarray}
	Now we have 
	\begin{eqnarray}
		\centering
		\begin{array}{llll}
			\displaystyle	\lim_{t \rightarrow \infty}F_{1}(t)& =& \displaystyle \lim_{t \rightarrow \infty}\left\lbrace  S_{H}(0)e^{- \mu_{H} t}+ N_{H}(1-e^{-\mu_{H} t}) - \frac{b^{\alpha} \beta_{V}^{H}\mu_{V}^{1-\alpha} S_{H}^*
				I_{V}^* }{N_{H} \mu_{H}}  (1- e^{-\mu_{H} t})\right.\\ &-& \displaystyle \left.\quad\frac{b^{\alpha} \beta_{V}^{H} S_{H}^* }{ N_{H}\mu_{H}} (1- e^{-\mu_{H} t})	o(\epsilon)-S_{H}^*\right\rbrace ,\\
			&= & \displaystyle  N_{H} -S_{H}^*- \frac{b^{\alpha} \beta_{V}^{H}\mu_{V}^{1-\alpha} S_{H}^*
				I_{V}^* }{N_{H} \mu_{H}}, \\  & =& \displaystyle N_{H} -\left[ 1+\frac{b^{\alpha} \beta_{V}^{H}\mu_{V}^{1-\alpha}
				I_{V}^* }{N_{H} \mu_{H}}\right] S_{H}^{*}  ,\\ &= & \displaystyle  N_{H} -\left[ 1+\frac{b^{\alpha} \beta_{V}^{H}\mu_{V}^{1-\alpha}
				I_{V}^* }{N_{H} \mu_{H}}\right]  \frac{\mu_{H} N_{H}}{\frac{b^{\alpha}\mu_{V}^{1-\alpha}\beta_{V}^{H}}{N_{H}}I_{V}^{*}+\mu_{H}} ,\\ & = & \displaystyle 0 .\\
		\end{array}
	\end{eqnarray}
	As above, we can easily show that  $\displaystyle \lim_{t \rightarrow \infty} F_{2}(t) = 0$, $\displaystyle \lim_{t \rightarrow \infty} F_{3}(t) = 0$. \\
	Therefore, $\displaystyle \lim_{t \rightarrow \infty} \mathbb{F}(t) = 0$. 
	
	Expanding the characteristic equation~(\ref{EQ:characteristic-equation}), we have $det L = 0$. Where, $ L= (I - \int_{0}^{\infty} e^{-\lambda t} A(t) J dt)$. The matrix $L$ is given as follows:
	
	\begin{eqnarray}
		\begin{array}{llll}
			\displaystyle L_{11} &=& \displaystyle 1+ \frac{b^{\alpha} \beta_{V}^{H}\mu_{V}^{1-\alpha} I_{V}^*}{N_{H}(\lambda+\mu_{H})},\\
			\displaystyle L_{12}&=& 0,\\
			\displaystyle L_{13} &=& \displaystyle -\frac{b^{\alpha} \beta_{V}^{H}\mu_{V}^{1-\alpha} S_{H}^*}{(\lambda+\mu_{H})}, \\	  
			\displaystyle L_{21}  &= &\displaystyle \frac{b^{\alpha}\beta_{V}^{H}\mu_{V}^{1-\alpha} I_{V}^*}{(\lambda+\mu_{H})} ,\\
			\displaystyle L_{22} &=& \displaystyle1-\frac{\mu_{H}^{1-\beta}}{C^{\beta}(\lambda+K)} \\ 
			\displaystyle L_{23}  &=& \displaystyle L_{13} \\
			\displaystyle L_{31} &= &\displaystyle 0, \\
			\displaystyle L_{32} &=& \displaystyle - \frac{b^{p}\beta_{H}^{V} \mu_{H}^{1-p}\Pi_{V}}{N_{H} C^{p} \mu_{V}(\mu_{V}+\lambda)} +\frac{b^{p}\beta_{H}^{V}\mu_{H}^{1-p} }{N_{H} C^{p} (\mu_{V}+\lambda)} I_{V}^{*},\\
			\displaystyle L_{33} &=& \displaystyle 1+\frac{b^{p}\beta_{H}^{V}\mu_{H}^{1-p} }{N_{H} C^{p} (\mu_{V}+\lambda)} I_{H}^{*},\\
		\end{array}		
		\label{EQ:Chacteristic-Expanding}
	\end{eqnarray}
	Expanding the determinant $det L = 0$, the characteristic equation~(\ref{EQ:characteristic-equation}) becomes:
	\begin{eqnarray}
		\lambda^{3} + A_{1} \lambda^{2} + B_{1} \lambda + C_{1}= 0,
	\end{eqnarray} where,
	\begin{eqnarray}
		\centering
		\begin{array}{llll}
			\displaystyle	A_{1} &=& \displaystyle \left(\mu_{V}+ \frac{b^{p} \beta_{H}^{V} \mu_{H}^{1-p}}{N_{H} C^{p}} I_{H}^{*}\right)+\left(\mu_{H}+\frac{\mu_{H}^{1-\alpha}}{C^{\beta}}\right)+ \left(\mu_{H} + \frac{b^{\alpha}\beta_{V}^{H} \mu_{V}^{1-\alpha}}{ N_{H}} I_{V}^{*}\right)>0,\\
			\displaystyle	B_{1} & = & \displaystyle \left(\mu_{H}+\frac{\mu_{H}^{1-\alpha}}{C^{\beta}}\right) \left(\mu_{H} + \frac{b^{\alpha}\beta_{V}^{H} \mu_{V}^{1-\alpha}}{ N_{H}} I_{V}^{*}+ \frac{b^{p} \beta_{H}^{V} \mu_{H}^{1-p}}{N_{H} C^{p}} I_{H}^{*}\right)\\ &+& \displaystyle \left(\mu_{H} + \frac{b^{\alpha}\beta_{V}^{H} \mu_{V}^{1-\alpha}}{ N_{H}} I_{V}^{*}\right)\left(\mu_{V}+ \frac{b^{p} \beta_{H}^{V} \mu_{H}^{1-p}}{N_{H} C^{p}} I_{H}^{*}\right)>0, \\
			\displaystyle	C_{1} & = & \displaystyle \left(\mu_{H}+\frac{\mu_{H}^{1-\alpha}}{C^{\beta}}\right) \left[\mu_{V} \frac{b^{\alpha}\beta_{V}^{H} \mu_{V}^{1-\alpha}}{ N_{H}} I_{V}^{*}+\left(\mu_{H} + \frac{b^{\alpha}\beta_{V}^{H} \mu_{V}^{1-\alpha}}{ N_{H}} I_{V}^{*}\right) \frac{b^{p} \beta_{H}^{V} \mu_{H}^{1-p}}{N_{H} C^{p}} I_{H}^{*} \right]>0.\\
		\end{array}
	\end{eqnarray}
	and
	\begin{eqnarray}
		\begin{array}{llll}
			\displaystyle	A_{1}B_{1}-C_{1}&=& \displaystyle\left[\left(\mu_{H} + \frac{b^{\alpha}\beta_{V}^{H} \mu_{V}^{1-\alpha}}{ N_{H}} I_{V}^{*}\right) \left(\mu_{V}+ \frac{b^{p} \beta_{H}^{V} \mu_{H}^{1-p}}{N_{H} C^{p}} I_{H}^{*}\right)\right.\\&& \displaystyle\left. +\frac{b^{p} \beta_{H}^{V} \mu_{H}^{1-p}}{N_{H} C^{p}} I_{H}^{*} \left(\mu_{H}+\frac{\mu_{H}^{1-\alpha}}{C^{\beta}}\right)\right]\left[\mu_{H}+\frac{\mu_{H}^{1-\alpha}}{C^{\beta}}+\mu_{V}+ \frac{b^{p} \beta_{H}^{V} \mu_{H}^{1-p}}{N_{H} C^{p}} I_{H}^{*}\right]\\&+& \displaystyle\left(\mu_{H}+\frac{\mu_{H}^{1-\alpha}}{C^{\beta}}\right)^{2} \left(\mu_{H} + \frac{b^{\alpha}\beta_{V}^{H} \mu_{V}^{1-\alpha}}{ N_{H}} I_{V}^{*}\right)\\&+&\displaystyle \mu_{H}\left(\mu_{H}+\frac{\mu_{H}^{1-\alpha}}{C^{\beta}}\right)\left(\mu_{V}+ \frac{b^{p} \beta_{H}^{V} \mu_{H}^{1-p}}{N_{H} C^{p}} I_{H}^{*}\right)\\ &+& \displaystyle \left[2\mu_{H}+\frac{\mu_{H}^{1-\alpha}}{C^{\beta}}+\mu_{V}+ \frac{b^{p} \beta_{H}^{V} \mu_{H}^{1-p}}{N_{H} C^{p}} I_{H}^{*}\right]\left(\mu_{H} + \frac{b^{\alpha}\beta_{V}^{H} \mu_{V}^{1-\alpha}}{ N_{H}} I_{V}^{*}\right)^{2}\\&+& \displaystyle \left(\mu_{H}+\frac{\mu_{H}^{1-\alpha}}{C^{\beta}}\right)\frac{b^{\alpha+p} \beta_{V}^{H} \beta_{H}^{V}}{ N_{H}^{2} C^{p} \mu_{V}^{\alpha-1} \mu_{H}^{p-1} } I_{H}^{*} I_{V}^{*},\\ \implies A_{1}B_{1}-C_{1}&>&0.
		\end{array}
	\end{eqnarray} 
From this, we can infer that the endemic equilibrium point $\Psi_{*}$ in $\mathbb {\mathring{T}}$ shows local asymptotic stability, according to the Routh-Hurwitz criterion.
\end{proof}
In the context of model~\eqref{EQ:Fractional-order-vector-borne-disease-converted_model}, under the condition $R_{0}> 1$ we have confirmed global aysmptotic stability of the endemic equilibrium point $\Psi_{*}$ in $\mathbb {\mathring{T}}$, according to~\cite{li1996geometric,li1999global,wu2023global}.

Now, the equation~\eqref{EQ:Fractional-order-vector-borne-disease-converted_model} can be modified using the Theorem~\eqref{Theorem-1} in the following form:
\begin{eqnarray}
	\centering
	\begin{array}{llll}
		\displaystyle e^{-\mu_{V} t}~_{RL}{D_{t}}^{1-\alpha}\left[I_{V}(t) e^{\mu_{V} t}\right] &\approx& \displaystyle \mu_{V}^{1-\alpha} I_{V}(t),\\
		\displaystyle  e^{-\mu_{H} t}~_{RL}{D_{t}}^{1-\beta}\left[I_{H}(t) e^{\mu_{H} t}\right] &\approx& \displaystyle \mu_{H}^{1-\beta} I_{H}(t),\\
		\displaystyle e^{-\mu_{H} t}~_{RL}{D_{t}}^{1-p}\left[I_{H}(t) e^{\mu_{H} t}\right] &\approx & \displaystyle \mu_{H}^{1-p} I_{H}(t).
		\label{EQ:Relation-fractional_derivative-1}
	\end{array}
\end{eqnarray}
Using the relation in~\eqref{EQ:Relation-fractional_derivative-1} in the system~\eqref{EQ:Fractional-order-vector-borne-disease-converted_model}, we have the modified system as follows:

\begin{eqnarray}~\label{EQ:Modified_Fractional-order-vector-borne-disease-model}
	\centering
	\begin{array}{llll}
		\displaystyle \frac{dS_{H}}{dt} &=& \displaystyle \mu_{H} N_{H} - \frac{b^{\alpha}\beta_{V}^{H}  \mu_{V}^{1-\alpha}}{N_{H}}  S_{H}(t) I_{V}(t)  -\mu_{H} S_{H}(t),\\ 
		\displaystyle \frac{dI_{H}}{dt}&= & \displaystyle \frac{b^{\alpha}\mu_{V}^{1-\alpha}\beta_{V}^{H}}{N_{H}}     S_{H}(t) I_{V}(t)- \frac{\mu_{H}^{1-\beta} I_{H}(t)}{C^{\beta}} -\mu_{H} I_{H}(t),\\
		\displaystyle  \frac{dI_{V}}{dt} &= & \displaystyle \frac{b^{p}\beta_{H}^{V} \mu_{H}^{1-p}}{N_{H} C^{p}}  \left(\frac{\Pi_{V}}{\mu_{V}}-I_{V}(t)\right) I_{H}(t) -\mu_{V} I_{V}(t).
	\end{array}
\end{eqnarray}
The system of equation can be formulated as following form:
\begin{eqnarray}~\label{general-equation}
	\frac{dp_j(t)}{dt}=\mathbb {f}_{j}(p_{j}(t)),\hspace{0.3cm} t\geq \sigma,\hspace{0.2cm} j=1,2,3,...,n.
\end{eqnarray}where $\mathbb{R}_{+0}^n=\lbrace (x_1,....x_n): p_j\geq 0,\hspace{0.2cm} j=1,....,n \rbrace$.

Let us define the general principal of global stability using~\cite{li2002global,li1996geometric,wu2023global}.

\begin{lemma}[Theorem-2.2 in~\cite{li2002global}]~\label{lemma-5}
\begin{eqnarray}
	\left(H1\right)&:&  \exists~\hspace{0.1cm} \mathbb{K}\subset \mathbb{D} ~\hspace{0.1cm} \text{such that}~\hspace{0.1cm} \mathbb{K}~\hspace{0.1cm} \text{is compact absorbing set}.\\\nonumber
	\left(H2\right)&:&  \text{In a system}~\eqref{general-equation}~\hspace{0.1cm} \Delta_{*}~\hspace{0.1cm} \text{ denotes unique equilibrium}~\hspace{0.1cm} \text{exhibiting local asymptotic stability}.\\\nonumber
	\left(H3\right)&:&  \text{The Poincare-Bendixson property}~\hspace{0.1cm}\text{is satisfied by the equation }~\eqref{general-equation} . \\\nonumber
	\left(H4\right)&:& \text{In} ~\hspace{0.1cm}\mathbb{D}~\hspace{0.1cm} \text{every periodic orbit of}~\eqref{general-equation}~\hspace{0.1cm}  \text{is orbitally stable in asymptotic sense}.
\end{eqnarray} Hence, $\Delta_{*}$ is globally asymptotically stable in $\mathbb{D}$.
\end{lemma}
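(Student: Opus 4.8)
The plan is a proof by contradiction organised around the Poincar\'e--Bendixson structure of $\omega$-limit sets. Assume $\Delta_*$ is locally asymptotically stable, as granted by $(H2)$, but fails to be globally asymptotically stable in $\mathbb{D}$. Then its basin of attraction $\mathcal{B}$ is a nonempty open proper subset of $\mathbb{D}$, so I may fix a point $x_0\in\mathbb{D}\setminus\mathcal{B}$. By $(H1)$ the forward orbit of $x_0$ eventually enters the compact absorbing set $\mathbb{K}$, hence its $\omega$-limit set $\Omega:=\omega(x_0)$ is nonempty, compact, connected, invariant, and contained in $\mathbb{K}\subset\mathbb{D}$.

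First I would show that $\Omega$ contains no equilibrium. Since $(H2)$ makes $\Delta_*$ the unique equilibrium of \eqref{general-equation} in $\mathbb{D}$, the only candidate is $\Delta_*$ itself; but $\Delta_*\in\Omega$ together with its local asymptotic stability would draw the orbit of $x_0$ into the basin $\mathcal{B}$ and give $\omega(x_0)=\{\Delta_*\}$, contradicting $x_0\notin\mathcal{B}$. Thus $\Omega$ is a compact invariant set free of equilibria, and invoking the Poincar\'e--Bendixson property $(H3)$ forces $\Omega$ to be a single nonconstant periodic orbit $\gamma\subset\mathbb{D}$. By $(H4)$ this orbit $\gamma$ is orbitally asymptotically stable.

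The crux, and the step I expect to be the main obstacle, is extracting a contradiction from the coexistence of the orbitally stable periodic orbit $\gamma$ with the locally stable unique equilibrium $\Delta_*$. Here I would pass to the planar reduction afforded by $(H3)$ (legitimate since $\mathbb{D}=\mathring{\mathbb{T}}$ is convex, hence simply connected) and consider the bounded region $\Gamma$ enclosed by $\gamma$; the index theorem forces $\Gamma$ to contain an equilibrium, which by uniqueness is $\Delta_*$. Orbital asymptotic stability of $\gamma$ makes points just inside $\gamma$ converge to $\gamma$, while local asymptotic stability of $\Delta_*$ makes points near $\Delta_*$ converge to $\Delta_*$; the two attracting basins inside $\Gamma$ must then be separated by an invariant set, which by $(H3)$ is another nonconstant periodic orbit $\gamma'\subset\Gamma$. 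Such a separating orbit is orbitally unstable from at least one side, contradicting $(H4)$.

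Hence no point $x_0\in\mathbb{D}\setminus\mathcal{B}$ can exist, so $\mathcal{B}=\mathbb{D}$ and $\Delta_*$ is globally asymptotically stable in $\mathbb{D}$. The delicate point throughout is that $(H3)$ and $(H4)$ must both be consumed: $(H3)$ guarantees that every compact equilibrium-free limit set and every basin separatrix is a periodic orbit, and $(H4)$ rules out the orbitally unstable separating orbit that the nesting argument produces. In the application to \eqref{general-equation} one verifies $(H4)$ constructively via the Li--Muldowney second-compound-matrix criterion, exhibiting a matrix-valued function $P$ for which the Lozinskii measure of $B=P_f P^{-1}+P J^{[2]} P^{-1}$ is eventually negative along orbits in $\mathbb{K}$.
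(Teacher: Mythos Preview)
The paper does not prove this lemma at all: it is quoted as Theorem~2.2 of \cite{li2002global} and invoked as a black box. What the paper \emph{does} prove is Proposition~\ref{prop-6}, by verifying the four hypotheses $(H1)$--$(H4)$ for the concrete reduced system~\eqref{EQ:Modified_Fractional-order-vector-borne-disease-model}: uniform persistence gives the compact absorbing set, Proposition~\ref{prop-5} gives local stability, competitiveness (via the sign-changing matrix $H$) gives the Poincar\'e--Bendixson property through the Hirsch--Smith theory, and the second-additive-compound calculation with $\bar q_2<0$ gives orbital asymptotic stability of any periodic orbit. So there is no ``paper's own proof'' of the abstract lemma to compare against; your closing remark about verifying $(H4)$ via the Li--Muldowney compound-matrix criterion is exactly what the paper does, but that is the \emph{application} of the lemma, not its proof.

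Regarding your sketch of the abstract result itself: the skeleton (contradiction $\to$ equilibrium-free $\omega$-limit set $\to$ periodic orbit by $(H3)$ $\to$ orbitally stable by $(H4)$ $\to$ contradiction with the stable equilibrium) is indeed the standard route. The genuine soft spot is your ``planar reduction'' step. The Poincar\'e--Bendixson property $(H3)$ as stated only constrains $\omega$-limit sets; it does not by itself furnish a Jordan-curve notion of ``the region $\Gamma$ enclosed by $\gamma$'' or license a planar index theorem, and simple connectedness of $\mathbb{D}$ is not enough either. In the intended setting (three-dimensional competitive systems) the missing ingredient is a nontrivial theorem of Hirsch and Smith that a nonconstant periodic orbit bounds an invariant two-cell containing the equilibrium; only with that in hand do your ``inside/outside'' and separatrix arguments go through. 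Likewise, the claim that the basin boundary \emph{is} a periodic orbit is too strong: what you actually get is that the $\omega$-limit of a boundary point is a periodic orbit $\gamma'$, which then fails orbital asymptotic stability on one side. These are repairable in the competitive-system context, but they are precisely why the result is cited rather than reproved.
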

\begin{proposition}~\label{prop-6}
If $R_{0}> 1$ the endemic equilibrium point $\Psi_{*}$ of~\eqref{EQ:Fractional-order-vector-borne-disease-converted_model} indicates global asymptotic stablity in $\mathbb {\mathring{T}}$.
\end{proposition}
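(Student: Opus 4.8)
The plan is to deduce Proposition~\ref{prop-6} from the geometric criterion recalled in Lemma~\ref{lemma-5}, applied to the three–dimensional modified system~\eqref{EQ:Modified_Fractional-order-vector-borne-disease-model}, whose right–hand side agrees asymptotically with~\eqref{EQ:Fractional-order-vector-borne-disease-converted_model} by virtue of Theorem~\eqref{Theorem-1} and the limits in~\eqref{EQ:Relation-fractional_derivative-1}. Writing $a=\frac{b^{\alpha}\beta_{V}^{H}\mu_{V}^{1-\alpha}}{N_{H}}$, $c=\frac{b^{p}\beta_{H}^{V}\mu_{H}^{1-p}}{N_{H}C^{p}}$ and $K=\mu_{H}+\frac{\mu_{H}^{1-\beta}}{C^{\beta}}$, the task reduces to verifying the four hypotheses (H1)--(H4). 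Three of these are essentially already in hand: (H1) is supplied by Proposition~\ref{prop-3}, which exhibits the compact set $\mathbb{T}$ as a positively invariant global attractor, so its interior $\mathbb{\mathring{T}}$ contains a compact absorbing set; (H2) is exactly Proposition~\ref{prop-5} together with the uniqueness of $\Psi_{*}$ established around~\eqref{EQ:Endemic_point-4}, valid precisely when $R_{0}>1$.

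For (H3) I would invoke the theory of three–dimensional competitive systems. The Jacobian of~\eqref{EQ:Modified_Fractional-order-vector-borne-disease-model} has off–diagonal sign pattern with $J_{12}=J_{31}=0$, $J_{13}<0$ and $J_{21},J_{23},J_{32}>0$ on $\mathbb{\mathring{T}}$. Conjugating by the diagonal matrix $H=\mathrm{diag}(1,-1,1)$ turns every nonzero off–diagonal entry nonpositive, so the system is competitive with respect to the orthant fixed by $H$; by Smith's theorem such a system enjoys the Poincar\'e--Bendixson property, giving (H3).

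The crux is (H4): every periodic orbit of~\eqref{EQ:Modified_Fractional-order-vector-borne-disease-model} must be shown orbitally asymptotically stable. Here I would use the Li--Muldowney second additive compound matrix
\begin{equation}
	J^{[2]}=\begin{pmatrix}
		-aI_{V}-\mu_{H}-K & aS_{H} & aS_{H}\\
		c\!\left(\tfrac{\Pi_{V}}{\mu_{V}}-I_{V}\right) & -aI_{V}-\mu_{H}-cI_{H}-\mu_{V} & 0\\
		0 & aI_{V} & -K-cI_{H}-\mu_{V}
	\end{pmatrix},
\end{equation}
set $P(t)=\mathrm{diag}\!\left(1,\tfrac{I_{H}}{I_{V}},\tfrac{I_{H}}{I_{V}}\right)$, and study $B=P_{f}P^{-1}+P J^{[2]}P^{-1}$, whose diagonal picks up the logarithmic-derivative terms $\tfrac{\dot I_{H}}{I_{H}}-\tfrac{\dot I_{V}}{I_{V}}$. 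Decomposing $B$ into the natural $1+2$ blocks and estimating its Lozinskii measure $\mu(B)$ in the $\ell_{\infty}$ norm, I would use the second and third equations of~\eqref{EQ:Modified_Fractional-order-vector-borne-disease-model} to rewrite $aS_{H}\tfrac{I_{V}}{I_{H}}=\tfrac{\dot I_{H}}{I_{H}}+K$ and $c\!\left(\tfrac{\Pi_{V}}{\mu_{V}}-I_{V}\right)\tfrac{I_{H}}{I_{V}}=\tfrac{\dot I_{V}}{I_{V}}+\mu_{V}$, which collapses the block bounds into an estimate of the shape $\mu(B)\le \tfrac{\dot I_{H}}{I_{H}}-\mu_{H}$.

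Finally, since solutions stay in the bounded set $\mathbb{T}$, the quantity $\frac{1}{t}\int_{0}^{t}\mu(B)\,ds\le \frac{1}{t}\log\frac{I_{H}(t)}{I_{H}(0)}-\mu_{H}$ has $\limsup_{t\to\infty}\le-\mu_{H}<0$, so $\bar q_{2}<0$ in the Li--Muldowney sense; this both excludes nonconstant periodic orbits and certifies orbital asymptotic stability of any such orbit, delivering (H4). With (H1)--(H4) in place, Lemma~\ref{lemma-5} yields global asymptotic stability of $\Psi_{*}$ in $\mathbb{\mathring{T}}$. I expect the Lozinskii-measure estimate---in particular choosing $P$ correctly and squeezing the block norms past the cross terms $aS_{H}\tfrac{I_{V}}{I_{H}}$ and $c\!\left(\tfrac{\Pi_{V}}{\mu_{V}}-I_{V}\right)\tfrac{I_{H}}{I_{V}}$ using the flow---to be the only genuinely delicate step; the remaining hypotheses follow from results already established in the paper.
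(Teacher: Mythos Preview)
Your overall strategy coincides with the paper's: verify (H1)--(H4) of Lemma~\ref{lemma-5} for the approximating system~\eqref{EQ:Modified_Fractional-order-vector-borne-disease-model}, obtain (H3) via competitiveness (your choice $H=\mathrm{diag}(1,-1,1)$ is indeed the one that makes all off--diagonal entries of $HJH$ nonpositive and reproduces the matrix displayed in the paper), and obtain (H4) via the Li--Muldowney second compound with $P=\mathrm{diag}(1,I_{H}/I_{V},I_{H}/I_{V})$, arriving at $\mu(B)\le \dot I_{H}/I_{H}-\mu_{H}$ and hence $\bar q_{2}<0$. All of that matches the paper.

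There is, however, a real gap in your treatment of (H1). Proposition~\ref{prop-3} only says that $\mathbb{T}$ is compact, positively invariant and globally attracting; this does \emph{not} by itself yield a compact absorbing set inside the open region $\mathbb{\mathring T}$. Orbits starting in $\mathbb{\mathring T}$ could in principle accumulate on $\partial\mathbb{T}$---in particular at the disease--free equilibrium $\Psi_{0}$, where $I_{H}=I_{V}=0$---and then no compact subset of $\mathbb{\mathring T}$ is absorbing. What is actually required is \emph{uniform persistence} of~\eqref{EQ:Modified_Fractional-order-vector-borne-disease-model}, and this is precisely where the hypothesis $R_{0}>1$ enters for (H1). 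The paper fills this step by noting that $\{\Psi_{0}\}$ is the maximal invariant set on $\partial\mathbb{T}$, that it is isolated and unstable when $R_{0}>1$, and then invoking Theorem~4.3 of Freedman--Ruan--Tang to deduce uniform persistence, which in turn is equivalent (on a bounded invariant region) to the existence of a compact absorbing set in $\mathbb{\mathring T}$. Without this persistence argument your justification of (H1) is incomplete; indeed, as written it would apply verbatim when $R_{0}<1$, where the conclusion is plainly false.
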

\begin{proof}If the model~\eqref{EQ:Modified_Fractional-order-vector-borne-disease-model} is uniformly persistent then there exist absorbing compact set~\cite{wu2023global,freedman1994uniform}. The positive invariant boundary $\mathbb{\partial T}$ of $\mathbb{T}$ contain unique disease-free equilibrium $\Psi_{0}$ of the model~\eqref{EQ:Modified_Fractional-order-vector-borne-disease-model}. Furthermore, set $\lbrace\Psi_{0}\rbrace$ is maximal  and isolated on the boundary $\mathbb{\partial T}$. Now, following \citep{wu2023global} and the \textbf{Theorem-4.3} of~ \cite{freedman1994uniform} we see that instability of $\Psi_{0}$ is identical to the uniform persistence of the system of equation~\eqref{EQ:Modified_Fractional-order-vector-borne-disease-model}. Now using the \textbf{proposition}~\eqref{disease_free_global_prop-4} for instability of $\Psi_{0}$ in the circumstance of $R_{0}>1$ and the \textbf{Theorem-4.3} of~ \cite{freedman1994uniform}. Hence, we established that the model~\eqref{EQ:Modified_Fractional-order-vector-borne-disease-model} is uniformly persistent if $R_{0}>1$. Therefore $\mathbb{\partial T} \subset \mathbb{T}$ is a compact absorbing set. Hence, the condition $(H1)$ is fulfilled. The condition $(H2)$ also satisfied, since, $\Psi_{*}$ the endemic equilibrium point is unique and locally asymptotic stable by~\textbf{proposition}~\eqref{prop-5}.
	
	Now, we verify the condition $(H3)$.
	Let the jacobian matrix of the system~\eqref{EQ:Modified_Fractional-order-vector-borne-disease-model} be defined as:
	\begin{eqnarray}
		\centering
		\small
		\begin{array}{llll}
			J &=& \left(
			\begin{array}{c c c}
				- \frac{b^{\alpha}\beta_{V}^{H} \mu_{V}^{1-\alpha}}{ N_{H}} I_{V}^{*} -\mu_{H} & 0 & -\frac{b^{\alpha}\beta_{V}^{H} \mu_{V}^{1-\alpha}}{ N_{H}} S_{H}^{*} \\\\ \frac{b^{\alpha}\beta_{V}^{H} \mu_{V}^{1-\alpha}}{ N_{H}} I_{V}^{*} & -\mu_{H}-\frac{\mu_{H}^{1-\beta}}{C^{\beta}} & \frac{b^{\alpha}\beta_{V}^{H} \mu_{V}^{1-\alpha}}{ N_{H}} S_{H}^{*}\\\\
				0 & \frac{b^{p} \beta_{H}^{V} \mu_{H}^{1-p}}{N_{H} C^{p}}  \left(\frac{\Pi_{V}}{\mu_{V}}-I_{V}^{*}\right) & -\mu_{V}-\frac{b^{p} \beta_{H}^{V} \mu_{H}^{1-p}}{N_{H} C^{p}}  I_{H}^{*}\\\\
			\end{array}
			\right),\\
		\end{array}
		\hspace{0.3cm}\label{EQ:Jacobian-3}
	\end{eqnarray}
	choosing the matrix
	\begin{eqnarray}
		\centering
		\begin{array}{llll}
			H &=& \left(
			\begin{array}{c c c}
				-1 & 0 & 0 \\\\ 0 & 1 & 0\\\\
				0 & 0 & 1\\\\
			\end{array}
			\right),\\
		\end{array}
		\label{EQ:Jacobian-4}
	\end{eqnarray}
	Then,
	\begin{eqnarray}
		\centering
		\small
		\begin{array}{llll} 
			\displaystyle	H J H &=&  \displaystyle \left(
			\begin{array}{c c c}
				- \frac{b^{\alpha}\beta_{V}^{H} \mu_{V}^{1-\alpha}}{ N_{H}} I_{V} -\mu_{H} & 0 & -\frac{b^{\alpha}\beta_{V}^{H} \mu_{V}^{1-\alpha}}{ N_{H}} S_{H} \\\\
				-\frac{b^{\alpha}\beta_{V}^{H} \mu_{V}^{1-\alpha}}{ N_{H}} I_{V} & -\mu_{H}-\frac{\mu_{H}^{1-\beta}}{C^{\beta}} & -\frac{b^{\alpha}\beta_{V}^{H} \mu_{V}^{1-\alpha}}{ N_{H}} S_{H}\\\\
				0 & -\frac{b^{p} \beta_{H}^{V} \mu_{H}^{1-p}}{N_{H} C^{p}}  \left(\frac{\Pi_{V}}{\mu_{V}}-I_{V}\right) & -\mu_{V}-\frac{b^{p} \beta_{H}^{V} \mu_{H}^{1-p}}{N_{H} C^{p}}  I_{H}\\\\
			\end{array}
			\right),
		\end{array}	
		\hspace{0.4cm}\label{EQ:Jacobian-5}
	\end{eqnarray}
	Hence, the system of equation~\eqref{EQ:Modified_Fractional-order-vector-borne-disease-model} is said to be competitive in  $\mathbb {\mathring{T}}$, since, $H J H$ has non-positive off diagonal elements $\forall$ $\left(S_{H}, I_{H}, I_{V}\right)$$\in$   $\mathbb{\mathring{T}}$. Since,  $\mathbb{\mathring{T}}\subset \mathbb R^{3}$ is convex and the system~\eqref{EQ:Modified_Fractional-order-vector-borne-disease-model} is competitive then using the result of~\cite{hirsch2006monotone} and the theorem$(2.1)$ of~\cite{li2002global} we have the condition of $(H3)$ holds.
	
	To prove condition $(H4)$ we follow~\cite{li1996geometric}. 
	Let the second additive compound matrix of~\eqref{EQ:Modified_Fractional-order-vector-borne-disease-model} calculated as follows:
	\begin{eqnarray}
		\centering
		\footnotesize
		\begin{array}{llll}
			J^{[2]} &=& \left(
			\begin{array}{c c c}
				-  \Lambda_{1} I_{V} -2\mu_{H} -\Lambda_{2}& \Lambda_{1} S_{H} & \Lambda_{1} S_{H} \\\\ \Lambda_{3} \left(\frac{\Pi_{V}}{\mu_{V}}-I_{V}\right) & - \Lambda_{1} I_{V}-\mu_{H}-\mu_{V}-\Lambda_{3}  I_{H} & 0\\\\
				0 & \Lambda_{1} I_{V} & -\mu_{H}-\Lambda_{2}-\mu_{V}-  \Lambda_{3}I_{H}\\\\
			\end{array}
			\right),\\
		\end{array}
		\hspace{0.4cm}	\label{EQ:Jacobian-6}
	\end{eqnarray} where, 
	\begin{eqnarray}
		\displaystyle	\Lambda_{1} &=&  \displaystyle \frac{b^{\alpha}\beta_{V}^{H} \mu_{V}^{1-\alpha}}{ N_{H}},\\\nonumber
		\displaystyle	\Lambda_{2}& =&\frac{\mu_{H}^{1-\alpha}}{C^{\beta}},\\\nonumber
		\displaystyle	\Lambda_{3}& =& \displaystyle \frac{b^{p} \beta_{H}^{V} \mu_{H}^{1-p}}{N_{H} C^{p}}.\\\nonumber
	\end{eqnarray}
	\begin{eqnarray}
		\centering
		\begin{array}{llll}
			A &=& \left(
			\begin{array}{c c c}
				1 & 0 & 0 \\\\ 0 & \frac{I_{H}}{I_{V}} & 0\\\\
				0 & 0 & \frac{I_{H}}{I_{V}}\\\\
			\end{array}
			\right),\\
		\end{array}
		\label{EQ:Jacobian-7}
	\end{eqnarray}
	\begin{eqnarray}
		\centering
		\begin{array}{llll}
			A_{f} &=& \left(
			\begin{array}{c c c}
				0 & 0 & 0 \\\\ 0 & \frac{I_{H}^{'}}{I_{V}}-\frac{I_{H} I_{V}^{'}}{I_{V}^{2}} & 0\\\\
				0 & 0 & \frac{I_{H}^{'}}{I_{V}}-\frac{I_{H} I_{V}^{'}}{I_{V}^{2}}\\\\
			\end{array}
			\right),\\
		\end{array}
		\label{EQ:Jacobian-8}
	\end{eqnarray}
	\begin{eqnarray}
		\centering
		\small
		\begin{array}{llll}
			A_{f} A^{-1} &=& \left(
			\begin{array}{c c c}
				0 & 0 & 0 \\\\ 0 & \frac{I_{H}^{'}}{I_{H}}-\frac{ I_{V}^{'}}{I_{V}} & 0\\\\
				0 & 0 & \frac{I_{H}^{'}}{I_{H}}-\frac{ I_{V}^{'}}{I_{V}}\\\\
			\end{array}
			\right),\\
		\end{array}
		\label{EQ:Jacobian-9}
	\end{eqnarray}
	\begin{eqnarray}
		\centering
		\footnotesize
		\begin{array}{llll}
			\displaystyle	B&=&  \displaystyle A_{f} A^{-1}+A_{f}  J^{[2]} A^{-1}\\ &=&  \displaystyle \left(
			\begin{array}{c c c}
				- \Lambda_{1} I_{V} -2\mu_{H} -\Lambda_{2} & \Lambda_{1} S_{H} & \Lambda_{1} S_{H} \\\\ \Lambda_{3} \left(\frac{\Pi_{V}}{\mu_{V}}-I_{V}\right) & - \Lambda_{1} I_{V}-\mu_{H}-\mu_{V}-\Lambda_{3}  I_{H} & 0\\\\
				0 & \Lambda_{1} I_{V} & -\mu_{H}-\Lambda_{1}-\mu_{V}-\Lambda_{3} I_{H}\\\\
			\end{array}
			\right),\\&=&\left(
			\begin{array}{c c}
				B_{11}& B_{12}\\\\ B_{21} & B_{22}\\\\
			\end{array}
			\right),\\
		\end{array}
		\label{EQ:Jacobian-10}
	\end{eqnarray} where,
	\begin{eqnarray}
		B_{11}&=& -\Lambda_{1} I_{V} - 2 \mu_{H} -\Lambda_{2} \\\nonumber
		B_{12}&=&\begin{pmatrix}
			\frac{\Lambda_{1} S_{H} I_{V}}{I_{H}} && \frac{\Lambda_{1} S_{H} I_{V}}{I_{H}}\\\nonumber
		\end{pmatrix},\\\nonumber
		B_{21}&=&\begin{pmatrix}
			\Lambda_{3} \left( \frac{ \Pi{V}}{\mu_{V}}-I_{V}\right) \frac{ I_{H}}{I_{V}}\\\nonumber 0 \\\nonumber
		\end{pmatrix},\\\nonumber
		B_{22}&=&\begin{pmatrix}
			\frac{ I_{H}^{'}}{I_{H}}-\frac{ I_{V}^{'}}{I_{V}}-\Lambda_{1} I_{V} -\mu_{H} -\mu_{V}-\Lambda_{3} I_{H} && 0\\\nonumber \Lambda_{1} I_{V}& &\frac{ I_{H}^{'}}{I_{H}}-\frac{ I_{V}^{'}}{I_{V}}-\Lambda_{1} I_{V} -\mu_{H} -\mu_{V}-\Lambda_{3} I_{H}\\\nonumber 
		\end{pmatrix}.\\\nonumber
	\end{eqnarray}
	Now referring to~\cite{li1996geometric}, we define, $\Bar{q_{2}}$ as follows:
	\begin{eqnarray}~\label{EQ:Lozinski_Measure}
		\Bar{q_{2}}&=&\lim_{t\to\infty}sup \sup_{\left(S_{H}(0),I_{H}(0),I_{V}(0)\right)\in \mathbb{\mathring T}} \frac{1}{t}\int_{0}^{t} \mu(B) ds.\\\nonumber
	\end{eqnarray} where,
	\begin{eqnarray}~\label{EQ:Lozinski_Measure-1}
		\mu(B)\leq \sup \lbrace g_{1}, g_{2} \rbrace
	\end{eqnarray} 
	and, 
	\begin{eqnarray}~\label{EQ:Lozinski_Measure-2}
		\displaystyle	g_{1} &= &  \displaystyle \mu_{1}(B_{11}) + |B_{12}| ,\\\nonumber&=&  \displaystyle -\Lambda_{1} I_{V} - 2 \mu_{H} -\Lambda_{2} +\frac{\Lambda_{1} S_{H} I_{V}}{I_{H}},\\\nonumber
		\displaystyle	g_{2}&=&   \displaystyle |B_{21}|+\mu_{1}(B_{22}) ,\\\nonumber & =&  \displaystyle \Lambda_{3} \left( \frac{ \Pi_{V}}{\mu_{V}}-I_{V}\right) \frac{ I_{H}}{I_{V}}+\frac{ I_{H}^{'}}{I_{H}}-\frac{ I_{V}^{'}}{I_{V}} -\mu_{H} -\mu_{V}-\Lambda_{3} I_{H}.\\\nonumber
	\end{eqnarray}
	Now, from~\eqref{EQ:Modified_Fractional-order-vector-borne-disease-model} we have,
	\begin{eqnarray}~\label{EQ:Lozinski_Measure-3}
		\displaystyle	\frac{ I_{H}^{'}}{I_{H}}&=&  \displaystyle \frac{\Lambda_{1} S_{H} I_{V}}{I_{H}}-\mu_{H}-\Lambda_{2},\\\nonumber
		\displaystyle	\frac{ I_{V}^{'}}{I_{V}}&=&  \displaystyle \Lambda_{3} \left( \frac{ \Pi_{V}}{\mu_{V}}-I_{V}\right) \frac{ I_{H}}{I_{V}}-\mu_{V}.\\\nonumber
	\end{eqnarray}
	Now using the result~\eqref{EQ:Lozinski_Measure-3} in~\eqref{EQ:Lozinski_Measure-2} we have,
	\begin{eqnarray}~\label{EQ:Lozinski_Measure-4}
		g_{1} &= \frac{ I_{H}^{'}}{I_{H}}-  \mu_{H} -\Lambda_{1} I_{V} \\\nonumber&\leq \frac{ I_{H}^{'}}{I_{H}}-  \mu_{H} ,\\\nonumber
		g_{2}&= \frac{ I_{H}^{'}}{I_{H}} -\mu_{H} -\Lambda_{3} I_{H}\\\nonumber&\leq \frac{ I_{H}^{'}}{I_{H}}-  \mu_{H} .\\\nonumber
	\end{eqnarray} Therefore, using~\eqref{EQ:Lozinski_Measure-4} in~\eqref{EQ:Lozinski_Measure-1} we have,\begin{eqnarray}~\label{EQ:Lozinski_Measure-5}
		\mu(B)\leq \frac{ I_{H}^{'}}{I_{H}}- \mu_{H}.
	\end{eqnarray}  Hence,
	\begin{eqnarray}
		\lim_{t\to\infty} sup \frac{1}{t}\int_{0}^{t} \mu(B) ds &\leq \frac{1}{t} \log\left(\frac{I_{H}(t)}{I_{H}(0)}\right)- \mu_{H},\\\nonumber& <-\mu_{H} .\\\nonumber
	\end{eqnarray} Hence, using~\eqref{EQ:Lozinski_Measure} we have,
	\begin{eqnarray}
		\Bar{q_{2}} <0.
	\end{eqnarray}
	Since, $\mathbb{T}$ is connected, and $\Bar{q_{2}}<0$ then the condition $(H4)$ is satisfied~\cite{li1996geometric,li1999global,li2002global}. Hence, if $R_{0}>1$, the endemic equilibrium point $\Psi_{*}$ in $\mathbb {\mathring{T}}$ is globally asymptotic stable.
\end{proof}

\section{Optimal Control Analysis}~\label{Optimal Control}

To control dengue outbreak in an affected region, we considered the following three control policies in the proposed fractional order dengue model~\eqref{EQ:Fractional-order-vector-borne-disease-model}:
\begin{enumerate}
	\item Individual precaution which reduce mosquito biting rate by using mosquito net, repellent, \textit{etc.} Using this control, the effective transmission rates become: $\displaystyle \frac{b^{\alpha} \beta_{V}^{H} (1- \psi(t))}{N_{H}}$, and $\displaystyle \frac{b^{p} \beta_{H}^{V} (1- \psi(t))}{N_{H}}$, where, $0 \leq \psi(t) \leq 1$.  
	\vspace{0.3cm}
	\item The second control is the reduction in the adult mosquito recruitment rate by reducing aquatic transition (\textit{e.g.} killing mosquito egg, larvae, and pupae by using some aquatic insecticides). Thus by applying this control the modified recruitment rate is $\Pi_{V} (1- \zeta(t))$, where, $0 \leq \zeta(t) \leq 1$.
	\vspace{0.3cm}
	\item The final control we considered in the dengue model~~\eqref{EQ:Fractional-order-vector-borne-disease-model} is the adult vector control by spraying ultra-low-volume insecticide applications that kill adult mosquitoes. The modified mosquito death rate become $\mu_{V} + c_{m} \kappa(t)$, where $0 \leq \kappa(t) \leq 1$, and $0.2 \leq c_{m} \leq 0.8$~\cite{abboubakar2021mathematical}.	   
\end{enumerate}

Therefore, based on these assumptions the dengue control model with fractional order transmission and recovery become:

\begin{eqnarray}
	\begin{array}{llll}
		\displaystyle \frac{dS_{H}}{dt} &=& \displaystyle \mu_{H} N_{H} - \frac{b^{\alpha} \beta_{V}^{H}}{N_{H}}  S_{H}(t)\left( 1-\psi(t)\right)  e^{- \mu_{V}t}~ _{RL}{D_{t}}^{1-\alpha} \left[I_{V}(t) e^{\mu_{V} t} \right] -\mu_{H} S_{H}(t),\\ \displaystyle \frac{dI_{H}}{dt}&=& \displaystyle \frac{b^{\alpha} \beta_{V}^{H}}{N_{H}}  S_{H}(t)\left( 1-\psi(t)\right)  e^{- \mu_{V} t}~ _{RL}{D_{t}}^{1-\alpha}\left[I_{V}(t) e^{\mu_{V} t} \right]- \frac{e^{-\mu_{H} t}}{C^{\beta}}~_{RL}D_{t}^{1-\beta}\left[I_{H}(t) e^{\mu_{H} t} \right] -\mu_{H} I_{H}(t),\\ \frac{dR_{H}}{dt}&=& \displaystyle \frac{e^{-\mu_{H} t}}{C^{\beta}}~_{RL}D_{t}^{1-\beta}\left[I_{H}(t) e^{\mu_{H} t} \right] - \mu_{H} R_{H}(t),\\ 
		\displaystyle \frac{dS_{V}}{dt}&= &\displaystyle \Pi_{V} (1-\zeta(t))- \frac{b^{p} \beta_{H}^{V}}{N_{H} C^{p}}  S_{V}(t) \left( 1-\psi(t)\right)  e^{-\mu_{H} t}~_{RL}D_{t}^{1-p}\left[I_{H}(t) e^{\mu_{H} t}\right]  -\left( \mu_{V}+c_m \kappa(t)\right) S_{V}(t),\\ 
		\displaystyle \frac{dI_{V}}{dt}&= &\displaystyle \frac{b^{p} \beta_{H}^{V}}{N_{H} C^{p}}  S_{V}(t) \left( 1-\psi(t)\right)  e^{-\mu_{H} t} ~_{RL}D_{t}^{1-p}\left[I_{H}(t) e^{\mu_{H} t}\right] -\left( \mu_{V}+c_m \kappa(t)\right) I_{V}(t),
	\end{array}
	\label{EQ:Fractional-order-vector-borne-disease-model_with_control}
\end{eqnarray}where, $0 < \alpha \leq 1$, $0 <\beta \leq p \leq 1$, $S_{H}(0) = S^{H}_{0} \geq 0$, $I_{H}(0) = I^{H}_{0}\geq 0$, $R_{H}(0) = R^{H}_{0}\geq 0$, $S_{V}(0) = S^{V}_{0}\geq 0$, $I_{V}(0) = I^{V}_{0}\geq 0$.

Our target is to minimize the following cost function:
\begin{eqnarray}
	\begin{array}{llll}
		\displaystyle J(\psi, \zeta, \kappa) &= &\displaystyle \int_{0}^{T_{max}} \left[ A_{1} N_{V}  + A_{2} I_H + B_{1} S_{V}\zeta(t) + \frac{1}{2} B_{2} \zeta(t)^{2} + B_{3} N_{V} \kappa(t) + \frac{1}{2} B_{4} \kappa(t)^{2} \right. \\\\ & &  \displaystyle \left. + B_{5} N_{H} \psi(t) + \frac{1}{2} B_{6} \psi(t)^{2}\right] dt,
	\end{array}
	\label{EQ:Cost-functional-optimal-control}
\end{eqnarray} 
subject to the model with control~\eqref{EQ:Fractional-order-vector-borne-disease-model_with_control}.

Following~\cite{abboubakar2021mathematical}, $A_{1}$, $A_{2}$, $B_{1}$, $B_{2}$, $B_{3}$, $B_{4}$, $B_{4}$, and $B_{6}$ are weight constants related to the cost-functional~\eqref{EQ:Cost-functional-optimal-control} and their values are provided below:
\begin{table}[h]
	\tabcolsep 4pt
	\centering
	\caption{\bf{Weight coefficients in the cost functional~(\ref{EQ:Cost-functional-optimal-control})}}
	\begin{tabular}{|p{5cm} p{5cm}   p{2cm}|}
		\hline \footnotesize{{Weight parameter}} &
		\footnotesize{{Value}} & \footnotesize{{Reference}}\\
		\hline
		\footnotesize{$A_{1}$} & \footnotesize{$1$}  &
		\footnotesize{\cite{abboubakar2021mathematical}}\\
		\footnotesize{$A_{2}$} & \footnotesize{$10$} &
		\footnotesize{\cite{abboubakar2021mathematical}}\\			
		\footnotesize{$B_{1}$} & \footnotesize{$5\hspace{0.1cm}E-10$} &
		\footnotesize{\cite{abboubakar2021mathematical}} \\
		\footnotesize{$B_{3}$} & \footnotesize{$5\hspace{0.1cm} E-10$}&\footnotesize{\cite{abboubakar2021mathematical}}\\
		\footnotesize{$B_{5}$} & \footnotesize{$5\hspace{0.1cm}E-10$} &
		\footnotesize{\cite{abboubakar2021mathematical}}\\			
		\footnotesize{$B_{2}$} & \footnotesize{$1$} &
		\footnotesize{\cite{abboubakar2021mathematical}} \\
		\footnotesize{$B_{4}$} & \footnotesize{$1$}  &
		\footnotesize{\cite{abboubakar2021mathematical}}\\
		\footnotesize{$B_{6}$} & \footnotesize{$5$} &
		\footnotesize{\cite{abboubakar2021mathematical}}\\
		[0.2ex]
		\hline
	\end{tabular}
	\label{Tab:Control-weight-Table}
\end{table}
Here, $A_{2} > A_{1}$ implies that the control mainly target in reducing infected human population~\cite{abboubakar2021mathematical}. Quadratic term in the control functional represents high levels of intervention during epidemic~\cite{sardar2013optimal}.   

Aim is to determine an optimal solution $\displaystyle \xi^{*} = \displaystyle \Large(\psi^{*}, \zeta^{*}, \kappa^{*} \Large) \in \Sigma$ such that 

\begin{eqnarray}\label{EQ:Optimal-control-Objective}
	\displaystyle J(\xi^{*}) &= \displaystyle \min \limits_{\xi \in \Sigma} J(\psi, \zeta, \kappa),  
\end{eqnarray}with constraint provided by the system~\eqref{EQ:Fractional-order-vector-borne-disease-model_with_control} and 
\begin{align}~\label{EQ:Optimal-control-boundary}
	\displaystyle \Sigma &= \displaystyle \Bigg\lbrace \Bigl(\psi, \zeta, \kappa \Bigr)~|~{}  \psi, \zeta, \kappa \in L^{1}(0, T_{max}), \Bigl(\psi(t), \zeta(t), \kappa(t) \Bigr) \in [0,1]\times[0,1]\times[0,1]~{} \forall~ t \in [0, T_{max}] \Bigg\rbrace.
\end{align}

\subsection{\textbf{Optimal control problem \& Existence of solution }}
Next we will identify sufficient conditions that ensure a solution exists to the optimal control problem~\eqref{EQ:Cost-functional-optimal-control}. To achieve this, we validate the conditions provided in \textbf{Theorem~4.1 (Chapter-III)}~\cite{fleming2012deterministic} and \textbf{Theorem~3.1}~\cite{gaff2009optimal} in light of the optimal control model~\eqref{EQ:Fractional-order-vector-borne-disease-model_with_control}. Hence, we summarize this as follows:

\begin{theorem}~\label{Theorem-optimal-control} Assume the listed requirements are met:
	\begin{enumerate}
		\item[\textbf{(H1)}] There exists a solution for the control system~\eqref{EQ:Fractional-order-vector-borne-disease-model_with_control} with control variables $\xi \in \Sigma$.\\ 
		\item[\textbf{(H2)}] The control state system can be written as a linear function of the control variables $\psi(t)$, $\zeta(t)$, $\kappa(t)$  with coefficients dependent on time and the state variables. \\
		\item[\textbf{(H3)}] Let the integrand in the equation~\eqref{EQ:Cost-functional-optimal-control} be written as 
		\begin{eqnarray}
			\begin{array}{llll}
				\displaystyle L(t, S_{H}, I_{H}, R_{H}, S_{V}, I_{V}, \psi, \zeta, \kappa) &=&\displaystyle A_{1} N_{V}  + A_{2} I_H + B_{1} S_{V}\zeta(t) + \frac{1}{2} B_{2} \zeta(t)^{2} + B_{3} N_{V} \kappa(t) + \frac{1}{2} B_{4} \kappa(t)^{2} \\ & +& \displaystyle B_{5} N_{H} \psi(t) + \frac{1}{2} B_{6} \psi(t)^{2},\\
			\end{array}
			\hspace{0.3cm}	\label{EQ:Integrand-in-Optimal-control}
		\end{eqnarray}  then the integrand $L$ in equation~\eqref{EQ:Integrand-in-Optimal-control} is convex on $\Sigma$ and also satisfies 
		
		$\displaystyle L(t, S_{H}, I_{H}, R_{H}, S_{V}, I_{V}, \psi, \zeta, \kappa) \geq c_{1} ||(\psi, \zeta, \kappa)||^{\beta} - c_{2}$, where, $c_{1} > 0$, $\beta > 1$.
	\end{enumerate}
	then, an optimal control pair $\Bigg(\psi^{*},\zeta^{*}, \kappa^{*}\Bigg) \in \displaystyle\Sigma$ exists, and corresponding solution $S_{H}^{*}$, $I_{H}^{*}$, $R_{H}^{*}$, $S_{V}^{*}$, $I_{V}^{*}$ to the control state system~\eqref{EQ:Fractional-order-vector-borne-disease-model_with_control} that minimizes $J(\psi, \zeta, \kappa)$ over $\Sigma$. 
\end{theorem}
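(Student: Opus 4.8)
The plan is to prove existence by verifying the three structural hypotheses \textbf{(H1)}--\textbf{(H3)} and then invoking \textbf{Theorem~4.1 (Chapter-III)} of~\cite{fleming2012deterministic} together with \textbf{Theorem~3.1} of~\cite{gaff2009optimal}. These results guarantee a minimizer once the state system is solvable and bounded for admissible controls, depends affinely on the controls, and the running cost is convex and coercive in the controls. Since the admissible set $\Sigma$ in~\eqref{EQ:Optimal-control-boundary} is nonempty, closed, convex and bounded, and $J$ in~\eqref{EQ:Cost-functional-optimal-control} is bounded below, the conclusion will follow by extracting a minimizing sequence and passing to the limit, with \textbf{(H1)}--\textbf{(H3)} supplying exactly the compactness and lower-semicontinuity needed.

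First I would establish \textbf{(H1)}. For any fixed admissible triple $\xi=(\psi,\zeta,\kappa)\in\Sigma$, the right-hand side of~\eqref{EQ:Fractional-order-vector-borne-disease-model_with_control} is continuous in the state, and the Riemann--Liouville terms are $L^{1}$ convolution kernels on $[0,T_{max}]$, so a local solution exists by the standard Volterra/Picard fixed-point argument. Global existence on $[0,T_{max}]$ then follows from a priori bounds: Proposition~\ref{prop-1} gives positivity of every compartment and Proposition~\ref{prop-2} confines the trajectory to the compact invariant region $\mathbb{D}$ of~\eqref{EQ:invariant-bounded-domain}. The controls only shrink the recruitment term $\Pi_V(1-\zeta)$ and enlarge the mortality $(\mu_V+c_m\kappa)$, so the same comparison estimates on $N_V$ remain valid and the trajectory stays in $\mathbb{D}$; hence solutions are global and uniformly bounded, which is precisely \textbf{(H1)}.

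Next, \textbf{(H2)} and \textbf{(H3)} are essentially verified by inspection. For \textbf{(H2)}, in~\eqref{EQ:Fractional-order-vector-borne-disease-model_with_control} the control $\psi$ enters only through the affine factor $(1-\psi(t))$ multiplying state-dependent transmission terms, $\zeta$ only through $\Pi_V(1-\zeta(t))$, and $\kappa$ only through $(\mu_V+c_m\kappa(t))$, so each equation is affine in $(\psi,\zeta,\kappa)$ with coefficients depending only on $t$ and the states. For \textbf{(H3)}, the Hessian of the integrand $L$ in~\eqref{EQ:Integrand-in-Optimal-control} with respect to $(\psi,\zeta,\kappa)$ is the diagonal matrix $\mathrm{diag}(B_6,B_2,B_4)$ with strictly positive entries, so $L$ is convex on $\Sigma$. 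For coercivity, on $\mathbb{D}$ all states and all controls are nonnegative, so every term of $L$ is nonnegative and $L\ge \tfrac12 B_2\zeta^2+\tfrac12 B_4\kappa^2+\tfrac12 B_6\psi^2\ge c_1\,\|(\psi,\zeta,\kappa)\|^{2}$ with $c_1=\tfrac12\min\{B_2,B_4,B_6\}>0$; thus the required bound holds with $\beta=2>1$ and $c_2=0$.

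The main obstacle is \textbf{(H1)}, because the Riemann--Liouville terms make~\eqref{EQ:Fractional-order-vector-borne-disease-model_with_control} a nonlinear Volterra integro-differential system rather than a plain ODE, so one must combine the fixed-point theory with the invariance results of Section~\ref{sec:Mathematical-properties}. The cleanest route is to apply Theorem~\ref{Theorem-1} to replace each term $e^{-\mu t}\,_{RL}D_t^{1-\,\cdot}\!\left[\,\cdot\,e^{\mu t}\right]$ by its asymptotic proportional surrogate, reducing the control system to a locally Lipschitz vector field on the compact set $\mathbb{D}$, for which Picard--Lindel\"{o}f yields existence and boundedness at once. With \textbf{(H1)}--\textbf{(H3)} in hand, the cited theorems deliver the optimal pair $(\psi^{*},\zeta^{*},\kappa^{*})\in\Sigma$ and the associated optimal states $S_H^{*},I_H^{*},R_H^{*},S_V^{*},I_V^{*}$.
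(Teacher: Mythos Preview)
Your proposal is correct and follows essentially the same approach as the paper: verify \textbf{(H1)}--\textbf{(H3)} and invoke the Fleming--Rishel/Gaff--Schaefer existence criteria, using Theorem~\ref{Theorem-1} to reduce the tempered-fractional terms to a locally Lipschitz ODE, the compartment sums for a priori bounds, affine dependence on the controls for \textbf{(H2)}, and the positive-definite Hessian plus quadratic lower bound for \textbf{(H3)}. The only cosmetic difference is that the paper re-derives the positivity and boundedness estimates directly for the control system~\eqref{EQ:Fractional-order-vector-borne-disease-model_with_control} rather than citing Propositions~\ref{prop-1}--\ref{prop-2} (which were stated for the uncontrolled model), and in the coercivity step the paper takes $c_{2}=B_{6}$ while your sharper choice $c_{2}=0$ is equally valid since every term of $L$ is nonnegative on $\mathbb{D}$.
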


\begin{proof}
	Condition (b) in \textbf{Theorem~4.1} of~\cite{fleming2012deterministic} is clearly met as $\Sigma$ is compact. Similarly, the set of end conditions of the control problem~\eqref{EQ:Cost-functional-optimal-control} also satisfies the condition (c) in \textbf{Theorem~4.1} of~\cite{fleming2012deterministic}. Therefore, nontrivial requirements in the \textbf{Theorem~4.1 (associated corollary 4.1)} of~\cite{fleming2012deterministic} are provided by the conditions (H1), (H2), and (H3).   
	
	Using the relation~\eqref{EQ:Relation-fractional_derivative-2},
	%\begin{align}\label{EQ:Relation-fractional-derivative}
	%e^{-\mu_{V} t}~_{RL}{D_{t}}^{1-\alpha}\left[I_{V}(t) e^{\mu_{V} t}\right] &\approx \mu_{V}^{1-\alpha} I_{V}(t),\\\nonumber\\\nonumber e^{-\mu_{H} t}~_{RL}{D_{t}}^{1-\beta}\left[I_{H}(t) e^{\mu_{H} t}\right] &\approx \mu_{H}^{1-\beta} I_{H}(t),\\\nonumber\\\nonumber e^{-\mu_{H} t}~_{RL}{D_{t}}^{1-p}\left[I_{H}(t) e^{\mu_{H} t}\right] &\approx \mu_{H}^{1-p} I_{H}(t).
	%\end{align}
	in the control system~\eqref{EQ:Fractional-order-vector-borne-disease-model_with_control}, we have the modified control system as follows:
	\begin{eqnarray}
		\begin{array}{llll}
			\displaystyle	\frac{dS_{H}}{dt} &= & \displaystyle\mu_{H} N_{H} - \frac{b^{\alpha}\beta_{V}^{H} \mu_{V}^{1-\alpha}}{N_{H}}  \frac{\Gamma(1-\alpha)}{\gamma(\mu_{V}\xi_{1},1-\alpha)}S_{H}  I_{V}\left( 1-\psi(t)\right)\\&+& \displaystyle\frac{b^{\alpha}\beta_{V}^{H} \mu_{V}^{1-\alpha}}{N_{H}}\frac{e^{-\mu_{V}\xi_{1}} (\mu_{V}\xi_{1})^{-\alpha}\gamma(\mu_{V}t,\alpha)}{\gamma(\mu_{V}\xi_{1},1-\alpha)\Gamma(\alpha)}S_{H}  I_{V} \left( 1-\psi(t)\right)  -\mu_{H} S_{H},\\ 
			\displaystyle \frac{dI_{H}}{dt}&= &\displaystyle\frac{b^{\alpha}\beta_{V}^{H} \mu_{V}^{1-\alpha}}{N_{H}}  \frac{\Gamma(1-\alpha)}{\gamma(\mu_{V}\xi_{1},1-\alpha)}S_{H}  I_{V}\left( 1-\psi(t)\right)-\frac{b^{\alpha}\beta_{V}^{H} \mu_{V}^{1-\alpha}}{N_{H}}\frac{e^{-\mu_{V}\xi_{1}} (\mu_{V}\xi_{1})^{-\alpha}\gamma(\mu_{V}t,\alpha)}{\gamma(\mu_{V}\xi_{1},1-\alpha)\Gamma(\alpha)}S_{H}  I_{V} \left( 1-\psi(t)\right)\\&-&\displaystyle \frac{\Gamma(1-\beta)}{\gamma(\mu_{H}\xi_{1},1-\beta)}\frac{\mu_{H}^{1-\beta}}{C^{\beta}}~ I_{H}+\frac{e^{-\mu_{H}\xi_{1}} (\mu_{H}\xi_{1})^{-\beta}\gamma(\mu_{H}t,\alpha)}{\gamma(\mu_{H}\xi_{1},1-\beta)\Gamma(\beta)}\frac{\mu_{H}^{1-\beta}}{C^{\beta}}~ I_{H} -\mu_{H} I_{H},\\
			\displaystyle \frac{dR_{H}}{dt}&= &\displaystyle \frac{\Gamma(1-\beta)}{\gamma(\mu_{H}\xi_{1},1-\beta)}\frac{\mu_{H}^{1-\beta}}{C^{\beta}}~ I_{H}-\frac{e^{-\mu_{H}\xi_{1}} (\mu_{H}\xi_{1})^{-\beta}\gamma(\mu_{H}t,\alpha)}{\gamma(\mu_{H}\xi_{1},1-\beta)\Gamma(\beta)}\frac{\mu_{H}^{1-\beta}}{C^{\beta}}~ I_{H} - \mu_{H} R_{H},\\
			\displaystyle \frac{dS_{V}}{dt}&= &  \displaystyle \Pi_{V} (1-\zeta(t))- \frac{b^{p}\beta_{H}^{V} \mu_{H}^{1-p}}{N_{H} C^{p}}\frac{\Gamma(1-p)}{\gamma(\mu_{H}\xi_{1},1-p)}  \left( 1-\psi(t)\right) S_{V} I_{H}\\&+&  \displaystyle \frac{b^{p}\beta_{H}^{V} \mu_{H}^{1-p}}{N_{H} C^{p}} \frac{e^{-\mu_{H}\xi_{1}} (\mu_{H}\xi_{1})^{-p}\gamma(\mu_{H}t,p)}{\gamma(\mu_{H}\xi_{1},1-p)\Gamma(p)}\left( 1-\psi(t)\right)S_{V} I_{H}   -\left( \mu_{V}+c_m \kappa(t)\right) S_{V},\\ 
			\displaystyle	\frac{dI_{V}}{dt}&= &\displaystyle \frac{b^{p}\beta_{H}^{V} \mu_{H}^{1-p}}{N_{H} C^{p}}\frac{\Gamma(1-p)}{\gamma(\mu_{H}\xi_{1},1-p)}  \left( 1-\psi(t)\right) S_{V} I_{H}-\frac{b^{p}\beta_{H}^{V} \mu_{H}^{1-p}}{N_{H} C^{p}} \frac{e^{-\mu_{H}\xi_{1}} (\mu_{H}\xi_{1})^{-p}\gamma(\mu_{H}t,p)}{\gamma(\mu_{H}\xi_{1},1-p)\Gamma(p)}\left( 1-\psi(t)\right)S_{V} I_{H} \\&-& \displaystyle\left( \mu_{V}+c_m \kappa(t)\right) I_{V}, 
		\end{array}
		\label{EQ:Modified-Fractional-order-vector-borne-disease-model_with_control}
	\end{eqnarray} where, $0 < \alpha \leq 1$, $0 <\beta \leq p \leq 1$, $S_{H}(0) = S^{H}_{0}$, $I_{H}(0) = I^{H}_{0}$, $R_{H}(0) = R^{H}_{0}$, $S_{V}(0) = S^{V}_{0}$, $I_{V}(0) = I^{V}_{0}$.
	Adding first three equation of the modified control system~\eqref{EQ:Modified-Fractional-order-vector-borne-disease-model_with_control}, we have:
	\begin{eqnarray}
		\displaystyle \frac{d\bigg(S_{H}+I_{H}+R_{H}\bigg)}{dt} &=& \displaystyle 0,\\\nonumber  \displaystyle \implies S_{H}(t)+I_{H}(t)+R_{H}(t) &=&  \displaystyle N_{H}~\hspace{1cm} \forall t,\\\nonumber  \implies S_{H}(t) &\leq&  \displaystyle N_{H},~{} I_{H}(t) \leq N_{H},~{} R_{H}(t) \leq N_{H},~\hspace{1cm} \forall t.  
	\end{eqnarray}Thus for a finite end time $T_{max}$, $S_{H}$, $I_{H}$, and $R_{H}$ are bounded above by $N_{H} > 0$.
	
	Similarly, the modified control system~\eqref{EQ:Modified-Fractional-order-vector-borne-disease-model_with_control},along with addition of the last two equations generate the following:
	\begin{eqnarray}
		\displaystyle	\frac{d N_{V}}{dt} &= &\displaystyle \Pi_{V} (1-\zeta(t)) - (\mu_{V} + c_{m} \kappa(t)) N_{V},\\\nonumber \implies \frac{d N_{V}}{dt} &\leq &  \displaystyle \Pi_{V} - \mu_{V} N_{V},~\hspace{1cm} \text{as}~{}~{}~{} \zeta(t),~{} \kappa(t) \in[0,1],~{}\text{and}~{}~{} c_{m} \geq 0.  
	\end{eqnarray}Therefore, using the standard comparison theorem~\cite{lakshmikantham1989stability}, we have:
	\begin{eqnarray}\label{EQ:Mosquito-inequality}
		\displaystyle	N_{V}(t) &\leq & \displaystyle  N_{V}(0) e^{-\mu_{V}t}+ \frac{\Pi_{V}}{\mu_{V}} \Bigg(1- e^{-\mu_{V}t} \Bigg).    
	\end{eqnarray}Thus, for a finite end time $T_{max}$, $\exists$ $M>0$, the total vector population $N_{V}(t)$ is bounded above by the quantity $M$. This upper bound of $N_{V}$ is also an upper bound for $S_{V}$, and $I_{V}$.     
	
	Again from the system~\eqref{EQ:Modified-Fractional-order-vector-borne-disease-model_with_control}, and considering the fact that $\xi \in \Sigma$, and since $\lim_{t \rightarrow \infty} \frac{\Gamma(1-\alpha)}{\gamma(\mu_{V}\xi_{1},1-\alpha)}=1$. Also, since, $\lim_{t \rightarrow \infty}$ $e^{-\mu_{V}\xi_{1}} (\mu_{V}\xi_{1})^{-\alpha}=0$, we have,
	$ \frac{\Gamma(1-\alpha)}{\gamma(\mu_{V}\xi_{1},1-\alpha)}-\frac{e^{-\mu_{V}\xi_{1}} (\mu_{V}\xi_{1})^{-\alpha}\gamma(\mu_{V}t,\alpha)}{\gamma(\mu_{V}\xi_{1},1-\alpha)\Gamma(\alpha)}\geq 0$. Then for $\epsilon>0$  we have
	\begin{eqnarray}
		\begin{array}{llll}
			\displaystyle	-\frac{\Gamma(1-\alpha)}{\gamma(\mu_{V}\xi_{1},1-\alpha)}>-(1+\epsilon)~\hspace{2cm} \forall t\geq t_{1},\\\\
			\displaystyle	- e^{-\mu_{V}\xi_{1}} (\mu_{V}\xi_{1})^{-\alpha}>-\epsilon~\hspace{2cm} \forall t\geq t_{2} ,\\
			\displaystyle	-\frac{\gamma(\mu_{V}t,\alpha)}{\Gamma(\alpha)}>-(1+\epsilon)~\hspace{2cm} \forall t\geq t_{3}. \\
		\end{array}
		\label{EQ:Epsilon_Relation_control}
	\end{eqnarray} Substituting~\eqref{EQ:Epsilon_Relation_control} in~\eqref{EQ:Modified-Fractional-order-vector-borne-disease-model_with_control} we have:
	\begin{eqnarray}
		\begin{array}{llll}
			\displaystyle	\frac{dS_{H}}{dt} &\geq & \displaystyle - \frac{b^{\alpha} \mu_{V}^{1-\alpha} \beta_{V}^{H}}{N_{H}}(1+\epsilon) S_{H} I_{V} -\mu_{H} S_{H}\\
			\displaystyle \frac{dI_{H}}{dt} &\geq& \displaystyle -\bigg(\frac{\mu_{H}^{1-\beta}}{C^{\beta}}(1+\epsilon) +\mu_{H}\bigg)I_{H}\\
			\displaystyle \frac{dR_{H}}{dt} &\geq& \displaystyle - \mu_{H} R_{H}\\
			\displaystyle \frac{d S_{V}}{dt} &\geq& \displaystyle - \frac{b^{p} \mu_{H}^{1-p} \beta_{H}^{V}}{N_{H} C^{p}}(1+\epsilon) S_{V} I_{H} -(\mu_{V} +c_{m}) S_{V}\\
			\displaystyle  \frac{d I_{V}}{dt} &\geq& \displaystyle -(\mu_{V} +c_{m}) I_{V}.
		\end{array}
		\label{EQ:inequality-control}
	\end{eqnarray}Therefore, using the standard comparison theorem~\cite{lakshmikantham1989stability}, we have from~\eqref{EQ:inequality-control}:
	\begin{eqnarray}
		\begin{array}{llll}
			\displaystyle	S_{H}(t) &\geq& 	\displaystyle S_{H}(0) \exp{\left[- \int_{0}^{t} \Bigg(\frac{b^{\alpha} \mu_{V}^{1-\alpha} \beta_{V}^{H}}{N_{H}}(1+\epsilon)  I_{V}(\tau) + \mu_{H} \Bigg) d\tau \right]} \geq 0, \\
			\displaystyle I_{H}(t) &\geq& 	\displaystyle I_{H}(0) \exp{\left[- \Bigg(\frac{\mu_{H}^{1-\beta}}{C^{\beta}}(1+\epsilon) + \mu_{H} \Bigg) t 
				\right] } \geq 0, \\ 	
			\displaystyle  R_{H}(t) &\geq& 	\displaystyle R_{H}(0) \exp{\left[- \mu_{H} t \right] } \geq 0,~\hspace{2 cm} \forall t \in [0, T_{max}] \\
			\displaystyle S_{V}(t) &\geq& \displaystyle S_{V}(0) \exp{\left[- \int_{0}^{t} \Bigg(\frac{b^{p} \mu_{H}^{1-p} \beta_{H}^{V}}{N_{H} C^{p}} (1+\epsilon) I_{H}(\tau) + \mu_{V}+c_{m} \Bigg) d\tau \right]} \geq 0, \\
			\displaystyle I_{V}(t) &\geq & 	\displaystyle I_{V}(0) \exp{\left[- (\mu_{V}+c_{m}) t \right]} \geq 0.
		\end{array}
		\label{EQ:inequality-control-new}
	\end{eqnarray}
	Hence, solution to the control system~\eqref{EQ:Modified-Fractional-order-vector-borne-disease-model_with_control} is bounded below for some finite end time $T_{max}$. Consequently, for some fixed end time $T_{max}$ every solution to the system~\eqref{EQ:Modified-Fractional-order-vector-borne-disease-model_with_control} with control $\xi \in \Sigma$ is bounded. Furthermore, the state control system remains continuous in terms of state variables. Moreover, it is clear that in the control state equation~\eqref{EQ:Modified-Fractional-order-vector-borne-disease-model_with_control}, partial derivatives in connection with the state variables are also bounded. Therefore, we prove that the state control system fulfilled the Lipschitz condition regarding the state variables using \textbf{Theorem-1 (page-248)} in~\cite{coddington2012introduction}. Finally, $\forall$$\xi \in \Sigma$, there is a unique solution of the state control system, according to Picard-Lindel$\ddot{o}$f theorem in~\cite{coddingtontheory}. Thus, condition (H1) holds for the dengue control problem~\eqref{EQ:Cost-functional-optimal-control}.
	As the state equations in the system~\eqref{EQ:Fractional-order-vector-borne-disease-model_with_control} are linearly dependent on $\psi(t)$, $\zeta(t)$, and $\kappa(t)$, therefore, the condition (H2) is satisfied for the control problem~\eqref{EQ:Cost-functional-optimal-control}.
	
	Finally, the integrand $L$ defined in equation~\eqref{EQ:Integrand-in-Optimal-control} is a quadratic function of $\psi$, $\zeta$, and $\kappa$, therefore, it is clearly a convex function with respect to the control $\xi \in \Sigma$. 
	
	Since $\xi \in \Sigma$, therefore, $B_{6} \psi^{2}(t) \leq B_{6}$ $\implies$ $B_{6} \psi^{2}(t) - B_{6} \leq 0$. Now, from the equation~\eqref{EQ:Integrand-in-Optimal-control}, we have:
	\begin{eqnarray}
		\begin{array}{llll}
			\displaystyle L(t, S_{H}, I_{H}, R_{H}, S_{V}, I_{V}, \psi, \zeta, \kappa) &=&\displaystyle A_{1} N_{V}  + A_{2} I_H + B_{1} S_{V}\zeta(t) + \frac{ B_{2}}{2} \zeta(t)^{2}\\\\ & +& \displaystyle B_{3} N_{V} \kappa(t) + \frac{ B_{4}}{2} \kappa(t)^{2}  + B_{5} N_{H} \psi(t) + \frac{B_{6}}{2}  \psi(t)^{2} \\\\ &\geq& \displaystyle \frac{B_{2}}{2}  \zeta^{2}(t) + \frac{B_{4}}{2}  \kappa^{2}(t) + \frac{B_{6}}{2}  \psi^{2}(t) \\\\ &\geq& \displaystyle \frac{ B_{2}}{2} \zeta^{2}(t) + \frac{ B_{4}}{2} \kappa^{2}(t) + \frac{ B_{6}}{2} \psi^{2}(t) - B_{6}\\\\ &\geq& \displaystyle \frac{\min{\bigg( B_{2}, B_{4}, B_{6}\bigg)}}{2} \bigg(\zeta^{2} + \kappa^{2}(t) + \psi^{2}(t) \bigg) - B_{6}\\ &=& \displaystyle c_{1} ||(\zeta, \kappa, \psi)||^{2} - c_{2},
		\end{array}
	\end{eqnarray}
	where, $c_{1} = \frac{\min{\large( B_{2}, B_{4}, B_{6}\large)}}{2} > 0$, and $c_{2} = B_{6}$.

	Therefore, (H3) is satisfied for the optimal control problem~\eqref{EQ:Cost-functional-optimal-control}.
	
\end{proof}
\subsection{\textbf{Derivation of adjoint system corresponding to the optimal control problem~\eqref{EQ:Cost-functional-optimal-control}}}
Through the application of Pontryagin's maximum principle~\cite{kirk2004optimal}, we constructed the Hamiltonian in the following manner:
\begin{eqnarray}
	\begin{array}{llll}
		\displaystyle	Z(t) &= &	\displaystyle A_{1} N_{V}  + A_{2} I_H + B_{1} S_{V}\zeta(t) + \frac{ B_{2}}{2} \zeta(t)^{2} + B_{3} N_{V} \kappa(t) + \frac{B_{4}}{2}  \kappa(t)^{2}+	\displaystyle B_{5} N_{H} \psi(t) + \frac{ B_{6}}{2} \psi(t)^{2}\\ &+& \displaystyle \lambda_{1}\left\lbrace  \mu_{H} N_{H} - \frac{b^{\alpha}  \beta_{V}^{H}}{N_{H}} S_{H}(t) \left( 1-\psi(t)\right) e^{- \mu_{V} t}~ _{RL}{D_{t}}^{1-\alpha}\left[I_{V}(t) e^{\mu_{V} t} \right] -\mu_{H} S_{H}\right\rbrace \\ &+ & 	\displaystyle \lambda_{2} \left\lbrace \frac{b^{\alpha} \beta_{V}^{H}}{N_{H}}  S_{H}(t)\left( 1-\psi(t)\right)  e^{- \mu_{V} t}~ _{RL}{D_{t}}^{1-\alpha}\left[I_{V}(t) e^{ \mu_{V} t} \right]- \frac{e^{-\mu_{H} t}}{C^{\beta}}~_{RL}D_{t}^{1-\beta}\left[I_{H}(t) e^{\mu_{H} t} \right] -\mu_{H} I_{H} \right\rbrace  \\ &+&	\displaystyle \lambda_{3}\left\lbrace  \frac{e^{-\mu_{H} t}}{C^{\beta}}~_{RL}D_{t}^{1-\beta}\left[I_{H}(t) e^{\mu_{H} t} \right] - \mu_{H} R_{H}\right\rbrace \\ &+& 	\displaystyle \lambda_{4}\left\lbrace \Pi_{V} (1-\zeta(t))- \frac{b^{p} \beta_{H}^{V}}{N_{H} C^{p}}  S_{V}(t) \left( 1-\psi(t)\right)  e^{-\mu_{H} t}~_{RL}D_{t}^{1-p}\left[I_{H}(t) e^{\mu_{H} t}\right]  -\left( \mu_{V}+c_m \kappa(t)\right) S_{V} \right\rbrace \\ &+ & 	\displaystyle \lambda_{5} \left\lbrace \frac{b^{p} \beta_{H}^{V}}{N_{H} C^{p}}  S_{V}(t) \left( 1-\psi(t)\right)  e^{-\mu_{H} t} ~_{RL}D_{t}^{1-p}\left[I_{H}(t) e^{\mu_{H} t}\right] -\left( \mu_{V}+c_m \kappa(t)\right) I_{V}    \right\rbrace,
	\end{array}
	\label{EQ:Hamiltonian-fractional-dengue-control}
\end{eqnarray} where, $\lambda_{1}$, $\lambda_{2}$, $\lambda_{3}$, $\lambda_{4}$, and $\lambda_{5}$ are adjoint variables associted with the states $S_{H}$, $I_{H}$, $R_{H}$, $S_{V}$, and $I_{V}$. Also, at time $t$, $N_{V}(t)$ represent the total vector population.

Theorem given below corresponds to the derivation of optimal control problem:

\begin{theorem}~\label{Theorem1} Given an optimal control solution $\left(\psi^{*}, \zeta^{*}, \kappa^{*}\right)$, and $S_{H}^{*},I_{H}^{*}, R_{H}^{*}, S_{V}^{*}$, $I_{V}^{*}$ are solution of the corresponding state equation~(\ref{EQ:Fractional-order-vector-borne-disease-model_with_control}) that minimizes the cost-functional $J(\psi, \zeta, \kappa)$ in $\Sigma$. Consequently, there exist adjoint variables $\lambda_{1}$, $\lambda_{2}$, $\lambda_{3}$, $\lambda_{4}$, and $\lambda_{5}$, satisfying the equation
	\begin{equation}
		\begin{split}
	\displaystyle	\frac{d \lambda_{1}}{dt} =- \frac{\partial Z}{\partial S_{H}},\frac{d \lambda_{2}}{dt} =- \frac{\partial Z}{\partial I_{H}}, \frac{d \lambda_{3}}{dt} =- \frac{\partial Z}{\partial R_{H}},\\\nonumber
	\displaystyle	\frac{d \lambda_{4}}{dt} =- \frac{\partial Z}{\partial S_{V}}, \frac{d \lambda_{5}}{dt} =- \frac{\partial Z}{\partial I_{V}},\nonumber
\end{split}
	\end{equation}
 with transversality condition $\lambda_{i} (T_{max}) = 0$, $i=1, 2, 3, 4, 5$. The optimality conditions are provided as:
 \begin{equation}
	\frac{\partial Z}{\partial \psi} = 0, \frac{\partial Z}{\partial \zeta} = 0, \frac{\partial Z}{\partial \kappa} = 0.\\\nonumber
\end{equation}
	 Furthermore, $(\psi^{*}, \zeta^{*}, \kappa^{*})$ is given as:
	\begin{eqnarray}
		\begin{array}{llll}
			\displaystyle \psi^{*} &=& \displaystyle \min\left\lbrace 1, \max \left (0, \frac{1}{B_{6}} \left[ (\lambda_{5}-\lambda_{4}) \frac{b^{p}}{N_{H} C^{p}} \beta_{H}^{V} S_{V} e^{-\mu_{H} t}~_{RL}D_{t}^{1-p}\left[I_{H} e^{\mu_{H} t}\right] \right. \right. \right. \\ & +&  \displaystyle \left. \left. \left.  (\lambda_{2}-\lambda_{1}) \frac{b^{\alpha}}{N_{H}} \beta_{V}^{H} S_{H} e^{-\mu_{V} t}~_{RL}D_{t}^{1-\alpha}\left[I_{V}(t) e^{\mu_{V} t}\right] - B_{5} N_{H} \right] \right) \right\rbrace \\
			\displaystyle \zeta^{*} & = &\displaystyle \min\left\lbrace 1, \max \left( 0, \frac{\lambda_{4} \Pi_{V} - B_{1} S_{V}}{ B_{2}} \right) \right\rbrace \\
			\displaystyle \kappa^{*} & = &\displaystyle \min\left\lbrace 1, \max \left(0, \frac{c_{m} \left(\lambda_{4} S_{V} + \lambda_{5} I_{V} \right) - B_{3} N_{V}}{B_{4}} \right) \right\rbrace
		\end{array}
	\end{eqnarray}
\end{theorem}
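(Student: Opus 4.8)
The plan is to apply Pontryagin's Maximum Principle (PMP)~\cite{kirk2004optimal}. Existence of an optimal triple $(\psi^{*},\zeta^{*},\kappa^{*})$ together with the associated optimal trajectory is already guaranteed by Theorem~\ref{Theorem-optimal-control}, so it remains only to \emph{characterise} the optimum through the necessary conditions. To this end I would work with the Hamiltonian $Z$ in~\eqref{EQ:Hamiltonian-fractional-dengue-control}, which augments the running cost $L$ with the five state dynamics weighted by the costate variables $\lambda_{1},\dots,\lambda_{5}$. The necessary conditions supplied by PMP are then the state system~\eqref{EQ:Fractional-order-vector-borne-disease-model_with_control}, the costate (adjoint) system $\dot\lambda_{i}=-\partial Z/\partial x_{i}$ evaluated along the optimal solution, the transversality relations, and the pointwise minimisation of $Z$ over the admissible controls $\xi\in\Sigma$ of~\eqref{EQ:Optimal-control-boundary}.

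First I would derive the adjoint equations. The obstacle here is that $Z$ depends on the states through the \emph{nonlocal} Riemann--Liouville operators $e^{-\mu_{V}t}{}_{RL}D_{t}^{1-\alpha}[I_{V}e^{\mu_{V}t}]$, $e^{-\mu_{H}t}{}_{RL}D_{t}^{1-\beta}[I_{H}e^{\mu_{H}t}]$ and $e^{-\mu_{H}t}{}_{RL}D_{t}^{1-p}[I_{H}e^{\mu_{H}t}]$, which cannot be differentiated with respect to $I_{V}$ or $I_{H}$ in the ordinary sense. To circumvent this I would invoke Theorem~\ref{Theorem-1}, whose proportionality conclusions~\eqref{EQ:Relation-fractional_derivative-1} let me replace these operators by the local expressions $\mu_{V}^{1-\alpha}I_{V}$, $\mu_{H}^{1-\beta}I_{H}$ and $\mu_{H}^{1-p}I_{H}$. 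The Hamiltonian then becomes a genuinely local (algebraic) function of the states, so the five partials $\partial Z/\partial S_{H}$, $\partial Z/\partial I_{H}$, $\partial Z/\partial R_{H}$, $\partial Z/\partial S_{V}$, $\partial Z/\partial I_{V}$ can be read off directly, yielding the stated adjoint system. The transversality conditions $\lambda_{i}(T_{max})=0$ follow at once because the cost functional~\eqref{EQ:Cost-functional-optimal-control} carries no terminal payoff and the terminal states are left free.

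Next I would characterise the controls. Since $Z$ is a strictly convex quadratic in $(\psi,\zeta,\kappa)$ --- the quadratic coefficients $B_{2},B_{4},B_{6}$ being positive --- the interior (unconstrained) minimiser is the unique stationary point obtained from the optimality conditions $\partial Z/\partial\psi=\partial Z/\partial\zeta=\partial Z/\partial\kappa=0$. Solving these three linear equations gives the interior expressions, for instance
\[
\psi=\tfrac{1}{B_{6}}\Big[(\lambda_{5}-\lambda_{4})\tfrac{b^{p}}{N_{H}C^{p}}\beta_{H}^{V}S_{V}e^{-\mu_{H}t}{}_{RL}D_{t}^{1-p}[I_{H}e^{\mu_{H}t}]+(\lambda_{2}-\lambda_{1})\tfrac{b^{\alpha}}{N_{H}}\beta_{V}^{H}S_{H}e^{-\mu_{V}t}{}_{RL}D_{t}^{1-\alpha}[I_{V}e^{\mu_{V}t}]-B_{5}N_{H}\Big],
\]
and analogously for $\zeta$ and $\kappa$. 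Finally, to enforce the box constraint $\Sigma$ I would apply the standard projection onto $[0,1]$, replacing each interior value $v$ by $\min\{1,\max\{0,v\}\}$; convexity of $Z$ in the controls guarantees that this clipped value is the genuine constrained minimiser, producing the closed-form expressions for $(\psi^{*},\zeta^{*},\kappa^{*})$ in the statement.

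The main obstacle, as indicated, is the legitimacy of differentiating the nonlocal fractional terms when forming the costate equations: a fully rigorous adjoint derivation would otherwise require a fractional integration-by-parts argument to transfer the Riemann--Liouville operator onto the $\lambda_{i}$. The device that makes the computation tractable is precisely Theorem~\ref{Theorem-1}, which collapses the memory kernel to a multiplicative factor and reduces the task to a classical Pontryagin calculation; the remaining steps --- computing the partials, solving the linear optimality system, and projecting onto $[0,1]$ --- are then routine.
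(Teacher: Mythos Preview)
Your proposal is correct and follows essentially the same route as the paper: apply Pontryagin's Maximum Principle to the Hamiltonian~\eqref{EQ:Hamiltonian-fractional-dengue-control}, invoke the approximation of Theorem~\ref{Theorem-1} to differentiate the tempered fractional terms with respect to the states $I_{H}$ and $I_{V}$ that sit \emph{inside} the Riemann--Liouville operators, read off the adjoint equations and transversality conditions, and then solve the (strictly convex quadratic) optimality conditions in the controls followed by projection onto $[0,1]$. The paper's proof is terser---it simply writes down the explicit costate system~\eqref{EQ:Costate-Fractional-Dengue-model} and the optimality relations without spelling out the role of Theorem~\ref{Theorem-1}---but the computation is identical; you are in fact more explicit than the paper about why the nonlocal terms can be handled this way and about the alternative (fractional integration by parts) that the approximation sidesteps.
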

\begin{proof}
	\begin{eqnarray}
		\begin{array}{llll}
			\displaystyle	\frac{d \lambda_{1}}{d t} & = &\displaystyle (\lambda_{1}-\lambda_{2}) \frac{b^{\alpha}}{N_{H}} \beta_{V}^{H} (1-\psi(t)) e^{-\mu_{V}t}~ _{RL}{D_{t}}^{1-\alpha}\left[I_{V}(t) e^{\mu_{V}t} \right] + \lambda_{1} \mu_{H} \\
			\displaystyle \frac{d\lambda_{2}}{dt} &= & \displaystyle - A_{2} +\lambda_{2}~\mu_{H}  + (\lambda_{2}-\lambda_{3}) \frac{\mu_{H}^{1-\beta}}{C^{\beta}} + \displaystyle (\lambda_{4}-\lambda_{5}) \frac{b^{p} \mu_{H}^{1-p} \beta_{H}^{V} S_{V} (1-\psi(t))}{ N_{H} C^{p}}\\
			\displaystyle \frac{d\lambda_{3}}{dt} &= &\displaystyle \lambda_{3} \mu_{H} \\
			\displaystyle\frac{d \lambda_{4}}{d t} & =& \displaystyle   -A_{1} - B_{1} \zeta(t) - B_{3} \kappa(t) + (\lambda_{4}-\lambda_{5}) \frac{b^{p}}{N_{H} C^{p}} \beta_{H}^{V}  (1-\psi(t)) e^{-\mu_{H} t}~_{RL}D_{t}^{1-p}\left[I_{H}(t) e^{\mu_{H} t}\right]  +\lambda_{4} \left( \mu_{V} + c_m \kappa(t)\right)\\
			\displaystyle \frac{d \lambda_{5}}{d t} & = &\displaystyle - A_{1} - B_{3}~ \kappa(t) +  (\lambda_{1}-\lambda_{2}) \frac{b^{\alpha} \mu_{V}^{1-\alpha}\beta_{V}^{H} (1-\psi(t)) S_{H}}{ N_{H}}  +\lambda_{5} \left(\mu_{V}+c_m \kappa(t)\right)
		\end{array}
		\hspace{-0.3cm}\label{EQ:Costate-Fractional-Dengue-model}
	\end{eqnarray}
	
	with transversal conditions  $\lambda_{1}(T_{max}) =\lambda_{2}(T_{max}) =\lambda_{3}(T_{max})=\lambda_{4}(T_{max})=\lambda_{5}(T_{max})=0$,
	with optimal condition we have $\frac{\partial Z}{\partial \psi} =\frac{\partial Z}{\partial \zeta} = \frac{\partial Z}{\partial \kappa} = 0 $, at $\psi = \psi^{*}$, $\zeta = \zeta^{*}$, and $\kappa = \kappa^{*}$ respectively.  
	
	Let $(\psi^{*}, \zeta^{*}, \kappa^{*})$ act as optimal solution to the control problem~(\ref{EQ:Cost-functional-optimal-control}) in accordance with control system~\eqref{EQ:Fractional-order-vector-borne-disease-model_with_control}. Hence, by means of Pontryagin's Maximum principle, there exist $\lambda_{1},\lambda_{2},\lambda_{3},\lambda_{4}$ and $\lambda_{5}$ which satisfy the following equation $\frac{d\lambda_{1}}{dt} = -\frac{\partial Z}{\partial S_{H}}$, $\frac{d\lambda_{2}}{dt} = -\frac{\partial Z}{\partial I_{H}}$, $\frac{d\lambda_{3}}{dt} = -\frac{\partial Z}{\partial R_{H}}$, $\frac{d\lambda_{4}}{dt} = -\frac{\partial Z}{\partial S_{V}}$ and $\frac{d\lambda_{5}}{dt} = -\frac{\partial Z}{\partial I_{V}}$, here $Z$ is defined in~(\ref{EQ:Hamiltonian-fractional-dengue-control}) with the transversal conditions  $\lambda_{i}(T_{max}) =0,i=1,2,3,4,5$.
\end{proof}

\section{Estimation procedure}~\label{estimation}
We calibrated the fractional-order model~\eqref{EQ:Fractional-order-vector-borne-disease-model} with the weekly collected dengue data in San Juan, Puerto Rico, between April 9, 2010, and April 2, 2011~\cite{sardar2017mathematical}. During the model calibration process, the unknown parameters to be estimated for the model~\eqref{EQ:Fractional-order-vector-borne-disease-model} are
\begin{itemize}
	\item The fractional-order derivatives $\alpha$ and $ \beta$ (we assume $\beta=p$).\vspace{0.1cm}
	\item Transmission probability of susceptible human from infected vector $\beta_{V}^{H}$.\vspace{0.1cm}
	\item $\beta_{H}^{V}$ is transmission probability from infected human to susceptible vector .\vspace{0.1cm}
	\item Average number of mosquito bite $b$.\vspace{0.1cm}
	\item Ratio between total vector and human population $\delta$.\vspace{0.1cm}
	\item Mortality rate of mosquito $\mu_{V}$.\vspace{0.1cm}
\end{itemize}
Due to uncertainty of the initial state variable condition, we prefer to estimated them during model calibration. Furthermore, we take into account the entire human individual to be constant, togather with $R_{H}(t)=N_{H}-S_{H}(t)-I_{H}(t)$, hence, the compartment $R_{H}(t)$ becomes redundant.  
The new infected cases from the dengue model~\eqref{EQ:Fractional-order-vector-borne-disease-converted_model} at the $t_{j}$ week is expressed as:
\begin{eqnarray}~\label{EQ:Sumsquare}
	N(t_{j},\theta) = N(0) + \int_{0}^{t} \frac{b^{\alpha}}{N_{H}} \beta_{V}^{H} S_{H}(u) e^{-\mu_{V} t}~ _{RL}{D_{u}}^{1-\alpha}\left[I_{V}(u) e^{\mu_{V} u} \right] du
\end{eqnarray}
where, all the unknown model's parameters of~\eqref{EQ:Fractional-order-vector-borne-disease-model} are contained in $\theta$.

We construct the sum of squares function using~\eqref{EQ:Sumsquare}, defined in the following way:
\begin{eqnarray}~\label{EQ:sumsquare}
	SS(\theta)= \sum_{j=1}^{i}\left(N_{t_{j}}-N(t_{j}, \theta)\right)^{2}
\end{eqnarray}
where, $N_{t_{j}}$ is the experimental data at the $t_{j}$ week with complete set of data point $i$. Initially, the Latin-Hypercube-Sampling method applied to draw a sample of $100$ model unknown parameters $\theta$ within the specified range (see Table~\ref{Table:Model-parameters}). Additionally, implementing nonlinear least square on each sample $\theta$ we obtain $100$ estimated $\hat{\theta}$. The lsqcurveft in the optimization toolbox included with MATLAB (Mathworks, R2018a) was used to estimate the model~\eqref{EQ:Fractional-order-vector-borne-disease-model} parameter. We select the sample whose $SS(\theta)$, as defined in equation~\eqref{EQ:sumsquare}, is minimum among the $1000$ obtained samples of $\hat{\theta}$. Next, the MCMC sampling method is performed for $10,000$ samples, with this minimum $\hat{\theta}$ used as the initial guess. The Delayed Rejection Adaptive Metropolis-Hastings (DRAM) algorithm~\cite{haario2001adaptive,haario2006dram} technique is adopted to sample the unknown parameters of the model~\eqref{EQ:Fractional-order-vector-borne-disease-model}. In order to make sure the chain converged, Gewekes Z-scores were checked. Additionally, we have developed an implicit-$\theta$ scheme (see \textbf{Appendix A}) to solve the fractional order dengue model~\eqref{EQ:Fractional-order-vector-borne-disease-model}. In Table~\ref{Table:Model-parameters}, we have provided the parameter values that were utilized during the model simulation. Fig~\ref{Fig:Model_fitting} displays result. Evaluated parameter values, encompassing human and vector demographic parameters for model~\eqref{EQ:Fractional-order-vector-borne-disease-model} at San Juan province in Puerto Rico, are provided as  [Mean $(95\% CI)$] (see Table~\ref{Tab:estimated-parameters-Table}).

\begin{figure}[H]
	\begin{center}
		\includegraphics[width=1.0\textwidth]{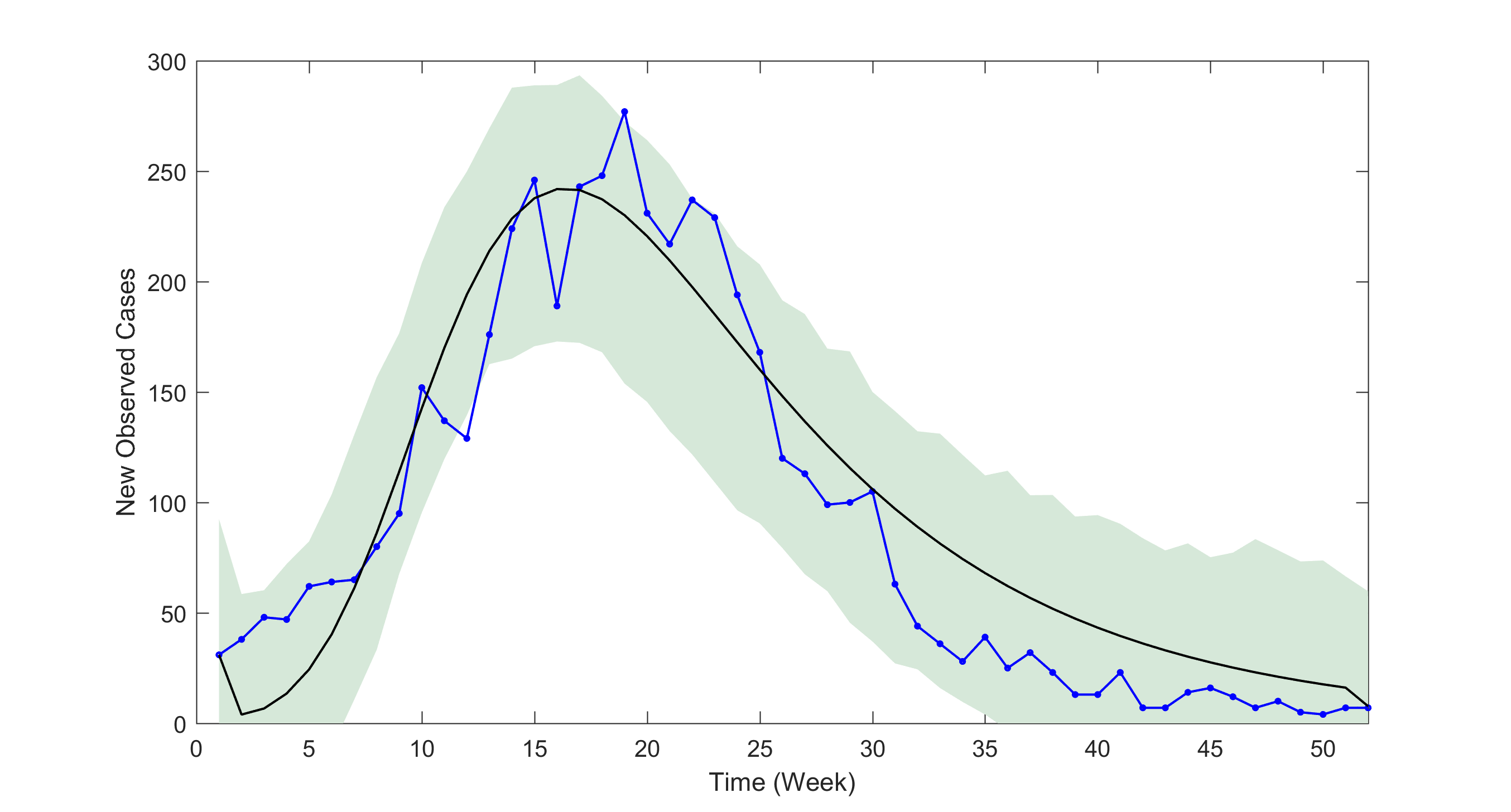}
	\end{center}
	\caption{A fractional-order dengue model~\eqref{EQ:Fractional-order-vector-borne-disease-model} was used to fit weekly recorded cases of dengue at San Juan province of Puerto Rico taken from  April 9, 2010, to  April 2 2011. The blue dotted line represents the weekly observed cases from the data, while the black line shows the new dengue cases obtained from model~\eqref{EQ:Fractional-order-vector-borne-disease-model} solution. Shaded region represents the $95\%$ confidence interval. To solve the fractional-order dengue model~\eqref{EQ:Fractional-order-vector-borne-disease-model} the $\theta-Method$ (see \textbf{Appendix A} was employed with a time step and $\theta$ are taken as $h= \frac{1}{5}$, and $\theta = 0$, respectively.}
	\label{Fig:Model_fitting}
\end{figure}

\section{Sensitivity testing}~\label{sensitivity}
In order to developed an effective control policy, it is necessary to establish a correlation between set of epidemiologically measurable parameters of the model~\eqref{EQ:Fractional-order-vector-borne-disease-model} with new dengue cases and the basic reproduction number $R_{0}$. The developed mathematical model includes several important parameters, as listed in Table~\ref{Table:Model-parameters}. Among these parameters, seven are epidemiologically important: $\alpha$ (the fractional-order derivatives on the disease transmission process), $\beta$ (the fractional-order derivatives in recovery process), $b$ (average number of mosquito bite), $\beta_{H}^{V}$ (probability of transmission from infected human to susceptible vector), $\beta_{V}^{H}$( transmission probability from infected vector to susceptible human), $\delta$( ratio between total vector and human population), and $\mu_{V}$( death rate of mosquito). We applied global sensitivity analysis~\cite{wu2013sensitivity} to measure how the seven model parameters influence total dengue cases and the basic reproduction ratio ($R_{0}$). The relationship between the seven mentioned parameters and the specified responses displays a non-linear and monotonic nature. Therefore, priority placed on computing the partial rank correlation coefficient (PRCC) for these seven parameters against the responses. Using Monte Carlo stratified simulations known as Latin hypercube sampling (LHS)~\cite{stein1987large}, $1000$ samples for  $\alpha$, $\beta$, $b$, $\beta_{H}^{V}$, $\beta_{V}^{H}$,  $\delta$, and $\mu_{V}$ have been drawn from their respective range (see Table~\ref{Tab:estimated-parameters-Table}). Hence, using PRCC~\cite{wu2013sensitivity}, we calculate and its p-value to establish an association between the mention parameters with total dengue cases and the basic reproduction number ($R_{0}$).

\begin{figure}[t]
	\begin{center}
		\includegraphics[width=1.0\textwidth]{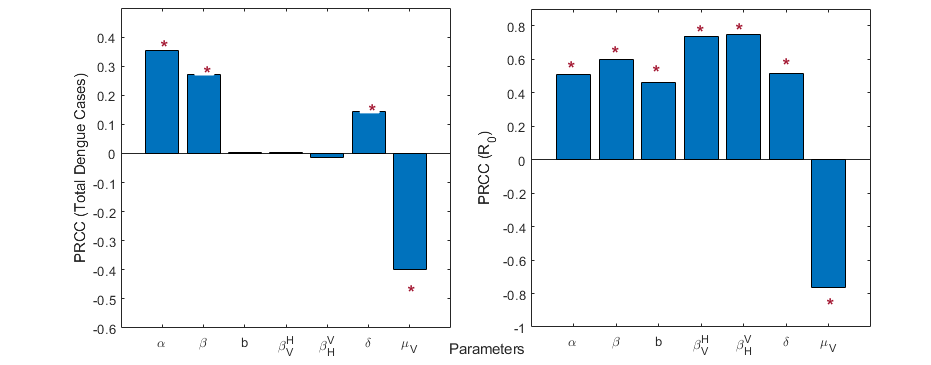}
	\end{center}
	\caption{Global sensitivity analysis of seven modelparameters~(\ref{EQ:Fractional-order-vector-borne-disease-model}) on the responses 1. total number of dengue cases and 2. basic reproduction number ($R_0$), respectively, in San Juan province of Puerto Rico. The total number of cases measured from $9^{th}$ April, $2010 $ to $2^{nd}$ May, $2011$. PRCC measure the impact of parameter uncertainty on the two previously mentioned responses. With Latin hypercube sampling technique ($LHS$), 1000 samples were taken for each parameter from the ranges in Table~\eqref{Table:Model-parameters}. The mean values of other parameter, shown in Table~\eqref{Tab:estimated-parameters-Table}, are fixed when computing the two aforementioned responses. $'*'$ represent the most influential parameters.}
	\label{Fig:Sensitivity}
\end{figure}

\section{Optimal Control strategies}~\label{Result}
We conduct numerical simulation to simulate the control problem~\eqref{EQ:Fractional-order-vector-borne-disease-model_with_control} using the parameter values of Tables~\ref{Tab:estimated-parameters-Table} with $\mu_{H}=2.4456E-04$, $N_{H}=2347833$, $C=1$ to establish an effective control policy. To perform optimal estimation we used forward-backward-sweep method~\cite{lenhart2007optimal} using a implicit-$\theta$ scheme given in \textbf{Appendix~A}.
To establish an effective control policy in San Juan province, Puerto Rico we consider different control structures to reduce dengue epidemic in this location. Different control combination are provided below:
\begin{enumerate}
	\item[$\bullet$] \textbf{Individual precaution only ($S_{1}$)}: In this scenario, only individual protection (using mosquito net, mosquito repellent, \textit{etc.}) is considered \textit{i.e.} $\psi \neq 0$ and $\zeta = \kappa = 0$. We found that applying this control may significantly reduce dengue cases in a location (see Table.~\ref{Tab:estimated-Control-total_cases-Table}).
	
	\item[$\bullet$] \textbf{Reducing aquatic transition only ($S_{2}$)}: In this control strategy, we considered reducing mosquito recruitment rate only (\textit{i.e.} killing mosquito egg, larvae, and pupae by spraying aquatic insecticides, and therefore, it reduces aquatic transition rate) \textit{i.e.} $\zeta \neq 0$, and $\psi = \kappa = 0$. We found that applying this control, dengue cases are reduced up to a certain time point but again, cases increased when the effect of $\zeta$ is reduced (Table.~\ref{Tab:estimated-Control-total_cases-Table}). 
	
	\item[$\bullet$] \textbf{Adult vector control only ($S_{3}$)}: In this control scenario, we considered adult vector control only (spraying adult insecticides) \textit{i.e.} $\kappa \neq 0$, and $\psi = \zeta = 0$. Applying this control, new dengue cases along with the total mosquito population reduced significantly (see Table~\ref{Tab:estimated-Control-total_cases-Table}). 
	
	\item[$\bullet$] \textbf{Individual precaution along with reduced aquatic transition ($S_{4}$)}: In this case, we considered a combination of individual protection and aquatic transition control \textit{i.e.} $\psi \neq 0$, $\zeta \neq 0$, and $\kappa = 0$. In this scenario, we found that optimal effect of individual protection dominates aquatic transition; as a result number of dengue cases reduce significantly but total mosquito population remain unchanged (see Table.~\ref{Tab:estimated-Control-total_cases-Table}). 
	
	\item[$\bullet$] \textbf{Individual precaution along with adult mosquito control ($S_{5}$)}: In this scenario, we considered combination of individual protection and adult vector control \textit{i.e.} $\psi \neq 0$, $\kappa \neq 0$, and $\zeta =0$. As similar to previous case, in this control scenario also optimal effect $\psi$ dominates $\kappa$. Therefore, the number of dengue cases reduce significantly, the total mosquito population remain unchanged (see Table.~\ref{Tab:estimated-Control-total_cases-Table}).
	
	\item[$\bullet$] \textbf{Adult mosquito control along with reduction in aquatic transition ($S_{6}$)}: In this scenario, we considered combination of adult vector control and reduction in mosquito recruitment rate \textit{i.e.} $\kappa \neq 0$, $\zeta \neq 0$, and $\psi = 0$. We found that in this optimal control scenario the incidence of total dengue cases and overall mosquito population are reduced significantly (see Table.~\ref{Tab:estimated-Control-total_cases-Table}). 
	
	\item[$\bullet$] \textbf{Combination of all control ($S_{7}$)}: In this scenario, we studied the optimal effect of all control ($\kappa \neq 0$, $\zeta \neq 0$, $\psi \neq 0$). In this case, optimal effect of individual protection dominates other two controls (adult vector control and reduction in mosquito recruitment rate). Therefore, optimal effect of all control reduce dengue cases significantly but total vector population remain unchanged (see Table.~\ref{Tab:estimated-Control-total_cases-Table}).  
\end{enumerate}
\begin{table}[H]
	\tabcolsep 4pt
	\centering
	\caption{\bf{Estimated optimal application rate and number of total dengue cases projected between 2009-2011 for the fractional-order dengue transmission model~(\ref{EQ:Fractional-order-vector-borne-disease-model}). Estimated parameter values are provided in the subsequent format [\textbf{Mean (95\% CI)}].}}
	\begin{tabular}{ p{6cm} p{6cm} p{4cm}}
		\hline\\ \footnotesize{{Control policy}} &
		\footnotesize{{Mean weekly optimal application rate}} & \footnotesize{{Total cases}}  \\
		\hline\\
		\footnotesize{$\psi(t)= \eta(t)=\kappa(t)=0$} & \footnotesize{$--$}  &
		\footnotesize{$\substack{ 5089 \\\\(4694 - 5559)}$} \\\\
		\footnotesize{$\psi(t)\neq 0, \eta(t)=\kappa(t)=0$} & \footnotesize{$\substack{0.3671\\\\ (0.3424 - 0.3850)}$}  &
		\footnotesize{$\substack{50 \\\\ (47 - 53)}$} \\\\
		\footnotesize{$\psi(t)= \kappa(t)=0, \eta(t)\neq0$} & \footnotesize{$\substack{0.0191\\\\ (5.96E-20 - 0.1157)}$} &
		\footnotesize{$\substack{47  \\\\(46 - 48)}$} \\\\
		\footnotesize{$\psi(t)=\eta(t)=0, \kappa(t)\neq0$} & \footnotesize{$\substack{ 0.2395\\\\ (0.0962 - 0.3076)}$} &
		\footnotesize{$\substack{46  \\\\(46 - 47)}$} \\\\
		\footnotesize{$\psi(t)\neq 0, \eta(t)\neq0, \kappa(t)=0$} & \footnotesize{$\substack{0.3681 (0.3530 - 0.3828)\\\\ 0.0230 (7.08E-20-0.1154 )}$} &
		\footnotesize{$\substack{50  \\\\ (47 - 54)}$}  \\\\
		\footnotesize{$\psi(t)=0, \eta(t)\neq0, \kappa(t)\neq 0$} & \footnotesize{$\substack{0.0750 (5.43E-20 -  0.2498)\\\\ 0.2110 ( 2.44E-18-0.3077 )}$} &
		\footnotesize{$\substack{47  \\\\ (46 - 54)}$}  \\\\
		\footnotesize{$\psi(t)\neq 0, \eta(t)=0, \kappa(t)\neq0$} & \footnotesize{$\substack{0.3532 (0.2020 - 0.3872)\\\\ 0.2526 (0.1336-0.3267 )}$} &
		\footnotesize{$\substack{47  \\\\ (46 - 49)}$}  \\\\
		\footnotesize{$\psi(t)\neq 0, \eta(t)\neq0, \kappa(t)\neq 0$} & \footnotesize{$\substack{0.1481 (5.10E-19 - 0.2883)\\\\ 0.3311 (0.1442-0.3920 )\\\\0.2545(0.1154 -0.3459)}$} &
		\footnotesize{$\substack{47  \\\\ (46 - 50)}$}  \\\\
		[0.2ex]
		\hline\\
	\end{tabular}
	\label{Tab:estimated-Control-total_cases-Table}
\end{table}

\begin{table}[H]
	\tabcolsep 4 pt
	\centering
	\caption{\bf{The estimate parameters of the fractional-order dengue transmission model~(\ref{EQ:Fractional-order-vector-borne-disease-model}). Estimated parameter values are in the format [\textbf{Mean (95\% CI)}].}}
	\begin{tabular}{ p{6cm} p{6cm} }
		\hline\\
		\footnotesize{{Parameter}}  & \footnotesize{{Estimated values }}  \\
		\hline\\
		\footnotesize{$\alpha$}   &
		\footnotesize{$\substack{0.2352 \\\\ (0.1472 - 0.2858 )}$} \\\\
		\footnotesize{$\beta$} &	\footnotesize{$\substack{0.9918 \\\\ (0.9159 - 0.9993 )}$} \\\\
		\footnotesize{$b$} &
		\footnotesize{$\substack{3.7578\\\\ (2.8098  - 4.1421 )}$ } \\\\
		\footnotesize{$\beta_{V}^{H}$} & \footnotesize{$\substack{0.0135 \\\\ (0.0115 - 0.0190)}$}  \\\\
		\footnotesize{$\beta_{H}^{V}$} &
		\footnotesize{$ \substack{0.9405 \\\\ (0.8243 - 0.9935)}$ }\\\\
		\footnotesize{$\delta$} & \footnotesize{$\substack{1.0470 \\\\ (1.0045 - 1.6046)}$}\\\\
		\footnotesize{$\mu_{V}$} & \footnotesize{$\substack{0.1428 \\\\ (0.1401 - 0.1586)}$}\\\\
		[0.2ex]
		\hline\\
	\end{tabular}
	\label{Tab:estimated-parameters-Table}
\end{table}
\vspace{5cm}

\section{Results and Discussion}~\label{conclusion}
Recent studies suggest that certain epidemic transmission and recovery processes may exhibit complex characteristics, deviating from the traditional Markovian distribution~\cite{angstmann2016fractional,sardar2017mathematical}. This implies that the dynamics of these processes may be influenced by non-Markovian factors, which could have significant implications for our understanding and management of epidemics~\cite{sardar2015mathematical,starnini2017equivalence,sardar2017mathematical, lin2020non, sherborne2018mean}. To study more on this fact, we developed a new single-serotype model~\eqref{EQ:Fractional-order-vector-borne-disease-model} that incorporates transmission and recovery processes governed by power-law distribution. We showed that these power-law distributions in transmission and recovery processes lead to a fractional-order dengue system~\eqref{EQ:Fractional-order-vector-borne-disease-model}. In this new dengue model~\eqref{EQ:Fractional-order-vector-borne-disease-model}, fractional derivative appeared as tempered fractional derivative~\cite{sabzikar2015tempered}. Mathematically, we derived an approximation (see Theorem~\ref{Theorem-1}) related to the tempered fractional derivative $\bigg( e^{-\beta t}\ _{RL}D_{t}^{\alpha}\left[e^{\beta t} F(t)\right] \approx \beta^{\alpha} F(t) \bigg)$ that appeared on the right-hand side of the system~\eqref{EQ:Fractional-order-vector-borne-disease-model}. We provide a mathematical proof regarding the positive invariance and the boundedness (see Proposition~\ref{prop-2} and Proposition~\ref{prop-3}) for every forward solution connected to the fractional-order dengue system~\eqref{EQ:Fractional-order-vector-borne-disease-model}. The basic reproduction number ($R_{0}$) for the fractional-order system~\eqref{EQ:Fractional-order-vector-borne-disease-model} was derived, which is dependent on both non-Markovian transmission and recovery process parameters ($\alpha$, $\beta$, and $p$). Moreover, we found that the fractional order dengue system~\eqref{EQ:Fractional-order-vector-borne-disease-model} possesses a unique disease free equilibrium point ($\Psi_{0}$) and using the approximation provided in Theorem~\ref{Theorem-1}, we studied the local and global stability of $\Psi_{0}$ from the perspective of the basic reproduction ratio $R_{0}$ (see Proposition~\ref{prop-4} and Proposition~\ref{disease_free_global_prop-4}). Furthermore, we found that the fractional order dengue system~\eqref{EQ:Fractional-order-vector-borne-disease-model} shows a unique endemic steady-state unique point ($\Psi_{*}$) under the condition $R_{0} > 1$. Using the Poincar\'e-Bendixson theorem along with a geometric framework~\cite{li2002global}, we validate the global stability of $\Psi_{*}$ in terms of $R_{0}$.

To explore the outcome of different interventions, we updated the equation~\eqref{EQ:Fractional-order-vector-borne-disease-model} into a fractional-order dengue model with control~\eqref{EQ:Fractional-order-vector-borne-disease-model_with_control}. Furthermore, we constructed an optimal control problem [see equation~\eqref{EQ:Cost-functional-optimal-control}] related to the fractional-order system~\eqref{EQ:Fractional-order-vector-borne-disease-model_with_control} to study the optimal effect of different dengue interventions to reduce infected individual in a location. Furthermore, we state and prove (see Theorem~\ref{Theorem-optimal-control}) the existence of a solution for optimal control problem related to the fractional order dengue model~\eqref{EQ:Fractional-order-vector-borne-disease-model_with_control}. Additionally, we simultaneously solve the dengue state system~\eqref{EQ:Fractional-order-vector-borne-disease-model_with_control} along with the co-state equation~\eqref{EQ:Costate-Fractional-Dengue-model} applying forward-backward-sweep computational technique~\cite{lenhart2007optimal}. Numerical discretization of the fractional derivatives appeared in both state system~\eqref{EQ:Fractional-order-vector-borne-disease-model_with_control} and co-state system~\eqref{EQ:Costate-Fractional-Dengue-model} are done using a implicit-$\theta$ scheme provided in \textbf{Appendix~A}. The parameters value used to obtaining the numerical solution for the control problem~\eqref{EQ:Cost-functional-optimal-control} are extracted from Table~\ref{Table:Model-parameters} and Table~\ref{Tab:estimated-parameters-Table}.

Since the analytical solution of equation~\eqref{EQ:Fractional-order-vector-borne-disease-model} is almost impossible to be determined, hence we developed a numerical scheme [see \textbf{Appendix~A} in~\eqref{sec:numerical-scheme}] to study the dynamics of the dengue system~\eqref{EQ:Fractional-order-vector-borne-disease-model}. Furthermore, using this scheme, we also determined the solution for the newly constructed optimal control problem [see Section~\ref{Optimal Control}] related to the fractional-order control model~\eqref{EQ:Fractional-order-vector-borne-disease-model_with_control}.

To establish our objective, we initially estimate several crucial parameters associated with the dengue system~\eqref{EQ:Fractional-order-vector-borne-disease-model}. We aim to validate the fractional order system~\eqref{EQ:Fractional-order-vector-borne-disease-model} using the data collected from San Juan area in Puerto Rico, between April 9, 2010, to April 2, 2011, to better understanding the dynamics involved. The model, as depicted in Figure~\ref{Fig:Model_fitting}, suggests that our model effectively captured the data, indicating a successful fit to the aforementioned data. Table~\ref{Tab:Control-weight-Table} provides estimated parameters for the fractional order system~\eqref{EQ:Fractional-order-vector-borne-disease-model}. The power-law coefficient ($\alpha$), representing transmission memory~\cite{sardar2015mathematical} in the model~\eqref{EQ:Fractional-order-vector-borne-disease-model} were observed to be less than unity (see Table~\ref{Tab:estimated-parameters-Table}). Based on the results, the memory effect in disease dynamics ($\alpha$: transmission memory) has significant impact on the transmission process in the particular region. This reflects a significant shift in the dynamics of transmission process, indicating a heightened role played by memory in disease process ($\alpha$) in this specific context. In addition, the data presented in Table~\eqref{Tab:estimated-parameters-Table}  illustrates that a increasing in memory during the recovery process has the potential to regulate the disease process. This clearly highlights the importance of performing a sensitivity analysis to investigate the correlation between a specific parameter and two key outcomes: total number of dengue cases, and basic reproduction ratio $(R_{0})$.

To more accurately asses the impact of parameters on dynamics of the model, global sensitivity analysis performed on the basic reproduction index $R_{0}$ and the total number of dengue cases, respectively. Thus, it follows that the parameters that impact the total dengue cases are the fractional derivative, which is known as memory effect in transmission of infection process $\alpha$, $\beta$ the memory impact in recovery, $\delta$ the ratio between total vector and human population and $\mu_{V}$ mosquito date rate~(see Figure~\ref{Fig:Sensitivity}). Factors influencing basic reproduction number $R_{0}$ are average number of mosquito bites $b$, rate of transmission from infected human to susceptible vector $\beta_{H}^{V}$, rate of transmission of susceptible human from infected vector $\beta_{V}^{H}$, followed by $\alpha$, $\beta$, $\delta$, and $\mu_{V}$~(see Figure~\ref{Fig:Sensitivity}). As biologically, $\alpha$ measures some memory effect in disease transmission~\cite{du2013measuring, saeedian2017memory, sardar2017mathematical} and therefore, increasing memory impact ($\alpha \to 0$ ) in transmission process leads to longer epidemic duration and fewer dengue incidences in a region. This result is in agreement with some previous studies on fractional-order transmission models~\cite{sardar2015mathematical,sardar2017mathematical}. The power-law recovery process parameter $\beta$ also has a positive correlation to the $R_{0}$ (see Figure~\ref{Fig:Sensitivity}). However, it may significantly change the cumulative dengue cases in a location (see Figure~\ref{Fig:Sensitivity}). Biologically, $\beta$ may represent a measure of recovery rate, and thus $\beta \to 0$ leads to faster recovery and subsequently, produce lesser number of dengue cases in a location (see Figure~\ref{Fig:Sensitivity}). This observation suggests that a decline in memory during the recovery process ($\beta to 1$) correlates with an increase in both the reproduction number and the total cases. In addition, Fig.~\ref{Fig:Sensitivity} indicates that raising the mortality rate of mosquitoe can reduce the basic reproduction number $R_{0}$ and consequently the disease burden.

Our study brings up an intriguing discussion about potential control strategies for managing the new dengue case and threshold quantity $R_{0}$. This exploration presents an opportunity to comprehensively assess and contemplate the various strategies available. According to Table~\ref{Tab:estimated-Control-total_cases-Table}, the enforcement of different control strategies leads a noticeable change in the number of total dengue cases. Existing intervention strategies to control dengue transmission encompass mosquito control through the use of insecticides, individual precautionary measures, and heightened human awareness regarding the utilization of mosquito nets or alternative repellents~\cite{sardar2015mathematical,pinheiro1997global}. Increasing memory in disease process ($\alpha \to 0$) in transmission process in effective way (using mosquito net, mosquito repellent, etc.) to reduce biting rate $b$ which decrease the basic reproduction number (see Fig~\ref{Fig:Sensitivity}). Out of the seven intervention setting, utilizing all three control strategies, including non-pharmaceutical intervention, behavioral intervention, and environmental intervention, either individually or in combination, has been demonstrated to naturally decrease the number of maximum cases compared to scenario without any intervention. This comprehensive approach has shown promising results in mitigating the disease spread and minimizing the overall burden on public health. Also, comparing the optimal effect of three dengue interventions and all their possible combinations suggests that adult mosquito control may provide the best result in terms of dengue case reduction (see Table~\ref{Tab:estimated-Control-total_cases-Table}). Thus, policymakers may focus on fractional-order transmission and recovery parameters ($\alpha$, $\beta$) along with effective vector control (adult) to decrease the incidence of dengue transmission in a region.

Lastly, a few shortcomings of the current study can be noted, and they might be expanded upon in the future. There is evidence that asymptomatic and exposed cases of vector-borne diseases can transmit the disease to others~\cite{duong2015asymptomatic, gumel2012causes, eshita2007vector, wilder2004seroepidemiology}. Incorporating these disease stages may lead to more complex results. We left this for future projects.

\bibliographystyle{ieeetr}
\biboptions{square}
\bibliography{Manuscript_bib}

\section{Appendix $A$. Numerical-Scheme}~\label{sec:numerical-scheme}
\subsection{An implicit-$\theta$ method}
To develop a numerical algorithm for the initial value problem~(\ref{EQ:Fractional-order-vector-borne-disease-model}), we used a one-step Euler method. We begin with the rectangle scheme to discretize the RL-fractional derivatives that appeared in the dengue model~(\ref{EQ:Fractional-order-vector-borne-disease-model}). 
\subsection{Rectangle method:} Let us assume that $[0, t_{max}]$ be the interval for which the problems~(\ref{EQ:Fractional-order-vector-borne-disease-model}) are well defined. We subdivide this interval into $(T+1)$ uniform grid by $t_{i} = i h, i= 0, 1,....T$, where, $h > 0$, and $T = \left \lceil \frac{t_{max}}{h} \right \rceil $ is a positive integer. Furthermore, we assume that $S_{H}^{i} =S_{H}(t_{i}) ,I_{H}^{i} =I_{H}(t_{i}), R_{H}^{i} =R_{H}(t_{i}), S_{V}^{i} =S_{V}(t_{i}), I_{V}^{i} =I_{V}(t_{i}),$ for $i=0,1....T$ are the approximation of $S_{H}$, $I_{H}$ and $I_{V} $ respectively at the point $t_{i}$.
We approximate the system~(\ref{EQ:Fractional-order-vector-borne-disease-model}) as:

\begin{eqnarray}
	\begin{array}{llll}
		\displaystyle	\frac{S_{H}^{i+1}-S_{H}^{i}}{h}&=& 	\displaystyle \mu_{H}N_{H}- \frac{b^{\alpha}}{N_{H}} \beta_{V}^{H} S_{H}^{i+1} e^{-\mu_{V} (i+1)h}~ _{RL}{D_{t}}^{1-\alpha}\left[I_{V}(t) e^{\mu_{V} t} \right]\Big|_{t=t_{i}}-\mu_{H}  S_{H}^{i+1}\\
		\displaystyle	\frac{I_{H}^{i+1}-I_{H}^{i+}}{h}& =& 	\displaystyle \frac{b^{\alpha}}{N_{H}} \beta_{V}^{H} S_{H}^{i+1} e^{-\mu_{V} (i+1)h}~ _{RL}{D_{t}}^{1-\alpha}\left[I_{V}(t) e^{\mu_{V} t} \right]\Big|_{t=t_{i}}\\ &-& 	\displaystyle \frac{e^{-\mu_{H} (i+1)h}}{C^{\beta}}~_{RL}D_{t}^{1-\beta}\left[I_{H}(t) e^{\mu_{H} t} \right]\Big|_{t=t_{i}} -\mu_{H}  I_{H}^{i+1},\\
		\displaystyle	\frac{R_{H}^{i+1}-R_{H}^{i}}{h} &=& 	\displaystyle\frac{e^{-\mu_{H} (i+1)h}}{C^{\beta}}~_{RL}D_{t}^{1-\beta}\left[I_{H}(t) e^{\mu_{H} t} \right]\Big|_{t=t_{i}} -\mu_{H}  R_{H}^{i+1},\\
		\displaystyle	\frac{S_{V}^{i+1}-S_{V}^{i}}{h}&=& 	\displaystyle \Pi_{V}-\frac{b^{p}}{N_{H} C^{p}} \beta_{H}^{V} S_{V}^{i+1} e^{-\mu_{H} (i+1)h} ~_{RL}D_{t}^{1-p}\left[I_{H}(t) e^{\mu_{H} t}\right]\Big|_{t=t_{i}} -\mu_{V}  S_{V}^{i+1},\\
		\displaystyle	\frac{I_{V}^{i+1}-I_{V}^{i}}{h}&=& 	\displaystyle \frac{b^{p}}{N_{H} C^{p}} \beta_{H}^{V} S_{V}^{i+1} e^{-\mu_{H} (i+1)h} ~_{RL}D_{t}^{1-p}\left[I_{H}(t) e^{\mu_{H}t}\right]\Big|_{t=t_{i}} -\mu_{V}  I_{V}^{i+1}.
	\end{array}
	\label{EQ:Rectangle-method-0}
\end{eqnarray}

Now, approximate fractional derivatives of the system~\eqref{EQ:Fractional-order-vector-borne-disease-model} in right-hand side. From definition~\eqref{Definition-1}, we have:
\begin{eqnarray}
	\begin{array}{llll}
		\displaystyle	_{RL}{D_{t}}^{1-\alpha}\Big[I_{V}(t) e^{\mu_{V} t} \Big]& = & 	\displaystyle \frac{1}{\Gamma(\alpha)} \frac{d}{dt} \int_{0}^{t} (t-s)^{\alpha -1} e^{\mu_{V} s} I_{V}(s) ds,\\\\&=& 	\displaystyle \frac{1}{\Gamma(\alpha)} \frac{dH(t)}{dt}, 
	\end{array}
	\label{EQ:Rectangle-method-1}
\end{eqnarray}
where, $\displaystyle H(t) = \int_{0}^{t} (t-s)^{\alpha -1} e^{\mu_{V} s} I_{V}(s) ds$.

Hence, using the vale $H(t)$ in~(\ref{EQ:Rectangle-method-1}) produced: 
\begin{eqnarray}
	\begin{array}{llll}
		\displaystyle	_{RL}{D_{t}}^{1-\alpha}\left[I_{V}(t) e^{\mu_{V} t} \right]\Big|_{t=t_{i}}& = & \displaystyle \frac{1}{\Gamma(\alpha)} \left[ \frac{H(t_{i+1})-H(t_{i})}{h}\right]\\\\& = &  \displaystyle \frac{e^{\mu_{V} (i+1)h}}{h}\left[  \sum_{j=0}^{i} \omega_{j,i}^{I_{V},R} I_{V}(t_{j+1}) + e^{-\mu_{V}h}\sum_{j=0}^{i}\tilde{ \omega}_{j,i}^{I_{V},R} I_{V}(t_{j+1})\right] 
	\end{array}
	\hspace{0.6cm}\label{EQ:Rectangle-method-2}
\end{eqnarray}

where,

$\omega_{j,i}^{I_{V},R}=\frac{1}{\mu_{V}^{\alpha} \Gamma(\alpha)} \left\{\begin{array}{ll}
	\gamma(\mu_{V}h,\alpha), & \mathrm{if}\text{ }(j=i) \\
	\left[ \gamma(\mu_{V}(i+1-j)h,\alpha)-\gamma(\mu_{V}(i-j)h,\alpha)\right]  & \mathrm{if}\text{ }(j=0,1...(i-1)) \\
\end{array}\right.\\ $
$\tilde{ \omega}_{j,i}^{I_{V},R}=\frac{1}{\mu_{V}^{\alpha} \Gamma(\alpha)}  \left\{\begin{array}{ll}
	0, & \mathrm{if}\text{ }(j=i) \\
	\left[ \gamma(\mu_{V}(i-1-j)h,\alpha)-\gamma(\mu_{V}(i-j)h,\alpha)\right]  & \mathrm{if}\text{ }(j=0,1...(i-1)) .\\
\end{array}\right. \\$

\subsection{Trapezoidal method:}
We use the piece-wise linear trapezoidal quadrature interpolation formula for the function given below:

$S_{H}(t) = \frac{t-t_{i+1}}{t_{i}-t_{i+1}} S_{H}(t_{i}) +\frac{t-t_{i}}{t_{i+1}-t_{i}} S_{H}(t_{i+1})$,\\
$I_{H}(t) = \frac{t-t_{i+1}}{t_{i}-t_{i+1}} I_{H}(t_{i}) +\frac{t-t_{i}}{t_{i+1}-t_{i}} I_{H}(t_{i+1})$,\\
$R_{H}(t) = \frac{t-t_{i+1}}{t_{i}-t_{i+1}} R_{H}(t_{i}) +\frac{t-t_{i}}{t_{i+1}-t_{i}} R_{H}(t_{i+1})$,\\
$S_{V}(t) = \frac{t-t_{i+1}}{t_{i}-t_{i+1}} S_{V}(t_{i}) +\frac{t-t_{i}}{t_{i+1}-t_{i}} S_{V}(t_{i+1})$,\\
$I_{V}(t) = \frac{t-t_{i+1}}{t_{i}-t_{i+1}} I_{V}(t_{i}) +\frac{t-t_{i}}{t_{i+1}-t_{i}} I_{V}(t_{i+1}),$\\
for $i=0,1....T$ are the approximation of $S_{H}, I_{H}$, and $I_{V}$ respectively at the point $t_{i}$ and $t_{i+1} $. 
Then approximate the term $_{RL}{D_{t}}^{1-\alpha}\left[
I_{V}(t) e^{\mu_{V} t} \right]$ at $t=t_{i}$ as follows:

\begin{eqnarray}
	\begin{array}{llll}
		\displaystyle	_{RL}{D_{t}}^{1-\alpha}\left[
		I_{V}(t) e^{\mu_{V} t} \right]\Big|_{t=t_{i}} &=& 	\displaystyle \frac{1}{\Gamma(\alpha)} \left[ \frac{H(t_{i+1})-H(t_{i})}{h}\right]\\ &=&	\displaystyle \frac{e^{\mu_{V} (i+1)h}}{h^{2}}\left[  \sum_{j=0}^{i} \left( \omega_{j,i}^{I_{V},T} -e^{-\mu_{V}h} U_{j,i}^{I_{V},T}  \right) I_{V}(t_{j})\right. \\ &&	\displaystyle\left. \quad + \sum_{j=0}^{i}\left( \tilde{\omega}_{j,i}^{I_{V},T} -e^{-\mu_{V}h} \tilde{U}_{j,i}^{I_{V},T}  \right) I_{V}(t_{j+1})\right],
	\end{array}
	\hspace{0.6cm}\label{EQ:Trapizoidal-method-1}
\end{eqnarray}

where,

$\omega_{j,i}^{I_{V},T}=\frac{1}{\mu_{V}^{\alpha+1} \Gamma(\alpha)}  \left\{\begin{array}{ll}
	\gamma(\mu_{V}h,\alpha+1), & \mathrm{if}\text{ }(j=i) \\
	\left[ \gamma(\mu_{V}(i+1-j)h,\alpha+1)-\gamma(\mu_{V}(i-j)h,\alpha+1)\right]\\ 
	-\mu_{V} (i-j)h\left[ \gamma(\mu_{V}(i+1-j)h,\alpha)-\gamma(\mu_{V}(i-j)h,\alpha)\right]  & \mathrm{if}\text{ }(j=0,1...(i-1)) \\
\end{array}\right.\\ $
$U_{j,i}^{I_{V},T}=\frac{1}{\mu_{V}^{\alpha+1} \Gamma(\alpha)}  \left\{\begin{array}{ll}
	0, & \mathrm{if}\text{ }(j=i) \\
	\left[ \gamma(\mu_{V}(i-j)h,\alpha+1)-\gamma(\mu_{V}(i-j-1)h,\alpha+1)\right]\\ 
	-\mu_{V} (i-j-1)h\left[ \gamma(\mu_{V}(i-j)h,\alpha)-\gamma(\mu_{V}(i-j-1)h,\alpha)\right]  & \mathrm{if}\text{ }(j=0,1...(i-1)) \\
\end{array}\right.\\ $
$\tilde{ \omega}_{j,i}^{I_{V}T}=\frac{1}{\mu_{V}^{\alpha+1} \Gamma(\alpha)}  \left\{\begin{array}{ll}
	\mu_{V}h\gamma(\mu_{V}h,\alpha)-\gamma(\mu_{V}h,\alpha+1), & \mathrm{if}\text{ }(j=i) \\
	\mu_{V} (i+1-j)h\left[ \gamma(\mu_{V}(i+1-j)h,\alpha)-\gamma(\mu_{V}(i-j)h,\alpha)\right]\\ 
	-\left[ \gamma(\mu_{V}(i+1-j)h,\alpha+1)-\gamma(\mu_{V}(i-j)h,\alpha+1)\right]  & \mathrm{if}\text{ }(j=0,1...(i-1))\\
\end{array}\right. \\$
$\tilde{ U}_{j,i}^{I_{V}T}=\frac{1}{\mu_{V}^{\alpha+1} \Gamma(\alpha)}  \left\{\begin{array}{ll}
	0, & \mathrm{if}\text{ }(j=i) \\
	\mu_{V} (i-j)h\left[ \gamma(\mu_{V}(i-j)h,\alpha)-\gamma(\mu_{V}(i-j-1)h,\alpha)\right]\\ 
	-\left[ \gamma(\mu_{V}(i-j)h,\alpha+1)-\gamma(\mu_{V}(i-j-1)h,\alpha+1)\right]  & \mathrm{if}\text{ }(j=0,1...(i-1)) \\
\end{array}\right. \\$
\subsection{Implicit $\theta$-Method} Let the integral  $_{RL}{D_{t}}^{1-\alpha}\left[
y(t) e^{\mu_{m} t} \right]$ at $t=t_{i}$ can be discritize using $ \theta-method $, which is defined as follows:
\begin{eqnarray}~\label{EQ:Theta-method}
	\begin{array}{llll}
		\displaystyle	_{RL}{D_{t}}^{1-\alpha}\left[
		y(t) e^{\mu_{m} t} \right]\Big|_{t=t_{i}}  &=& \displaystyle \frac{e^{\mu_{m} (i+1)h}}{h}\left\lbrace  \theta \left[  \sum_{j=0}^{i} \omega_{j,i}^{y,R} y(t_{j+1}) + e^{-\mu_{m}h}\sum_{j=0}^{i}\tilde{ \omega}_{j,i}^{y,R} y(t_{j+1})\right]\right. \\& & \displaystyle\left. \quad+  \frac{(1-\theta)}{h}\left[  \sum_{j=0}^{i} \left( \omega_{j,i}^{y,T} -e^{-\mu_{m}h} U_{j,i}^{y,T}  \right) y(t_{j})\right.\right. \\& & \displaystyle\left.\left. \quad + \sum_{j=0}^{i}\left( \tilde{\omega}_{j,i}^{y,T} -e^{-\mu_{m}h} \tilde{U}_{j,i}^{y,T}  \right) y(t_{j+1})\right]\right\rbrace,
	\end{array}
\end{eqnarray}
where, we consider the $\theta-method$  as the combination of Rectangle and Trapezoidal method defined above~(\ref{EQ:Rectangle-method-2}) and~(\ref{EQ:Trapizoidal-method-1}). Then the differential equation
$\frac{dy}{dt} =   _{RL}{D_{t}}^{1-\alpha}\left[
y(t) e^{\mu_{m} t} \right]$, can be discritize using Euler method as follows

\begin{eqnarray}~\label{EQ:Theta-method-1}
	\begin{array}{llll}
		\displaystyle	y_{i+1} &=& \displaystyle y_{i}+\frac{e^{\mu_{m} (i+1)h}}{h}\left\lbrace  \theta \left[  \sum_{j=0}^{i} \omega_{j,i}^{y,R} y(t_{j+1}) + e^{-\mu_{m}h}\sum_{j=0}^{i}\tilde{ \omega}_{j,i}^{y,R} y(t_{j+1})\right]\right. \\ && \displaystyle \left. \quad+  \frac{(1-\theta)}{h}\left[  \sum_{j=0}^{i} \left( \omega_{j,i}^{y,T} -e^{-\mu_{m}h} U_{j,i}^{y,T}  \right) y(t_{j})\right.\right. \\& & \displaystyle \left.\left. \quad + \sum_{j=0}^{i}\left( \tilde{\omega}_{j,i}^{y,T} -e^{-\mu_{m}h} \tilde{U}_{j,i}^{y,T}  \right) y(t_{j+1})\right]\right\rbrace,
	\end{array}
\end{eqnarray}

Now, $ _{RL}{D_{t}}^{1-\alpha}\left[
I_{V}(t) e^{\mu_{V} t}\right] $ descritize using~\eqref{EQ:Theta-method} as:
\begin{eqnarray}
	\begin{array}{llll}
		\displaystyle	_{RL}{D_{t}}^{1-\alpha}\left[
		I_{V}(t) e^{\mu_{V} t} \right]\Big|_{t=t_{i}}  &=& \displaystyle \frac{e^{\mu_{V} (i+1)h}}{h}\left\lbrace  \theta \left[  \sum_{j=0}^{i} \omega_{j,i}^{I_{V},R} I_{V}(t_{j+1}) + e^{-\mu_{V}h}\sum_{j=0}^{i}\tilde{ \omega}_{j,i}^{I_{V},R} I_{V}(t_{j+1})\right]\right. \\&& \displaystyle \left. \quad+  \frac{(1-\theta)}{h}\left[  \sum_{j=0}^{i} \left( \omega_{j,i}^{I_{V},T} -e^{-\mu_{V}h} U_{j,i}^{I_{V},T}  \right) I_{V}(t_{j})\right.\right. \\ && \displaystyle\left.\left. \quad + \sum_{j=0}^{i}\left( \tilde{\omega}_{j,i}^{I_{V},T} -e^{-\mu_{V}h} \tilde{U}_{j,i}^{I_{V},T}  \right) I_{V}(t_{j+1})\right]\right\rbrace. \\
	\end{array}
\end{eqnarray}
Then, use~\eqref{EQ:Theta-method-1} and~\eqref{EQ:Theta-method-2} the equation~\eqref{EQ:Rectangle-method-0} can be transformed as system of equation:
\begin{eqnarray}
	\begin{array}{llll}
		\displaystyle A_{S_{H}}S_{H}^{i+1,\theta}+B_{S_{H}}S_{H}^{i+1,\theta}I_{V}^{i+1,\theta}+C_{S_{H}}&=0,\\\\
		\displaystyle A_{I_{H}}S_{H}^{i+1,\theta}+B_{I_{H}}I_{H}^{i+1,\theta}+C_{I_{H}}S_{H}^{i+1,\theta}I_{V}^{i+1,\theta}+D_{I_{H}}&=0,\\\\
		\displaystyle	A_{R_{H}}R_{H}^{i+1,\theta}+B_{R_{H}}I_{H}^{i+1,\theta}+C_{R_{H}}&=0,\\\\
		\displaystyle A_{S_{V}}S_{V}^{i+1,\theta}+B_{S_{V}}S_{V}^{i+1,\theta}I_{H}^{i+1,\theta}+C_{S_{V}}&=0,\\\\
		\displaystyle A_{I_{V}}S_{V}^{i+1,\theta}+B_{I_{V}}S_{V}^{i+1,\theta}I_{H}^{i+1,\theta}+C_{I_{V}}I_{V}^{i+1,\theta}+D_{I_{V}}&=0.
	\end{array}
	\label{EQ:Theta-method-2}
\end{eqnarray}
where,
\begin{eqnarray}
	\begin{array}{llll}
		\displaystyle A_{S_{H}} &=& 1+\mu_{H} h\\\\&
		+& 	\displaystyle \frac{b^{\alpha}}{N_{H}} \beta_{V}^{H} \left[ \sum_{j=0}^{i-1} \left( \theta \left\lbrace\omega_{j,i}^{I_{V},R}  + e^{- \mu_{V}h}\tilde{ \omega}_{j,i}^{I_{V},R} \right\rbrace+\frac{(1-\theta)}{h} \sum_{j=0}^{i-1}\left( \tilde{\omega}_{j,i}^{I_{V},T} -e^{-\mu_{V}h} \tilde{U}_{j,i}^{I_{V},T} \right)\right)  I_{V}(t_{j+1}) \right.\\\\&& 	\displaystyle \left. \quad+\frac{(1-\theta)}{h}\sum_{j=0}^{i} \left( \omega_{j,i}^{I_{V},T} -e^{-\mu_{V} h} U_{j,i}^{I_{V},T}  \right) I_{V}(t_{j})\right], \\\\
		\displaystyle	B_{S_{H}}&=& 	\displaystyle \frac{b^{\alpha}}{N_{H}} \beta_{V}^{H} \left( \theta \left\lbrace\omega_{i,i}^{I_{V},R}  + e^{- \mu_{V}h}\tilde{ \omega}_{i,i}^{I_{V},R} \right\rbrace+\frac{(1-\theta)}{h}\left\lbrace  \tilde{\omega}_{i,i}^{I_{V},T} -e^{-\mu_{V}h} \tilde{U}_{i,i}^{I_{V},T} \right\rbrace\right),  \\\\
		\displaystyle	C_{S_{H}}&=& 	\displaystyle-S_{H}^{i,\theta}-h \mu_{H} N_{H}.
	\end{array}
\end{eqnarray}

\begin{eqnarray}
	\begin{array}{llll}
		\displaystyle	A_{I_{H}} &=& \displaystyle1+h\mu_{H}-A_{S_{H}},\\\\
		\displaystyle	B_{I_{H}}&=& \displaystyle 1+h\mu_{H}+\frac{1}{C^{\beta}} \left( \theta \left\lbrace\omega_{i,i}^{I_{H},R}  + e^{-\mu_{H}h}\tilde{ \omega}_{i,i}^{I_{H},R} \right\rbrace+\frac{(1-\theta)}{h} \left( \tilde{\omega}_{i,i}^{I_{H},T} -e^{- \mu_{H}h} \tilde{U}_{i,i}^{I_{H},T} \right)\right), \\\\
		\displaystyle	C_{I_{H}}&=& \displaystyle -B_{S_{H}}, \\\\
		\displaystyle	D_{I_{H}}&=& \displaystyle -I_{H}^{i,\theta}+\frac{1}{C^{\beta}}\left[ \sum_{j=0}^{i-1} \left( \theta \left\lbrace\omega_{j,i}^{I_{H},R}  + e^{-\mu_{H}h}\tilde{ \omega}_{j,i}^{I_{H},R} \right\rbrace+\frac{(1-\theta)}{h} \sum_{j=0}^{i-1}\left( \tilde{\omega}_{j,i}^{I_{H},T} -e^{- \mu_{H}h} \tilde{U}_{j,i}^{I_{H},T} \right)\right)  I_{H}(t_{j+1}) \right. \\ && \displaystyle\left. \quad+\frac{(1-\theta)}{h}\sum_{j=0}^{i} \left( \omega_{j,i}^{I_{H},T} -e^{- \mu_{H} h} U_{j,i}^{I_{H},T}  \right) I_{H}(t_{j})\right],\\\\
		\displaystyle	A_{R_{H}} &=&  \displaystyle 1+h \mu_{H},\\\\
		\displaystyle	B_{R_{H}}&=& \displaystyle1+h \mu_{H}-B_{I_{H}}, \\\\
		\displaystyle	C_{R_{H}}&=& \displaystyle -R_{H}^{i,\theta}-I_{H}^{i,\theta}-D_{I_{H}}, \\\\
		\displaystyle	A_{S_{V}} &=& \displaystyle 1+\mu_{V} h\\&& \displaystyle
		+ \frac{b^{\alpha} \beta_{H}^{V}}{N_{H}C^{p}}\left[ \sum_{j=0}^{i-1} \left( \theta \left\lbrace\omega_{j,i}^{I_{H},R}  + e^{-\mu_{H}h}\tilde{ \omega}_{j,i}^{I_{H},R} \right\rbrace+\frac{(1-\theta)}{h} \sum_{j=0}^{i-1}\left( \tilde{\omega}_{j,i}^{I_{H},T} -e^{- \mu_{H}h} \tilde{U}_{j,i}^{I_{H},T} \right)\right)  I_{H}(t_{j+1}) \right. \\&& \displaystyle \left. \quad+\frac{(1-\theta)}{h}\sum_{j=0}^{i} \left( \omega_{j,i}^{I_{H},T} -e^{- \mu_{H} h} U_{j,i}^{I_{H},T}  \right) I_{H}(t_{j})\right],\\\\
		\displaystyle	B_{S_{V}}&=& \displaystyle  \frac{b^{\alpha} \beta_{H}^{V}}{N_{H}C^{p}} \left( \theta \left\lbrace\omega_{i,i}^{I_{H},R}  + e^{- \mu_{H}h}\tilde{ \omega}_{i,i}^{I_{H},R} \right\rbrace+\frac{(1-\theta)}{h}\left\lbrace  \tilde{\omega}_{i,i}^{I_{H},T} -e^{- \mu_{H}h} \tilde{U}_{i,i}^{I_{H},T} \right\rbrace\right),\\\\
		\displaystyle	C_{S_{V}}&=& \displaystyle -S_{V}^{i,\theta}-h \Pi_{V},\\\\
		\displaystyle	A_{I_{V}} &=& \displaystyle 1+ h \mu_{V}-A_{S_{V}}\\
		\displaystyle	B_{I_{V}}&=& \displaystyle-B_{S_{V}},\\\\
		\displaystyle	C_{I_{V}}&=& \displaystyle 1+h  \mu_{V},\\\\
		\displaystyle	D_{I_{V}}&=& \displaystyle -I_{V}^{i,\theta}.\\\\
	\end{array}
\end{eqnarray}

\end{document}